\documentclass[11pt]{amsart}

\usepackage[foot]{amsaddr}



\usepackage{amssymb,amsfonts,amsmath,graphicx,verbatim,eufrak,amsthm,calc}
\usepackage{color}
\usepackage{xcolor}
\usepackage{newfloat}
\usepackage{hyperref}
\usepackage{faktor}

\usepackage{tikz}
\hypersetup{
hidelinks,backref=true,pagebackref=true,hyperindex=true,
    colorlinks=true,       
    linkcolor=violet,          
    citecolor=green,        
    filecolor=magenta,      
    urlcolor=cyan           
}

\setlength{\parindent}{0pt} 
\setlength{\parskip}{10pt} 
\addtolength{\textwidth}{2cm} %
\addtolength{\hoffset}{-1cm}  %


\colorlet{genial}{black} 
\colorlet{genialsol}{black}

\makeatletter

\newtheoremstyle{genialnumbox}
{7pt}
{7pt}
{\normalfont}
{}
{\small\bf\sffamily\color{genial}}
{\;}
{0.25em}
{%
{\small\sffamily\color{genial}\thmname{#1}}%
{\nobreakspace\thmnumber{\@ifnotempty{#1}{}\@upn{#2}}}
\thmnote{{\nobreakspace\the\thm@notefont\sffamily\bfseries\color{black}\nobreakspace(#3)}} 
}

\newtheoremstyle{blacknumex}
{7pt}
{7pt}
{\normalfont}
{} 
{\small\bf\sffamily}
{\;}
{0.25em}
{%
{\small\sffamily\color{genial}\thmname{#1}}%
{\nobreakspace\thmnumber{\@ifnotempty{#1}{}\@upn{#2}}}
\thmnote{{\nobreakspace\the\thm@notefont\sffamily\bfseries\color{black}\nobreakspace(#3)}} 
}

\newtheoremstyle{blacknumbox} 
{7pt}
{7pt}
{\normalfont}
{}
{\small\bf\sffamily}
{\;}
{0.25em}
{%
{\small\sffamily\color{genial}\thmname{#1}}%
{\nobreakspace\thmnumber{\@ifnotempty{#1}{}\@upn{#2}}}
\thmnote{{\nobreakspace\the\thm@notefont\sffamily\bfseries\color{black}\nobreakspace(#3)}} 
}

\newtheoremstyle{genialnum}
{7pt}
{7pt}
{\normalfont}
{}
{\small\bf\sffamily\color{genial}}
{\;}
{0.25em}
{%
{\small\sffamily\color{genial}\thmname{#1}}%
{\nobreakspace\thmnumber{\@ifnotempty{#1}{}\@upn{#2}}}
\thmnote{{\nobreakspace\the\thm@notefont\sffamily\bfseries\color{black}\nobreakspace(#3)}} 
}

\makeatother


\RequirePackage[framemethod=default]{mdframed} 

\newmdenv[skipabove=7pt,
skipbelow=7pt,
rightline=false,
leftline=false,
topline=false,
bottomline=false,
backgroundcolor=black!5,
linecolor=genial,
innerleftmargin=5pt,
innerrightmargin=5pt,
innertopmargin=10pt,
leftmargin=0cm,
rightmargin=0cm,
innerbottommargin=10pt]{tBox}

\newmdenv[skipabove=7pt,
skipbelow=7pt,
rightline=false,
leftline=false,
topline=false,
bottomline=false,
backgroundcolor=genial!10,
linecolor=genial,
innerleftmargin=5pt,
innerrightmargin=5pt,
innertopmargin=5pt,
innerbottommargin=5pt,
leftmargin=0cm,
rightmargin=0cm,
linewidth=4pt]{eBox}	

\newmdenv[skipabove=7pt,
skipbelow=7pt,
rightline=false,
leftline=true,
topline=false,
bottomline=false,
linecolor=genial!50,
innerleftmargin=5pt,
innerrightmargin=5pt,
innertopmargin=5pt,
leftmargin=0cm,
rightmargin=0cm,
linewidth=4pt,
innerbottommargin=5pt]{dBox}	

\newmdenv[skipabove=7pt,
skipbelow=7pt,
rightline=false,
leftline=false,
topline=false,
bottomline=false,
linecolor=gray,
backgroundcolor=black!5,
innerleftmargin=5pt,
innerrightmargin=5pt,
innertopmargin=5pt,
leftmargin=0cm,
rightmargin=0cm,
linewidth=4pt,
innerbottommargin=5pt]{cBox}

\newmdenv[skipabove=7pt,
skipbelow=7pt,
rightline=false,
leftline=false,
topline=false,
bottomline=false,
linecolor=gray,
backgroundcolor=black!5,
innerleftmargin=5pt,
innerrightmargin=5pt,
innertopmargin=5pt,
leftmargin=0cm,
rightmargin=0cm,
linewidth=4pt,
innerbottommargin=5pt]{pBox}

\newmdenv[skipabove=7pt,
skipbelow=7pt,
rightline=false,
leftline=false,
topline=false,
bottomline=false,
linecolor=genialsol,
innerleftmargin=5pt,
innerrightmargin=5pt,
innertopmargin=0pt,
leftmargin=0cm,
rightmargin=0cm,
linewidth=4pt,
innerbottommargin=0pt]{solBox}	


\theoremstyle{genialnumbox}
\newtheorem{thm1}{Theorem}[section]
\newtheorem{ithm1}[thm1]{$\star$ THEOREM}
\newtheorem{ques1}[thm1]{Question}
\newtheorem{conj1}[thm1]{Conjecture}

\theoremstyle{blacknumex}
\newtheorem{exer}[thm1]{Exercise}
\newtheorem{exer*}[thm1]{$\ast$ Exercise}

\theoremstyle{blacknumbox}
\newtheorem{dfn1}[thm1]{Definition}

\theoremstyle{genialnum}
\newtheorem{cor1}[thm1]{Corollary}
\newtheorem{prop1}[thm1]{Proposition}
\newtheorem{lem1}[thm1]{Lemma}

\newtheorem{exm1}[thm1]{Example}


\newenvironment{theorem}{\paragraph{ } \begin{tBox}\begin{thm1}}{\end{thm1}\end{tBox}}

\newenvironment{exe*}{\paragraph{ } \begin{eBox}\begin{exer*}}{\hfill{\color{genial}
\ensuremath{\diamond\diamond\diamond}}\end{exer*}\end{eBox}}
\newenvironment{definition}{\paragraph{ } \begin{dBox}\begin{dfn1}}{\end{dfn1}\end{dBox}}	
\newenvironment{example}{\paragraph{ } \begin{exm1}}{\hfill{\tiny%
\ensuremath{\bigtriangleup\bigtriangledown\bigtriangleup}}\end{exm1}}
\newenvironment{corollary}{\paragraph{ } \begin{cBox}\begin{cor1}}{\end{cor1}\end{cBox}}	
	
\newenvironment{conj}{\paragraph{ } \begin{cBox}\begin{conj1}}{\end{conj1}\end{cBox}}	

\newenvironment{proposition}{\paragraph{ } \begin{pBox}\begin{prop1}}{\end{prop1}\end{pBox}}	
\newenvironment{lemma}{\paragraph{ } \begin{pBox}\begin{lem1}}{\end{lem1}\end{pBox}}

\newenvironment{lem*}[1]{\vspace{1ex}\noindent
{\bf Lemma* (#1).} [restatement]  \hspace{0.5em} \em }{ }

\newenvironment{thm*}[1]{\begin{cBox}
\vspace{1ex}\noindent 
{\bf Theorem* (#1).} [restatement]  \hspace{0.5em} }{\end{cBox}}

\theoremstyle{genialnum}

\newtheorem*{clm*}{Claim}

\newenvironment{sol}%
{\begin{solBox}
\par \noindent 
\scriptsize
{\bf Solution to ex:{\color{blue} \arabic{exer}}.}  {\color{red} \ \  :( } \\ }%
{\hfill {\color{blue} :) $\checkmark$} \end{solBox}}

\newcommand{\ENDEXER}{
{\expandafter\comment}
{\expandafter\endcomment}
}


\newtheorem{remark}[thm1]{Remark}



\makeatletter
\renewcommand{\@seccntformat}[1]{\llap{\textcolor{genial}{\csname the#1\endcsname}\hspace{1em}}}                    
\renewcommand{\section}{\@startsection{section}{1}{\z@}
{-4ex \@plus -1ex \@minus -.4ex}
{1ex \@plus.2ex }
{\normalfont\large\sffamily\bfseries}}
\renewcommand{\subsection}{\@startsection {subsection}{2}{\z@}
{-3ex \@plus -0.1ex \@minus -.4ex}
{0.5ex \@plus.2ex }
{\normalfont\sffamily\bfseries}}
\renewcommand{\subsubsection}{\@startsection {subsubsection}{3}{\z@}
{-2ex \@plus -0.1ex \@minus -.2ex}
{.2ex \@plus.2ex }
{\normalfont\small\sffamily\bfseries}}                        
\renewcommand\paragraph{\@startsection{paragraph}{4}{\z@}
{-2ex \@plus-.2ex \@minus .2ex}
{.1ex}
{\normalfont\small\sffamily\bfseries}}



\makeatother



\newcommand{\IP}[1]{\left\langle #1 \right\rangle}

\newcommand{\set}[1]{\left\{#1\right\}}

\newcommand{\Integer}{\mathbb{Z}}

\newcommand{\Z}{\Integer}
\newcommand{\N}{\mathbb{N}}

\newcommand{\R}{\mathbb{R}}

\newcommand{\eps}{\varepsilon}

\newcommand{\ie}{{\em i.e.\ }}
\newcommand{\eg}{{\em e.g.\ }}




\DeclareMathOperator{\E}{\mathbb{E}}     

\renewcommand{\Pr}{}
\let\Pr\relax
\DeclareMathOperator{\Pr}{\mathbb{P}}

\newcommand{\1}[1]{\mathbf{1}_{\set{ #1 } }}

\def\squareforqed{\hbox{\rlap{$\sqcap$}$\sqcup$}}
\def\qed{\ifmmode\squareforqed\else{\unskip\nobreak\hfil
\penalty50\hskip1em\null\nobreak\hfil\squareforqed
\parfillskip=0pt\finalhyphendemerits=0\endgraf}\fi}


\newcommand{\ignore}[1]{ }


\newcommand{\dist}{\mathrm{dist}}
\newcommand{\vphi}{\varphi}

\newcommand{\Ff}{\mathcal{F}}

\newcommand{\Ee}{\mathcal{E}}

\newcommand{\Tt}{\mathcal{T}}

\newcommand{\F}{\mathbb{F}}

\newcommand{\Mm}{\mathcal{M}}

\newcommand{\define}[1]{\textbf{#1}}

\newcommand{\gr}{\mathsf{g}}

\newcommand{\rad}{\mathsf{rad}}


\newcommand{\stab}{\mathsf{stab}}

\newcommand{\pref}{\mathsf{pref}}
\newcommand{\core}{\mathsf{Core}}

\newcommand{\SL}{\mathsf{SL}}

\newcommand{\Nn}{\mathcal{N}}
\newcommand{\Ii}{\mathcal{I}}

\newcommand{\Sub}{\mathsf{Sub}}
\newcommand{\Sch}{\mathsf{Sch}}


\title{Full realization of ergodic IRS entropy in $\SL_2(\Z)$ and free groups}
\author{Liran Ron-George}
\author{Ariel Yadin}
\address{Department of Mathematics, Ben-Gurion University of the Negev}
\email{ \{lirar, yadina\}@bgu.ac.il }
\thanks{We thank Y.\ Glasner, Y.\ Hartman, T. Meyerovitch for useful discussions. 
Research partially supported by 
the Israel Science Foundation, grant no.\ 954/21.
The first author 
also partially supported by the Israel Science Foundation, 
grant no.\ 1175/18.}

\begin{document}

\maketitle

\begin{abstract}
We show that any {\em a-priori} possible entropy value is realized by an ergodic IRS, in free groups
and in $\SL_2(\Z)$.  This is in stark contrast to what may happen in $\SL_n(\Z)$ for $n \geq 3$,
where only the trivial entropy values can be realized by ergodic IRSs.
\end{abstract}

\section{Introduction}

\subsection{Normal subgroups and IRSs}

Margulis' celebrated {\em Normal Subgroup Theorem} \cite{margulis} states that 
for any irreducible lattice in a semi-simple real Lie group of real rank at least $2$,
all normal subgroups are either finite or of finite index.
The go-to example here is the lattice $\SL_n(\Z)$ in the Lie group $\SL_n(\R)$,
for $n \geq 3$.  
In a way, the theorem says that for $n \geq 3$, the group $\SL_n(\Z)$ 
doesn't have an abundance of normal subgroups.

This phenomenon is now known to be much more general.
For example, one may consider the possible IRSs in such lattices:
Let $G$ be a group.  $G$ acts on the space of subgroups of $G$ by conjugation.
An \define{invariant random subgroup}, or \define{IRS} (coined in \cite{AGV14}), 
is a probability measure on the subgroups of $G$ that is invariant under the induced action.
One example is just $\delta$-measure on a normal subgroup.  So IRSs generalize normal subgroups.
Convex combinations of IRSs are also IRSs, so when considering whether 
a group has an abundance of IRSs or not, it is natural to consider only the 
extreme points in the convex set of IRSs.  These are known as the \define{ergodic IRSs}.
Margulis' Normal Subgroup Theorem can be extended to IRSs in certain lattices,
such as $\SL_n(\Z)$ for $n \geq 3$.
See \cite{Charm, StuckZimmer} for more details.

A natural question that arises is what happens in rank-$1$ lattices, and specifically what happens 
in $\SL_2(\Z)$ (which does not usually behave like a high-rank lattice).
Does this group admit ``many'' normal subgroups, in opposition to  the high-rank case? 
Or, perhaps, ``many'' (ergodic) IRSs?
One way to make this question precise is via the notion of \define{random walk entropy}.

\subsection{Random walk entropy}

By a \define{random walk} on a group $G$, we mean the process $X_t = U_1 U_2 \cdots U_t$, 
where $(U_t)_{t=1}^\infty$ are independent group elements, all with some fixed law $\mu$.
For suitable random walks, \eg if the Shanon entropy of $U_1$ is finite,  
one may define the \define{random walk entropy} 
$h(G,\mu) = \lim_{t \to \infty} \frac{H(X_t) }{ t}$, where $H(X_t)$ is the 
Shannon entropy of the (law of the) random variable $X_t$.
(See below, Section \ref{scn:entropy}, and \cite{CoverThomas} for more on Shannon entropy.)
This entropy has deep connections to the boundary theory, see \eg \cite{KV83} and references therein. 
In a certain sense, random walk entropy is a measure of the amount of information the first step of the 
random walk provides on the behavior at infinity.  
(Even more loosely, how much does the walk at infinity ``remember'' its first step.)

If we fix a subgroup $K \leq G$, we may wish to observe the random walk only on the cosets of $H$,
\ie the process $(K X_t)_t$.  In this case, the limit of $\frac{ H( KX_t) }{ t}$ is no longer guarantied to exist.
However, there is a special case for which such a sequence does converge: when $K$ is an IRS.
The limit in this case exists by the sub-additive ergodic theorem, and is a.s.\ equal to its expectation.
So for an IRS $\lambda$ one can define the entropy 
$$ h(G,\mu,\lambda) =  \lim_{t \to \infty}  \int  \frac{ H(K X_t) }{ t} d \lambda(K) . $$
(This quantity is actually the {\em Furstenberg entropy} of some $\mu$-stationary $G$-action,
see \cite{Bowen10, HT15, HY18}.)
In the case $\lambda = \delta_N$ for some normal subgroup $N \lhd G$, the notions coincide:
$h(G/N , \bar \mu) = h(G,\mu, \delta_N)$, where $\bar \mu$ is the push-forward of $\mu$
onto the quotient group $G/N$.

We use this notion of random walk entropy to precisely define what we mean by 
``an abundance of normal subgroups / IRSs''.

\begin{definition} \label{dfn:N I}
Let $G$ be a group, and let $\mu$ be a random walk on $G$ of finite entropy.
Define the following subsets of $[0,\infty)$,
\begin{align*}
\Nn (G,\mu) & = \{ h(G/N, \bar \mu) \ : \ N \lhd G \ , \ \textrm{ $\bar \mu$ is the push-forward of $\mu$ } \} \\
\Ii (G,\mu) & = \{ h(G, \mu, \lambda) \ : \ \lambda \textrm{ is an ergodic IRS on $G$ }  \}
\end{align*}
We call $\Nn(G,\mu)$ the \define{normal spectrum} and $\Ii(G,\mu)$ the \define{IRS spectrum}.
\end{definition}

\begin{remark}
Let us stress again that we only consider {\em ergodic} IRS in the definition of $\Ii(G,\mu)$.
Otherwise, it would be very easy to just obtain the whole interval $[0, h(G,\mu)]$ by taking 
convex combinations of $\delta$-measures on the trivial subgroup and the whole group.
\end{remark}

Since $K x$ is a function of $x$, basic properties of Shanon entropy 
imply that $h(G,\mu,\lambda) \leq h(G,\mu)$.
So we have the inclusions
$$ \{ 0, h(G,\mu) \} \subset \Nn(G ,\mu) \subset \Ii(G,\mu) \subset [0, h(G,\mu) ] . $$
Thus, we say that $(G,\mu)$ has:
\begin{itemize}
\item \define{simple normal spectrum} if $\{ 0, h(G,\mu) \} = \Nn(G ,\mu)$,
\item \define{simple IRS spectrum} if $\{ 0, h(G,\mu) \} = \Ii(G ,\mu)$,
\item \define{full IRS spectrum} if $[ 0, h(G,\mu) ] = \Ii(G ,\mu)$,
\item \define{full normal spectrum} if $[ 0, h(G,\mu) ] = \Nn(G ,\mu)$.
\end{itemize}

With this definition, Margulis' Normal Subgroup Theorem implies that when $n \geq 3$,
for any $\mu$ (\eg uniform measure on a finite symmetric generating set),
$\SL_n(\Z)$ has a simple normal spectrum.
In fact, the results of \cite{StuckZimmer} extend this to IRSs: for $n \geq 3$,
$\SL_n(\Z)$ has a simple IRS spectrum.

Using this framework, one immediately arrives at the question of the structure of the sets
$\Nn, \Ii$ for the group $\SL_2(\Z)$.
Are they small as in the high-rank case, or is there some opposite behavior?

\begin{conj} \label{conj:SL2}
Let $G$ be a finitely generated group that contains a free group with finite index.
Let $\mu$ be a finitely supported symmetric probability measure on $G$.
Then $(G,\mu)$ has full normal spectrum.
\end{conj}

One example of a virtually free finitely generated group is $\SL_2(\Z)$,
which contains the free group  on $d \geq 2$ generators, $\F_d$, 
as a subgroup of finite index.
Although we cannot prove Conjecture \ref{conj:SL2}, 
it motivates trying to understand the IRS spectrum of 
random walks on $\SL_2(\Z)$ and on free groups.
Indeed this was first done by Bowen \cite{Bowen10}, 
where it was shown that for the {\em simple random walk}
on the free group $\F_d$, the IRS spectrum is full.
This was later extended in \cite{HY18} to all symmetric finitely-supported random walks on $\F_d$.

It should be noted that the above results for free groups were not sufficient even for specific random walks 
on $\SL_2(\Z)$.  The main reason is perhaps as follows: If we start with some random walk $\mu$
on a group $G$, it canonically 
induces another random walk $\mu_F$ on a finite-index subgroup $F \leq G , [G:F] < \infty$
(which is known as the {\em hitting measure}, see below).
However, when moving from $\mu$ to $\mu_F$, some properties are lost, and most importantly,
even if one starts with a finitely supported $\mu$, the resulting induced random walk $\mu_F$ 
is no longer guarantied to be finitely supported.

Thus, in order to understand the IRS spectrum of $(G,\mu)$ using $(F,\mu_F)$, 
when $F \leq G$ is of finite index, two steps are required:
{\bf Step I} is to be able to tackle infinitely supported random walks on the subgroup $F$.
For {\bf Step II} there needs to be some procedure in the construction that enables a ``lifting'' of the IRSs from 
the subgroup $F$ to IRSs on the original group $G$, and this needs to be done in such a way
that there is a relation between the corresponding random walk entropies.

Let us stress here that we are restricted to entropies arising from IRSs, so that Step II is
different than the one considered in \cite{HT15} (in that paper they considered entropy arising from 
stationary actions, which is a broader category than IRSs).
Also, our ``lifting'' is different from known procedures, such as {\em co-induction} in \cite{KQ19},
as we also control how the entropy changes in order to obtain full IRS spectrum.
In other co-induction methods we do not see a straightforward way to control the entropy.
These latter methods can provide an interval of IRS entropies near $0$, as in \cite{HT15},
but the full spectrum seems out of reach for those methods.

Thus, our main contributions can be summarized as follows:
We explain and prove how the construction introduced in \cite{HY18}, 
lends itself to naturally extending the {\em intersectional IRSs} from finite-index 
subgroups to the mother group. 
This takes car of Step II mentioned above. 
For Step I, we need to extend the construction from \cite{HY18}, so that it actually holds 
for infinitely supported random walks on free groups, which is our second main result.
Together, these culminate in the following theorems.

\begin{theorem} \label{thm:IRS free group}
Let $\mu$ be an adapted symmetric random walk with finite second moment on the free group $\F_d$
over $d \geq 2$ generators.
Then $(\F_d,\mu)$ has full IRS spectrum.
\end{theorem}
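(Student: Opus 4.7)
The plan is to extend the intersectional IRS construction of Hartman--Yadin \cite{HY18} from finitely supported random walks to random walks having only a finite fourth moment. Concretely, for each parameter $p \in [0,1]$ I would build an ergodic IRS $\lambda_p$ on $\F_d$ and show that $p \mapsto h(\F_d,\mu,\lambda_p)$ is a continuous function whose image, together with the trivial IRSs $\delta_{\F_d}$ and $\delta_{\{e\}}$, covers the whole interval $[0, h(\F_d,\mu)]$ by the intermediate value theorem.

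For the construction, fix a finite-index subgroup $H \le \F_d$ and attach to each element $g \in \F_d$ an independent Bernoulli$(p)$ mark $\xi_g$, then set
\[ K_p \;=\; \bigcap_{g \in \F_d \,:\, \xi_g = 1} g H g^{-1}. \]
Left multiplication on $\F_d$ permutes the marks preserving their joint law, so the law of $K_p$ is $\F_d$-invariant and defines an IRS; the i.i.d.\ Bernoulli field yields ergodicity via Kolmogorov's $0$--$1$ law. At $p=0$ one has $K_0 = \F_d$ (entropy $0$), and at $p=1$ one has $K_1 = \core(H)$, whose quotient walk has a specific entropy depending on $H$. Using residual finiteness of $\F_d$ to choose a sequence of $H$'s whose normal cores give quotient entropies tending to $h(\F_d,\mu)$, together with $\delta_{\{e\}}$ attaining the top value, the union of the resulting intervals covers $[0,h(\F_d,\mu)]$.

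The technical heart is the continuity of $p \mapsto h(\F_d,\mu,\lambda_p)$. For finitely supported $\mu$ (the setting of \cite{HY18}), the walk $X_t$ remains in a ball of radius $Ct$, so $H(K_p X_t)$ depends on only finitely many marks in a bounded region, and continuity in $p$ follows from continuous dependence of finite-dimensional product measures on the Bernoulli parameter. For infinitely supported $\mu$ a single step can reach arbitrarily far, and one cannot localize. I would truncate $\mu = \mu_{\le R} + \mu_{>R}$ at word length $R$, apply the local argument to $\mu_{\le R}$, and control the error through a coupling of the two walks that agrees until the first long step. The finite fourth moment supplies tail bounds of the form $\Pr[|U| > R] \le \E[|U|^4]/R^4$, and through a Fano/data-processing inequality this converts into a uniform-in-$p$ bound on $|h(\F_d,\mu,\lambda_p) - h(\F_d,\mu_{\le R},\lambda_p)|$ which vanishes as $R \to \infty$.

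The principal obstacle I anticipate is exchanging the limits $t \to \infty$ (in the definition of $h$) with $R \to \infty$ (the truncation scale), namely establishing error estimates that are uniform in both $p$ and $t$. The fourth moment enters here in an essential way: while a second-moment bound controls the typical displacement of $X_t$, bounding the entropy deficit $H(X_t) - H(K_p X_t)$ requires controlling the number of distinct $K_p$-cosets visited along the trajectory, which is a more delicate $\ell^4$-type functional of the step increments. Once the uniform continuity modulus is secured, the endpoint identifications and the intermediate value theorem close the argument routinely.
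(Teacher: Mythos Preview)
Your proposal has a fatal structural gap. You take $H \leq \F_d$ of \emph{finite index}. Then $\core(H) = \bigcap_{g} gHg^{-1}$ is also of finite index in $\F_d$, so $\F_d / \core(H)$ is a finite group and $h(\F_d/\core(H), \bar\mu) = 0$. Thus your family $\lambda_p$ has entropy $0$ at \emph{both} endpoints $p=0$ and $p=1$, and the intermediate value theorem gives you nothing. Appealing to residual finiteness makes this worse, not better: the paper's Remark after Lemma~\ref{lem:main lem} notes exactly this point --- one can have $C_n \searrow \{1\}$ with all $C_n$ of finite index, yet $h(\F_d/C_n,\bar\mu) \equiv 0$. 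To get entropies approaching $h(\F_d,\mu)$ one needs infinite-index subgroups $K_n$ chosen so that $\core_{\F_d}(K_n)$ is ``small'' in a quantitative sense (the paper requires $\rad(\core_{\F_d}(K_n)) \to \infty$ together with transience), and this is engineered via an explicit Schreier-graph gluing (Lemma~\ref{lem:gluing}), not by residual finiteness.

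You have also misidentified where the difficulty, and hence the fourth moment, enters. Continuity of $p \mapsto h(\F_d,\mu,\lambda_{p,K})$ is already known for general $\mu$ from \cite{HY18} (Proposition~\ref{prop:cont entropy} here) provided $h(\F_d/\core_\emptyset(K),\bar\mu)=0$; this is arranged by making $\F_d/\core_\emptyset(K)$ nilpotent, and needs no truncation of $\mu$. The genuine obstacle for infinitely supported $\mu$ is proving that $h(\F_d/\core_{\F_d}(K_n),\bar\mu) \to h(\F_d,\mu)$. The paper handles this (Lemma~\ref{lem:main lem}) by comparing tail $\sigma$-algebras via the prefix $\pref_r$, and the key estimate is a Green-function bound on how often the walk can change its $r$-prefix after exiting a large ball (Lemma~\ref{lem:Stankov}, Corollary~\ref{cor:change prefix}). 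The fourth moment appears precisely because one must sum $\mu(u)\,|u| \cdot (r+|u|)^3$ over $u$, the cubic factor coming from the $O(r^3)$ bound on expected visits to a radius-$r$ ball (Lemma~\ref{lem:speed}); it has nothing to do with an ``$\ell^4$-type functional of step increments'' or coupling to a truncated walk.
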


\begin{theorem} \label{thm:IRS SL2}
Let $G$ be a finitely generated group that contains a copy of $\F_d, d \geq 2$ with finite index.
Let $\mu$ be a symmetric random walk with finite second moment on $G$.

Then, $(G,\mu)$ has full IRS spectrum.

Specifically, this holds for $G = \SL_2(\Z)$.
\end{theorem}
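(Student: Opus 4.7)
The plan is to reduce Theorem \ref{thm:IRS SL2} to Theorem \ref{thm:IRS free group} via the hitting-measure technique, executing the two-step strategy outlined in the introduction. Fix a copy $F \leq G$ of $\F_d$ with $[G:F] < \infty$. Given the $\mu$-random walk $(X_t)$ on $G$, let $\tau_1 < \tau_2 < \cdots$ be the successive return times of $(X_t)$ to $F$, and let $\mu_F$ be the law of $X_{\tau_1}$ (the hitting measure). The increments $X_{\tau_k}^{-1} X_{\tau_{k+1}}$ form an i.i.d.\ $\mu_F$-walk on $F$.

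First, I would verify that $\mu_F$ satisfies the hypotheses of Theorem \ref{thm:IRS free group}: symmetry transfers from $\mu$ since the return-time walk inherits the time-reversal symmetry; adaptedness on $\F_d$ follows from adaptedness of $\mu$ on $G$ together with $[G:F] < \infty$. The nontrivial point is the finite fourth moment of $\mu_F$. Since $G$ is virtually free, hence word-hyperbolic, the return time $\tau_1$ has exponential tails, and a Wald-type estimate combined with the finite fourth moment of $\mu$ yields $\E[|X_{\tau_1}|_F^4] < \infty$, where $|\cdot|_F$ is any word metric on $F$ (equivalent up to constants to the restriction of a word metric on $G$).

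Second, I would invoke the classical Kaimanovich-type identity $h(F,\mu_F) = \E[\tau_1] \cdot h(G,\mu)$, which sets the normalization for transferring the IRS spectrum. Now apply Theorem \ref{thm:IRS free group} to $(F,\mu_F)$: for every $s \in [0, h(F,\mu_F)]$ there exists an ergodic IRS $\lambda_F$ on $F$ with $h(F,\mu_F,\lambda_F) = s$, produced by the intersectional construction of \cite{HY18}.

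Third, and this is the heart of the argument (Step II of the introduction), I would lift $\lambda_F$ to an ergodic IRS $\lambda_G$ on $G$. The intersectional IRSs of \cite{HY18} are defined by a coset-graph intersection pattern that extends coherently under induction; concretely, one takes $\lambda_G$ to be the distribution of $\bigcap_{g \in T} g K g^{-1}$, where $K \sim \lambda_F$ and $T$ is a transversal of $F$ in $G$ (or its random analogue), and then checks $G$-conjugation invariance using that $F \lhd_{\mathrm{f.i.}} \mathrm{Core}(F)$. The target identity is
\[
 h(G,\mu,\lambda_G) = \frac{h(F,\mu_F,\lambda_F)}{\E[\tau_1]} ,
\]
so that as $s$ sweeps $[0,h(F,\mu_F)]$ the entropy $h(G,\mu,\lambda_G)$ sweeps $[0,h(G,\mu)]$, proving full IRS spectrum.

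The main obstacle is the third step: one must show both that the lifted measure $\lambda_G$ is genuinely \emph{ergodic} (not merely invariant) and that the entropy divides cleanly by $\E[\tau_1]$. The ergodicity issue is delicate because passing from $F$-conjugation to $G$-conjugation enlarges the acting group, and the intersection over a transversal could in principle create redundancies; here one must exploit the rigidity of the intersectional construction to argue that extremality is preserved. The entropy identity requires a Markov renewal argument tracking the cosets $\lambda_G X_t$ during excursions outside $F$ and verifying, via the sub-additive ergodic theorem and conditional-entropy decomposition along the stopping times $\tau_k$, that entropy accumulates only at the return epochs, giving exactly the factor $1/\E[\tau_1]$. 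The finite fourth moment of $\mu_F$ secured in the first step is what makes the entropy computation along $(\tau_k)$ converge in the required sense.
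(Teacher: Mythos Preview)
Your first two steps are essentially what the paper does (Proposition \ref{prop:hitting measure} covers the moment transfer, and $\E[\tau_1]=[G:F]$ gives the Abramov normalization). The gap is in your third step, and you have correctly flagged it yourself: you need an Abramov formula $h(G,\mu,\lambda_G)=h(F,\mu_F,\lambda_F)/[G:F]$ for a \emph{lifted} IRS, together with ergodicity of the lift. Neither is established, and an Abramov identity for general IRSs is not available in the paper (Proposition \ref{prop:abramov} is only stated and proved for \emph{normal} subgroups $N\lhd G$). Your proposed lift ``intersect over a transversal'' is not the paper's construction and you give no argument for either ergodicity or the entropy identity.

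The paper sidesteps both difficulties. Rather than lifting an IRS from $F$ to $G$, it takes the very same subgroups $K_n\leq F$ produced by Theorem \ref{thm:free groups} and runs the intersectional construction \emph{directly in $G$}, obtaining the family $p\mapsto\lambda_{p,K_n}^G$. The point is that only the two endpoints $p=0,1$ need to be compared with $F$, and these endpoints are $\delta$-measures on \emph{normal} subgroups $\core_\emptyset^G(K_n)$ and $\core_G(K_n)$. Lemma \ref{lem:nilpotent for empty core} shows $G/\core_\emptyset^G(K_n)$ is virtually nilpotent (from $F/\core_\emptyset^F(K_n)$ nilpotent), so Choquet--Deny gives entropy $0$ at $p=0$ and Proposition \ref{prop:cont entropy} gives continuity of $p\mapsto h(G,\mu,\lambda_{p,K_n}^G)$ on $[0,1]$. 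Lemma \ref{lem:entropy for core} uses $\core_G(K_n)\subset\core_F(K_n)$ together with the Abramov formula for the normal subgroup $\core_F(K_n)$ to bound $h(G/\core_G(K_n),\bar\mu)\geq[G:F]\cdot h(F/\core_F(K_n),\overline{\mu_F})\to[G:F]\cdot h(F,\mu_F)=h(G,\mu)$. Thus the interval $[0,h(G,\mu)]$ is filled by ergodic intersectional IRSs constructed in $G$ from the start; no IRS is ever lifted, and Abramov is only invoked for normal subgroups.
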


The proof of these theorems is a combination of Theorem \ref{thm:free groups}
and Section \ref{scn:finite index}.

%

\begin{remark}
One may wonder as to the optimality of the restriction on the number of moments for $\mu$
in the above theorems.  
There are examples of random walks on the same group which are recurrent and transient
when the random walks are not restricted to a finite second moment (\eg on $\Z^2$).
Also, without the second moment restriction, 
one can have two random walks on the same group 
with both positive random walk entropy and with $0$ random walk entropy 
(for example on lamp-lighter groups over $\Z^2$, see \eg \cite{GaborBook, Woess}).
So it seems that $2$ moments is a somewhat essential assumption for preserving random walk properties.

Having said that, 
we do not know currently what happens when the finite second moment assumption is removed 
in Theorem \ref{thm:IRS SL2}.
\end{remark}

\begin{remark}
Regarding positive results for the normal spectrum in the free group,
let us mention \cite{TZ19}, where it is shown that there are ``many'' normal subgroups
with different random walk entropies.  See \cite[Theorem 1.1]{TZ19} for details.
To our knowledge this is the state of the art regarding the normal spectrum on free groups,
and it is open, for example, whether $\Nn(\F_d,\mu)$ contains an interval.
\end{remark}

\section{Notation and precise statement of results}

\subsection{Random walks}

All groups considered in this paper will be countable, so for a group $G$ we denote  
a probability measure $\mu$ on $G$ by a non-negative function $\mu : G \to [0,1]$,
such that $\sum_x \mu(x) = 1$.
Given a probability measure $\mu$ on $G$, the \define{$\mu$-random walk} on $G$
is the process $(X_t)_t$ where $X_{t+1} = X_t U_{t+1}$, and $(U_t)_{t=1}^\infty$ are independent
group elements all with law $\mu$.  Equivalently, $(X_t)_t$ is the Markov chain on state space $G$,
with transition matrix given by $P(x,y) = \mu(x^{-1} y)$.
Thus, we will also call a probability measure $\mu$ on $G$ a random walk.

Suppose $G$ is a finitely generated group.
Then, fixing some finite symmetric generating set $|S| < \infty , S=S^{-1} , G = \IP{S}$, 
we have a natural metric on $G$ via the \define{Cayley graph} with respect to $S$.
This is the graph whose vertices are $G$, and edges are given by the relation $x \sim y \iff x^{-1} y \in S$.
$\dist = \dist_{S}$ denotes the graph metric, where we omit $S$ unless we want to specifically emphasize the 
generating set.  If we denote $|x| = |x|_S = \dist(x,1)$, then it is easily seen that $\dist(x,y) = |x^{-1} y|$.

A random walk $\mu$ on $G$ is said to be \define{adapted} if the support of $\mu$ 
generates $G$ (this is the same as the associated Markov chain being {\em irreducible}).
$\mu$ is \define{symmetric} if $\mu(x) = \mu(x^{-1})$ for any $x \in G$
(which guaranties the Markov chain is {\em reversible}).
$\mu$ has finite $k$-th moment if 
$$ \E [ |U_1|^k ] = \sum_x   \mu(x) |x|^k < \infty . $$
The metrics of two Cayley graphs on the same group are bi-Lipschitz with respect to one another, 
so although the precise value of the $k$-th moment $\E [ |U_1|^k ]$ may change, its finiteness
does not depend on the specific choice of Cayley graph.

\subsection{IRS}

Consider the space $\Sub(G)$ of (closed) subgroups of $G$, with the Chabauty topology
(the topology induced by pointwise convergence of functions, recall that we only deal with countable groups). 
Let $\Mm_1(\Sub(G))$ be the space of all Borel probability measures on $\Sub(G)$.
$G$ acts on $\Sub(G)$ by conjugation, this action being continuous.  Thus, $G$ naturally acts 
on $\Mm_1(\Sub(G))$ as well.
An \define{invariant random subgroup}, or \define{IRS}, 
is a Borel probability measure $\lambda \in \Mm_1(\Sub(G))$ 
that is invariant under the $G$ action.
One may think of an IRS as a random subgroup, whose law is invariant under conjugation.
An example is $\delta_N$ for a normal subgroup $N \lhd G$.
In this sense, IRSs generalize normal subgroups.

For more information on IRSs see \cite{AGV14}.

\subsection{Entropy}

\label{scn:entropy}

More details regarding properties of Shanon entropy may be found in \cite{CoverThomas}.

Let $\mu$ be a random walk on $G$.
$\mu$ has \define{finite entropy} if the {\em Shanon entropy} is finite:
$H(\mu) : = -\sum_x \mu(x) \log \mu(x) < \infty$, with the convention that $0 \log 0 = 0$.
If $X$ is a random element with law $\mu$ then $H(X) := H(\mu)$.

If $\mu$ has finite entropy then Fekete's Lemma implies that the following limit exists:
$$ h(G,\mu) = \lim_{t \to \infty} \frac{ H(X_t) }{t} , $$
where $(X_t)_t$ is the $\mu$-random walk (so $H(X_t) = H(\mu^t)$ and $\mu^t$ is $t$ convolutions of $\mu$ 
with itself).
$h(G,\mu)$ is called the \define{random walk entropy}, and has deep connections to the Poisson boundary
and bounded harmonic functions.  See \eg \cite{KV83} and \cite[Chapter 14]{LyonsPeres} and references therein.

Given a normal subgroup $N \lhd G$, one may consider the induced random walk $\bar \mu$ on the 
quotient $G/N: \bar \mu(Nx) = \sum_{y \in Nx} \mu(y)$.
It is simple to see that $h(G/N , \bar \mu) \leq h(G, \mu)$ (this just follows from the fact that 
$H(N X_t) \leq H(X_t)$ because $N x$ is a function of $x$).
Since $G/N$ is also a group, $h(G/N , \bar \mu)$ is well defined.
However, if $K \leq G$ is not a normal subgroup, it is not guarantied that
$(\frac{ H(K X_t) }{ t} )_t$ converges.

Now assume that $K \leq G$ is a random subgroup with law $\lambda$, for some IRS $\lambda$.
Then, by the subadditive ergodic theorem, for $\lambda$-a.e.\  $K$ the limit 
$$ \lim_{t \to \infty} \frac{ H(KX_t) }{ t} $$
exists, and is a.s.\ equal to its expectation
$$ h(G,\mu,\lambda) : = \lim_{t \to \infty} \int \frac{ H(K X_t) }{ t} d \lambda(K) . $$
Again, since $H(KX_t) \leq H(X_t)$ it is immediate that $h(G,\mu,\lambda) \leq h(G,\mu)$.

Note that the set of IRSs is a convex set, and with convex combinations one may obtain convex combinations 
of the different entropy values. However, it is more interesting to understand which possible entropy values 
are possible as $\lambda$ varies over the extreme points of the convex set of IRSs.
Such an extreme point is called an \define{ergodic IRS}.

Recall Definition \ref{dfn:N I}, of the normal spectrum $\Nn$ and the IRS spectrum $\Ii$:
\begin{align*}
\Nn (G,\mu) & = \{ h(G/N, \bar \mu) \ : \ N \lhd G \ , \ \textrm{ $\bar \mu$ is the push-forward of $\mu$ } \} \\
\Ii (G,\mu) & = \{ h(G, \mu, \lambda) \ : \ \lambda \textrm{ is an ergodic IRS on $G$ }  \}
\end{align*}

\subsection{Intersectional IRS}

In this section we describe a certain construction of IRSs from \cite{HY18},
called \define{intersectional IRSs}.

Let $K \leq G$ be a subgroup.
Let $N = N_G(K) = \{ g \in G \ : \ K^g = K \}$ be the {\em normalizer} of $K$ in $G$.
(Here $x^g = g^{-1} x g$ so $K^g = g^{-1} K g$.)
Let $\Theta = \Theta_G(K) = G/N$ be the set of right-cosets of $N$.
Since $K^n = K$ for any $n \in N$, the conjugation $K^{N g} : = K^g$ is well defined,
for any $N g \in \Theta$.

Given a subset $\emptyset \neq A \subset \Theta$, consider the subgroup
$\core_A(K) : = \bigcap_{\theta \in A} K^\theta$.
$G$ acts naturally from the right on $\Theta$, via $(N\gamma) .g = N\gamma g$,
so it acts on subsets of $\Theta$ as well.  It is simple to verify that
the map $A \mapsto \core_A(K)$ from subsets of $\Theta$ to $\Sub(G)$ is $G$-equivariant.
That is, $(\core_A(K) )^g = \core_{A.g} (K)$.

Now, let $A \subset \Theta$ be a random subset, chosen from the product-Bernoulli-$p$ measure;
that is, for some fixed $p \in (0,1)$, 
we have $\theta \in A$ with probability $p$, and all $\theta \in \Theta$ are independent.
Since the law of $A$ is invariant to the $G$-action (\ie $A$ and $A.g$ have the same distribution),
we get that $\core_A(K)$ is an IRS.
In \cite{HY18} it is shown that if $\Theta$ is infinite (equivalently, if $|K^G| = \infty$),
then this IRS is ergodic.  We denote this IRS by $\lambda_{p,K} \in \Mm_1(\Sub(G))$.

We now discuss 
the weak$^*$ limits of $\lambda_{p,K}$ as $p \to 0$ and $p \to 1$.

For $K \leq G$ and $N = N_G(K)$ and $\Theta = \Theta_G(K) = G/N$ as above,
define:
$$ || g ||_K = | \Theta \setminus \Omega_g | 
\qquad \qquad \Omega_g = \{ \theta \in \Theta \ | \ g \in K^\theta \} . $$
It is easily verified that since $K^\theta$ is a group, we have 
$\Omega_g \cap \Omega_\gamma \subset \Omega_{g \gamma}$ and $\Omega_g = \Omega_{g^{-1} }$.
Also, since $g^{-1} K^\theta g = K^{\theta.g}$, we have $\Omega_{g^\gamma} = \Omega_g . \gamma^{-1}$.
Thus,
\begin{itemize}
\item $|| g \gamma ||_K \leq || g ||_K + || \gamma ||_K$
\item $|| g^{-1} ||_K = || g ||_K$
\item $|| g^\gamma ||_K = || g ||_K$,
\end{itemize}
(which justifies the ``norm'' notation).
These properties give rise to two natural normal subgroups of $G$:
\begin{align*}
\core_G(K) & = \{ g \in G \ : \ || g ||_K = 0 \} = \bigcap_{\theta \in \Theta} K^\theta , \\
\core_\emptyset(K) & = \{ g \in G \ : \ || g ||_K < \infty \} .
\end{align*}
$\core_G(K)$ is just the {\em normal core} of $K$ in $G$, \ie the intersection of all conjugates of $K$.
$\core_\emptyset(K)$ is the collection of all elements $g \in G$ the appear in all but finitely many conjugates.
The use of the empty set in the notation $\core_\emptyset(K)$ is justified by the following.

For any $\emptyset \neq A \subset \Theta$, note that $g \in \core_A(K)$ if and only if
$A \subset \Omega_g$.  
So if $A \subset \Theta$ is chosen from the product-Bernoulli-$p$ measure, then
$$ \lambda_{p,K} \big( \{ H \in \Sub(G) \ : \ g \in H \} \big) = (1-p)^{|| g ||_K } , $$
where $(1-p)^\infty = 0$.
We arrive at the conclusion that the weak$^*$ limits of $\lambda_{p,K}$ are:
\begin{proposition}
In the weak$^*$ topology we have that
\begin{align*}
\lambda_{p,K} & \to \delta_{\core_\emptyset(K) } \qquad \textrm{ as } p \to 0 \\
\lambda_{p,K} & \to \delta_{\core_G(K) } \qquad \textrm{ as } p \to 1 \\
\end{align*}
\end{proposition}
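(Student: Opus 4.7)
The plan is to exploit the identity established just before the proposition, namely
$$\lambda_{p,K}(\{H \in \Sub(G) : g \in H\}) = (1-p)^{||g||_K}, \qquad g \in G,$$
with the convention $(1-p)^\infty = 0$. Since $G$ is countable, $\Sub(G)$ is compact and metrizable in the Chabauty topology, and a neighborhood basis at any point $H_0 \in \Sub(G)$ is given by the finite cylinders
$$U_F(H_0) = \{H \in \Sub(G) : H \cap F = H_0 \cap F\}, \qquad F \subset G \text{ finite}.$$
Weak$^*$ convergence of probability measures to a Dirac mass $\delta_{H_0}$ is therefore equivalent to showing $\lambda_{p,K}(U_F(H_0)) \to 1$ as $p$ tends to the desired endpoint, for every such finite $F$.

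By a union bound,
$$\lambda_{p,K}(\Sub(G) \setminus U_F(H_0)) \leq \sum_{g \in F \cap H_0} \left(1 - (1-p)^{||g||_K}\right) + \sum_{g \in F \setminus H_0} (1-p)^{||g||_K},$$
so it suffices to check that both sums vanish in the relevant limit. For $p \to 1$, take $H_0 = \core_G(K) = \{g : ||g||_K = 0\}$: for $g \in H_0$ the corresponding summand is $1 - (1-p)^0 = 0$, and for $g \notin H_0$ one has $||g||_K \geq 1$ so $(1-p)^{||g||_K} \to 0$. For $p \to 0$, take $H_0 = \core_\emptyset(K) = \{g : ||g||_K < \infty\}$: for $g \in H_0$ the integer $||g||_K$ is finite so $1 - (1-p)^{||g||_K} \to 0$, while for $g \notin H_0$ we have $||g||_K = \infty$ and hence $(1-p)^{||g||_K} = 0$ identically for $p > 0$.

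There is no genuine obstacle here: the entire content is already packaged in the identity $\lambda_{p,K}(\{H : g \in H\}) = (1-p)^{||g||_K}$ combined with the definitions of $\core_G(K)$ and $\core_\emptyset(K)$ in terms of $|| \cdot ||_K$. The only mildly delicate bookkeeping is the reduction of weak$^*$ convergence to a point mass to verification on single-element cylinders $\{H : g \in H\}$, which the finite union bound above accomplishes.
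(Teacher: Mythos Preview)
Your argument is correct and is precisely the computation the paper has in mind: the paper does not spell out a proof, writing only ``We arrive at the conclusion'' immediately after recording the identity $\lambda_{p,K}(\{H : g \in H\}) = (1-p)^{||g||_K}$, and your proposal simply supplies the routine details (cylinder neighborhoods, union bound) that make that sentence rigorous.
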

With this in mind, we naturally define $\lambda_{0,K} = \delta_{\core_\emptyset(K)}$ and 
$\lambda_{1,K} = \delta_{\core_G(K)}$.

The importance of these limits is in the following,
which is Proposition 3.3 of \cite{HY18}.

\begin{proposition} \label{prop:cont entropy}
Assume that $\mu$ is a symmetric, adapted probability measure with finite entropy.
Let $h(p) = h(G,\mu,\lambda_{p,K})$.

If $h(0)=0$ then the function $h$ is continuous.

Specifically, if $h(G / \core_\emptyset(K) ,\bar \mu) = 0$, 
then $[0, h(G/ \core_G(K) , \bar \mu) ] \subset \Ii(G,\mu)$.
\end{proposition}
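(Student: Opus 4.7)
The plan is to prove that $h : [0,1] \to \Real$ is continuous and monotone with $h(0) = 0$ and $h(1) = h(G/\core_G(K), \bar \mu)$, so that by the intermediate value theorem $h([0,1]) = [0, h(1)]$. Since $\lambda_{p,K}$ is an ergodic IRS for every $p \in [0,1]$, this immediately yields the inclusion $[0, h(1)] \subset \Ii(G,\mu)$.

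First I would introduce the standard monotone coupling of the family $(\lambda_{p,K})_p$: take i.i.d.\ Uniform$[0,1]$ variables $(U_\theta)_{\theta \in \Theta}$ and set $A_p := \{\theta \in \Theta : U_\theta \leq p\}$, $H_p := \core_{A_p}(K)$, so that $p \leq p'$ implies $A_p \subseteq A_{p'}$ and hence $H_p \supseteq H_{p'}$. Under this coupling the partition of $G$ into cosets of $H_{p'}$ refines that of $H_p$, so the chain rule gives
\[
H(H_{p'} X_t) \;=\; H(H_p X_t) \;+\; H(H_{p'} X_t \mid H_p X_t) .
\]
Taking expectations, dividing by $t$, and letting $t \to \infty$ yields the monotonicity $h(p) \leq h(p')$ together with the identity $h(p') - h(p) = \lim_t \tfrac{1}{t}\,\E[H(H_{p'} X_t \mid H_p X_t)]$. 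For each fixed $t$, I would argue that $a_t(p) := \tfrac{1}{t}\,\E[H(H_p X_t)]$ is continuous in $p$: the finite-dimensional marginals of $\lambda_{p,K}$ depend continuously on $p$ via $\Pr[g \in H_p] = (1-p)^{\|g\|_K}$, and dominated convergence lets one pass this continuity through the entropy functional in the Chabauty topology. Since $h(p) = \inf_t a_t(p)$ by Fekete's lemma, $h$ is upper semi-continuous on $[0,1]$, and combined with monotonicity this already yields right-continuity at every point.

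The main obstacle is left-continuity, and this is precisely where the hypothesis $h(0) = 0$ is needed. Under the coupling, for $p \leq p_0$ one has $H_{p_0} = H_p \cap \bigcap_{\theta \in A_{p_0} \setminus A_p} K^\theta$, and conditionally on $A_p$ the set $A_{p_0} \setminus A_p$ is Bernoulli-$\tfrac{p_0 - p}{1 - p}$ on $\Theta \setminus A_p$. Thus the ``residual'' refinement from $H_p$ to $H_{p_0}$ is controlled by an intersectional IRS with small Bernoulli parameter $q = (p_0 - p)/(1 - p) \to 0$ as $p \uparrow p_0$. My aim would be to bound the conditional-entropy rate $h(p_0) - h(p)$ by (essentially) $h(G,\mu,\lambda_{q,K})$, and then invoke the upper-semicontinuity just established together with $h(0)=0$ to conclude that this bound tends to zero. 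The delicate technical point I expect will be in tracking the correct ``base subgroup'' for the residual IRS and verifying that the $h(0)=0$ hypothesis is inherited by it; I anticipate using monotonicity of random-walk entropy under quotients together with the a.s.\ inclusion $\core_\emptyset(K) \subseteq H_p$ to make this reduction rigorous.

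Once continuity of $h$ is in hand, the proof concludes cleanly. The function $h$ is continuous and monotone on $[0,1]$ with $h(0) = 0$ and $h(1) = h(G/\core_G(K), \bar \mu)$, so by the IVT $h([0,1]) = [0, h(G/\core_G(K), \bar \mu)]$. For each $p \in (0,1)$ the IRS $\lambda_{p,K}$ is ergodic by the construction recalled above, while $\lambda_{0,K} = \delta_{\core_\emptyset(K)}$ and $\lambda_{1,K} = \delta_{\core_G(K)}$ are $\delta$-measures on normal subgroups and hence trivially ergodic IRSs. Consequently every value in $[0, h(G/\core_G(K), \bar \mu)]$ is realized by an ergodic IRS, proving $[0, h(G/\core_G(K), \bar \mu)] \subset \Ii(G,\mu)$.
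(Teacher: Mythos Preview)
The paper does not supply its own proof of this proposition; it is quoted as Proposition~3.3 of \cite{HY18}. Your outline is essentially the standard argument and, as far as one can tell, the one in the cited reference: the monotone Bernoulli coupling gives monotonicity of $h$; each fixed-time average $a_t(p)=\tfrac1t\int H(KX_t)\,d\lambda_{p,K}(K)$ is continuous in $p$ (because $K\mapsto H(KX_t)$ is bounded by $H(X_t)$ and Chabauty-continuous, and $p\mapsto\lambda_{p,K}$ is weak$^*$-continuous); subadditivity then yields $h=\inf_t a_t$ and hence upper semicontinuity; and the residual-IRS bound $h(p_0)-h(p)\le h(G,\mu,\lambda_{q,K})$ together with $h(0)=0$ forces left-continuity. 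The key identity $(H_p\cap L)x=H_px\cap Lx$ indeed gives $H(H_{p_0}X_t\mid H_pX_t)\le H(LX_t)$, and extending the residual Bernoulli set from $\Theta\setminus A_p$ to all of $\Theta$ (still with parameter $q$) only shrinks $L$, so the comparison with $\lambda_{q,K}$ is legitimate.

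One small gap to patch: your formula $q=(p_0-p)/(1-p)$ gives $q=1$ when $p_0=1$, so as written it does not yield left-continuity at $1$. The fix is immediate: for $p_0=1$ one has $A_1\setminus A_p=\Theta\setminus A_p$, which is \emph{unconditionally} Bernoulli-$(1-p)$ on $\Theta$, so the residual subgroup $L=\bigcap_{\theta\notin A_p}K^\theta$ has exactly the law $\lambda_{1-p,K}$ and $h(1)-h(p)\le h(1-p)\to 0$ by the upper semicontinuity already established. With that adjustment (and the standing assumption $|\Theta|=\infty$ needed for ergodicity of $\lambda_{p,K}$ when $p\in(0,1)$), your argument is complete.
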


So, in essence, in order to show that the IRS spectrum $\Ii(G,\mu)$ is large,
we need to find a subgroup $K \leq G$, such that $\core_\emptyset(K)$ is ``large''
and $\core_G(K)$ is ``small''.
In \cite{HY18} such a subgroup $K$ was constructed using Schreier graphs of the free group.
In that construction, which we will shortly describe, it turns out that $G/ \core_\emptyset(K)$ is 
nilpotent, implying that $h(G / \core_\emptyset(K) , \bar \mu) = 0$ for any $\mu$,
by the Choquet-Deny Theorem \cite{CD, Raugi}.
The difficulty there was to show that $h(G / \core_G(K) , \bar \mu)$ is large,
and in \cite{HY18} this was only done for finitely-supported random walks on the free group.
In this paper we will show that the construction in \cite{HY18} actually works for non-finitely supported
random walks.
Specifically, we prove:

\begin{theorem} \label{thm:free groups}
Let $\mu$ be an adapted, symmetric random walk on the free group $\F_d$, with $d \geq 2$.
Assume that $\mu$ has finite second moment.

Then, there exists a non-increasing sequence $K_{n+1} \leq K_n \leq \F_d$ of subgroups
such that the following holds.
\begin{itemize}
\item For every $n$,
the group $\F_d / \core_{\emptyset}(K_n)$ is nilpotent,
and therefore $$ h(\F_d , \mu , \lambda_{0,K_n} ) = 0 . $$

\item For every $n$, the function $p \mapsto h(\F_d, \mu, \lambda_{p,K_n})$ is continuous,
and therefore
$$ [0, h(\F_d/ \core_{\F_d}(K_n)  , \bar \mu) ] \subset \Ii(\F_d, \mu) . $$

\item We have
$$ \lim_{n \to \infty} h(\F_d/ \core_{\F_d}(K_n) , \bar \mu) = h(\F_d , \mu)  $$
and therefore $\F_d$ has full IRS spectrum.
\end{itemize}
\end{theorem}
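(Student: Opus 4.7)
The plan is to follow the scheme of \cite{HY18}, with two extensions: the explicit Schreier-graph construction must be shown to work for infinitely supported walks, and the Green-function estimate used to compare the walk on $\F_d$ with its finite quotients must be extended beyond the finitely supported case. Concretely, I would construct $K_n \leq \F_d$ as in \cite{HY18}, namely as the stabilizer of a base vertex in a pointed Schreier graph $\Gamma_n$ built via explicit commutator identities, arranged so that $\F_d / \core_\emptyset(K_n)$ is nilpotent while $\Gamma_n$ is locally isometric to the Cayley ball of radius $r_n$ in $\F_d$, with $r_n \to \infty$.

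The first bullet is then essentially group-theoretic: nilpotency of $\F_d / \core_\emptyset(K_n)$ is a feature of the construction and is independent of $\mu$. The Choquet--Deny theorem \cite{CD,Raugi} gives $h(\F_d / \core_\emptyset(K_n), \bar\mu) = 0$, and since $\core_\emptyset(K_n) \lhd \F_d$ this equals $h(\F_d, \mu, \lambda_{0,K_n})$. The second bullet is then an immediate application of Proposition \ref{prop:cont entropy}.

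The third bullet is where the main work lies. Writing $G_n = \F_d / \core_{\F_d}(K_n)$ and $\pi_n \colon \F_d \to G_n$, the chain rule for Shannon entropy gives
$$ H(X_t) - H(\pi_n(X_t)) = H(X_t \mid \pi_n(X_t)) , $$
and one must show this is $o(t)$ as $t \to \infty$, with the bound improving as $n \to \infty$. Since $\pi_n$ is injective on $B(r_n) \subset \F_d$, any event that distinguishes the trajectory $X_t$ from its projection must arise either from a single increment $U_s$ of length exceeding $r_n$, or from the walk accumulating enough returns to a bounded region to realize a relator of $G_n$ not present in $\F_d$. The first source is controlled by Chebyshev applied to the $4$-th moment of $\mu$. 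The second is controlled by bounding the expected number of visits of $(X_s)_{s \leq t}$ to balls $B(r) \subset \F_d$, i.e.\ by a Green-function estimate, which should yield an $o(t)$ bound with explicit $r$-dependence. Taking $t \to \infty$ and then $n \to \infty$ with $r_n \to \infty$ would give $h(G_n, \bar\mu) \to h(\F_d, \mu)$, and hence $\Ii(\F_d,\mu) = [0, h(\F_d,\mu)]$ via the second bullet.

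The hardest step I expect is the Green-function bound for balls in $\F_d$ under only the finite $4$-th moment hypothesis; this is the ``bottleneck'' flagged in the remark following Theorem \ref{thm:IRS SL2}. For finitely supported walks, transience of $\F_d$ together with exponential return decay gives such a bound essentially for free, but for walks with unbounded support one must derive heat-kernel decay from the moment assumption itself, which is delicate. A secondary point requiring care is verifying that the Schreier-graph construction of \cite{HY18} still produces the required $r_n \to \infty$ in the infinite-support regime, so that the local isometry of $\Gamma_n$ with balls in $\F_d$ actually translates into a quantitative entropy approximation.
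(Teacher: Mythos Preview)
Your handling of the first two bullets is essentially correct and matches the paper: the Schreier-graph construction and the nilpotency of $\F_d/\core_\emptyset(K_n)$ are purely group-theoretic and independent of $\mu$, so Choquet--Deny and Proposition~\ref{prop:cont entropy} apply exactly as you say. (One small misconception: your ``secondary point'' about verifying $r_n\to\infty$ ``in the infinite-support regime'' is a non-issue --- the graph construction never sees $\mu$; what depends on $\mu$ is only the entropy analysis.)

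The genuine gap is in your approach to the third bullet. You propose to bound $H(X_t\mid\pi_n(X_t))$ directly, arguing that any event distinguishing $X_t$ from its projection arises from a large jump or from returns realizing a relator. But this is not correct: local injectivity of $\pi_n$ on $B(r_n)$ says nothing about $\pi_n(X_t)$ determining $X_t$, since $|X_t|\sim \ell t\gg r_n$ for large $t$. What your heuristic \emph{does} control is $H(X_t\mid (\pi_n(X_s))_{s\le t})$ --- given the full projected trajectory with small increments in a locally-tree-like graph one can indeed reconstruct the increments and hence $X_t$. However conditioning on more reduces entropy, so this only yields $H(X_t\mid\pi_n(X_t))\ge H(X_t\mid (\pi_n(X_s))_{s\le t})$, which is the wrong inequality. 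There is no evident way to pass from trajectory-level to single-time conditional entropy in the direction you need.

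The paper avoids this entirely by working at the boundary rather than at finite time. It uses the Kaimanovich--Vershik identity $t\cdot h=H(X_t)-H(X_t\mid\Tt)$ for both $\F_d$ and $G_n=\F_d/C_n$, where $C_n=\core_{\F_d}(K_n)$, and proves two things for fixed $t$: (i) $H(C_nX_t)\to H(X_t)$ as $n\to\infty$, which is elementary since $\rad(C_n)\ge n$; and (ii) $H(C_nX_t\mid\Tt_n)\le H(X_t\mid\Tt)+\eps$ for large $n$. For (ii) the key device is the \emph{prefix} $\pref_r$ of the limiting geodesic ray: $\pref_r(X_\infty)$ generates $\Tt$ as $r\to\infty$ (Proposition~\ref{prop:prefix gives boundary}), and one shows $\pref_r(C_nX_\infty)=\pref_r(X_\infty)$ with high probability by bounding the probability that the walk on $G_n$ ever changes its $r$-prefix after exiting a large ball. \emph{That} is where the Green-function estimate on visits to balls enters (Lemma~\ref{lem:speed}), combined with a Stankov-type counting argument (Lemma~\ref{lem:Stankov}) to handle unbounded jumps; the finite fourth moment is exactly what makes $\E[|X_1|\cdot\gr_K(K,B_{r+|X_1|})]$ summable against the $O(r^3)$ Green bound. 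So your intuition that the Green-function bound is the bottleneck is right, but it is deployed to control the prefix-change probability at infinity, not to compare $X_t$ and $\pi_n(X_t)$ directly.
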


This immediately implies Theorem \ref{thm:IRS free group}.
The proof of Theorem \ref{thm:free groups} is in Section \ref{scn:Schreier}.

\subsection{Finite index subgroups}

\label{scn:finite index}

In order to prove Theorem \ref{thm:IRS SL2}, we require a way of passing from an intersectional 
IRS on $F$ to an ergodic IRS on $G$, where $[G:F] < \infty$, in a way that controls the random walk entropy.
There are some standard ways to ``lift'' IRSs from finite index subgroups, such as {\em co-induction} from \cite{KQ19},
but these do not control the entropy in a quantitative way, to enable a lifting the full IRS spectrum 
to a full IRS spectrum in the mother group.  Another possibility is to realize the {\em Furstenberg entropy 
of stationary spaces} as in \cite{HT15}, but this is a broader class of entropies than those arising from ergodic IRSs.

For this reason the explicit construction of an intersectional IRS in Theorem \ref{thm:free groups} is useful,
because it provides a random subgroup of the free group, which can also be thought of as a random subgroup
of the mother group.  We now explain how to connect the IRS entropy spectra of the two.

Let $F \leq G , [G:F]<\infty$ be a finite index subgroup.
For a random walk $\mu$ on $G$, there is an induced random walk $\mu_F$ on $F$,
defined as follows:
Let $(X_t)_t$ be the $\mu$-random walk.  Let $T = \inf \{ t \geq 1 \ : \ X_t \in F \}$ be the return time to $F$.
Define $\mu_F(x) = \Pr [ X_T = x ]$.  $\mu_F$ is sometimes called the \define{hitting measure},
because it is the probability measure of the first time the random walk returns to the subgroup $F$.

The hitting measure is well defined whenever $T< \infty$ a.s.
But when $[G:F] < \infty$, then $\mu_F$ will also inherit some properties from $\mu$.

\begin{proposition} \label{prop:hitting measure}
Let $G$ be a finitely generated group, and $F \leq G$ a subgroup of finite index $[G:F] < \infty$.
If $\mu$ is a symmetric, adapted random walk on $G$, with finite $k$-th moment,
then $\mu_F$ is a symmetric, adapted on $F$, and has finite $k$-th moment as well.
\end{proposition}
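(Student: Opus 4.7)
The plan is to establish the four claims in turn: (i) $T<\infty$ almost surely with an exponentially decaying tail, (ii) adaptedness of $\mu_F$, (iii) symmetry of $\mu_F$, and (iv) finiteness of the $k$-th moment.

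For (i), I will project the walk onto the finite coset space $F\backslash G$ via $\bar X_t = F X_t$. Since $\mu$ is adapted on $G$, the projected chain is irreducible on a finite state space, so standard finite Markov chain theory supplies constants $C>0$ and $\rho\in(0,1)$ with $\Pr[T>t]\leq C\rho^t$; in particular $\mu_F$ is a genuine probability measure on $F$ and $T$ has all moments. For (ii), given $f\in F$, adaptedness of $\mu$ yields a positive-probability $\mu$-path $1=X_0,X_1,\ldots,X_n=f$ in $G$; splitting this path at its successive visits to $F$ decomposes $f=y_1 y_2 \cdots y_k$ with each $y_j\in\supp(\mu_F)$, so $\supp(\mu_F)$ generates $F$. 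For (iii), I will use time reversal: to each first-return path $(u_1,\ldots,u_t)$ from $1$ to $x\in F$ I associate its reverse $(u_t^{-1},\ldots,u_1^{-1})$. Its partial product after $t-s$ steps equals $x^{-1}(u_1\cdots u_s)$, which lies outside $F$ precisely because $x\in F$ while each original partial product $u_1\cdots u_s$ lies outside $F$. Hence the reverse is itself a first-return path, from $1$ to $x^{-1}$; symmetry of $\mu$ makes the two paths equiprobable, and summation gives $\mu_F(x)=\mu_F(x^{-1})$.

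The main content is (iv). The triangle inequality on the Cayley graph of $G$ gives $|X_T|\leq\sum_{i=1}^T|U_i|=\sum_{i\geq 1}|U_i|\1{T\geq i}$. The crucial observation is that the event $\{T\geq i\}$ is determined by $U_1,\ldots,U_{i-1}$, hence independent of $U_i$. Minkowski's inequality in $L^k$ (valid for $k\geq 1$) then yields
\[
\norm{X_T}_{L^k} \leq \sum_{i\geq 1} \norm{|U_i|\1{T\geq i}}_{L^k} = \E[|U_1|^k]^{1/k}\sum_{i\geq 1}\Pr[T\geq i]^{1/k},
\]
and the last sum is finite by the exponential tail established in (i). Since $[G:F]<\infty$, the word metrics on $F$ induced from $G$ and from any finite symmetric generating set of $F$ are bi-Lipschitz, so finite $k$-th moment with respect to the $G$-metric gives finite $k$-th moment with respect to the $F$-metric.

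The only real subtlety is in (iv): a direct H\"older-type expansion of $\E\br{(\sum_{i=1}^T|U_i|)^k}$ would require bounding $\E[|U_1|^{2k}]$, one moment more than hypothesized. The Minkowski route above costs only an extra $1/k$ power on the already exponentially small tail $\Pr[T\geq i]$, and therefore consumes exactly the $k$-th moment hypothesis.
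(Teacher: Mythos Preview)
Your proof is correct. The paper's own argument is much terser: it simply cites \cite{BE95,MY16} and imports from \cite{MY16} a stochastic domination $|X_T| \leq_{\mathrm{st}} W = \sum_{j=1}^{\tau} N_j$, where $\tau \sim T$ and $(N_j)_j$ are i.i.d.\ with $\E[N_j^k]<\infty$, and crucially $\tau$ is \emph{independent} of the $N_j$'s. That independence lets one expand $\E[W^k]$ directly and bound it by $\E[\tau^k]\,\E[N_1^k]$ via Jensen. Adaptedness and symmetry are not argued at all in the paper.

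Your route is more self-contained. For the moment bound you work with the natural inequality $|X_T|\leq \sum_{i\geq 1}|U_i|\1{T\geq i}$, where $T$ is \emph{not} independent of the summands; you compensate by exploiting the predictability $\{T\geq i\}\in\sigma(U_1,\ldots,U_{i-1})$ together with Minkowski, which neatly factors each term as $\E[|U_1|^k]^{1/k}\Pr[T\geq i]^{1/k}$. This avoids the external stochastic-domination lemma entirely and uses exactly the $k$-th moment, at the cost of requiring $k\geq 1$ (harmless here). Your explicit treatments of adaptedness (splitting a $\mu$-path at successive visits to $F$) and symmetry (the path-reversal bijection, checking that $Y_j = x^{-1}X_{t-j}\notin F$ because $x\in F$ and $X_{t-j}\notin F$) fill in details the paper leaves implicit.
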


\begin{proof}
This basically follows from the proof of  \cite[Proposition 3.3]{BE95}, 
or from the proof of \cite[Lemma 3.2]{MY16}.

Let $(X_t)_t$ be the $\mu$-random walk, and let $T = \inf \{ t \geq 1 \ : \ X_t \in F \}$.
In the proof of Lemma 3.2 of \cite{MY16}, it is shown that 
$|X_T|$ is stochastically dominated by $W = \sum_{j=1}^\tau N_j$,
for some independent random variables $\tau , N_1, N_2 , \ldots$, 
such that $\tau$ has the same 
distribution as $T$, and $(N_j)_j$ take on only non-negative integer values,
and all have the same distribution.  Moreover, if $\mu$ has finite $k$-th moment, 
then $\E [N_j^k] < \infty$.

Since the process $(F X_t)_t$ is an irreducible Markov chain on the {\em finite} state space $G/F$, 
it is well known that $\Pr [ \tau > t ] \leq e^{-ct}$ for some $c>0$ and all $t>0$ (see \eg \cite{LPW}).
Specifically, $\E[\tau^k] < \infty$.
Recall that by Jensen's inequality, for any $j_1, \ldots, j_k \in \N$ (not necessarily distinct)
$$ \E [ \prod_{i=1}^k N_{j_i} ] \leq \E[N_1^k ] . $$
So using the independence of $\tau, (N_j)_j$,
\begin{align*}
\E [|X_T|^k] & \leq \E [ |W|^k ]  \leq 
\sum_t \Pr [ \tau =t ] \cdot \E [N_1^k] \cdot t^k
\leq \E [\tau^k] \cdot \E [N_1^k ] < \infty .
\end{align*}
\end{proof}

The following proposition is well known and by no means original, the proof is included for completeness.
(See \cite{Abramov} for more on this type of relation.)

\begin{proposition} \label{prop:abramov}
Let $F \leq G$ be a subgroup of finite index, $[G:F] < \infty$.
Let $\mu$ be an adapted symmetric random walk on $G$, and let $\mu_F$ be the corresponding 
hitting measure on $F$.

Then, for any $N \lhd F$ which is normal in $G$,
$$ h(G/N , \bar \mu) = [G:F] \cdot h(F/N , \overline{\mu_F} ) . $$
\end{proposition}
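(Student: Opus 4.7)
The plan is to view both $h(G/N,\bar\mu)$ and $h(F/N,\overline{\mu_F})$ as entropy rates of a single process on $G/N$, sampled at two different time scales whose asymptotic ratio is $[G:F]$. Concretely, since $N\lhd G$ and $N\leq F$, the projection $(NX_t)_t$ is a $\bar\mu$-random walk on $G/N$; moreover, $X_t\in F\iff NX_t\in F/N$, so the successive return times $T_1<T_2<\cdots$ of $(X_t)$ to $F$ are also the return times of $(NX_t)$ to $F/N$, and $(NX_{T_k})_k$ is exactly the $\overline{\mu_F}$-random walk on $F/N$. By the definitions of the two entropy rates,
\[
\tfrac{1}{t}H(NX_t)\To h(G/N,\bar\mu)\AND\tfrac{1}{k}H(NX_{T_k})\To h(F/N,\overline{\mu_F}).
\]

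The ergodic input I would use is that the induced chain $(FX_t)_t$ on the finite state space $G/F$ is irreducible and reversible (as $\mu$ is adapted and symmetric), and hence has uniform stationary distribution. Kac's lemma then gives $\E[T_{k+1}-T_k]=[G:F]$, and the strong law of large numbers applied to the i.i.d.\ increments $(T_{k+1}-T_k)_{k\geq 1}$ yields $T_k/k\to[G:F]$ almost surely, together with exponential tails for $T_{k+1}-T_k$ coming from finite-state mixing.

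To conclude, I would compute $H(NX_{T_k})$ in two ways and match asymptotics as $k\to\infty$: from the $\overline{\mu_F}$-walk viewpoint it is $k\cdot h(F/N,\overline{\mu_F})+o(k)$, while from the $\bar\mu$-walk viewpoint read at the random time $T_k$ it should be $T_k\cdot h(G/N,\bar\mu)+o(T_k)\approx k[G:F]\cdot h(G/N,\bar\mu)+o(k)$. The main obstacle is making this second comparison rigorous despite $T_k$ being random; the tool is
\[
H(NX_{T_k})\leq H(NX_{T_k},T_k)=H(T_k)+\sum_{t\geq 1}\Pr[T_k=t]\,H(NX_{T_k}\mid T_k=t),
\]
combined with $H(T_k)=O(\log k)$ (from the exponential tails above), the concentration $T_k/k\to[G:F]$, and the finite entropy of $\bar\mu$ (and of $\overline{\mu_F}$ via Proposition \ref{prop:hitting measure}) giving the uniform linear bound $H(NX_t)\leq t\cdot H(\bar\mu)$. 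A matching lower bound follows from the Markov property at the stopping time $T_k$, since conditioning on the full past up to $T_k$ gives access to both $T_k$ and the increment beyond. Equating the two expressions for $H(NX_{T_k})$ then produces the stated proportionality with factor $[G:F]$.
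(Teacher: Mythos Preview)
Your approach is essentially the same as the paper's: introduce the successive return times $T_k$ to $F$, identify $(NX_{T_k})_k$ with the $\overline{\mu_F}$-walk on $F/N$, invoke Kac's formula and the law of large numbers for the i.i.d.\ increments $T_k-T_{k-1}$ to get $T_k/k\to[G:F]$, and then compare the two entropy rates through $H(NX_{T_k})$. The paper's writeup is in fact terser than yours on the one delicate point---the randomness of $T_k$ when passing from $H(NX_{T_k})$ to $h(G/N,\bar\mu)$---so your explicit use of $H(T_k)=O(\log k)$ and the decomposition $H(NX_{T_k},T_k)=H(T_k)+H(NX_{T_k}\mid T_k)$ only adds detail to the same argument.
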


\begin{proof}
Let $(X_t)_t$ denote the $\mu$-random walk on $G$.
Define inductively $T_0 = 0$ and 
$$ T_{n+1} = \inf \{ t \geq T_n + 1 \ : \ X_t \in F \} . $$
Define $Y_n = X_{T_n}$, which results in $(Y_n)_n$ being a $\mu_F$-random walk on $F$.

Since $F/N, G/N$ are groups, we have that 
\begin{align*}
h(G/N , \bar \mu ) & = \lim_{t \to \infty} \frac{ H(N X_t)}{t} \\
h(F/N , \overline{\mu_H} ) & = \lim_{n \to \infty} \frac{ H(N Y_n) }{ n } 
\\
& = \lim_{n \to \infty} \frac{ H(N X_{T_n})  }{ T_n } \cdot  \lim_{n \to \infty} \frac{ T_n}{n} 
= h(G/N , \bar \mu ) \cdot \lim_{n \to \infty} \frac{ T_n}{n} ,
\end{align*}
whenever $\lim_{n \to \infty} \frac{ T_n }{ n}$ exists.

Write $D_n = T_n- T_{n-1}$ for $n \geq 1$, and note that $(D_n)_n$ form a sequence
of independent random variable, all with the same distribution, that of $T_1$.
It is well known (see \eg \cite{LPW} or \cite{Abramov}) that $\E[T_1] = [G:F]$.
Thus $\lim_{n \to \infty} \frac{ T_n }{ n} = [G:F]$ a.s., by the law or large numbers, completing the proof.
\end{proof}

The following extends the results of Theorem \ref{thm:free groups} to $G$ which is virtually free,
proving Theorem \ref{thm:IRS SL2}.

For the rest of this section we assume that $G$ is a finitely generated group and $F \leq G$
is a subgroup of finite index $[G:F] < \infty$ such that $F \cong \F_d , d \geq 2$ 
is a finitely generated free group.
Since subgroups of free groups are free as well, by passing to a subgroup of $F$ of finite index,
we may assume that $F$ is a normal subgroup of $G$.
Let $\mu$ be an adapted, symmetric random walk on $G$, with finite second moment.
Let $\mu_F$ be the hitting measure on $F$.

Let $K \leq F$.  We may regard $K$ as a subgroup of $G$ and of $F$.
So $K$ will induce intersectional IRSs in both $G$ and $F$.
In order to differentiate between $\core_\emptyset(K)$ when thinking of $K$ as a subgroup of $G$
and when thinking of $K$ as a subgroup of $F$, we will put a superscript
$\core_\emptyset^G(K) , \core_\emptyset^F(K)$.  Similarly for the norm $|| g ||_K^G , || g ||_K^F$.

\begin{lemma} \label{lem:nilpotent for empty core}
If $F / \core_\emptyset^F(K)$ is nilpotent, then $G / \core_\emptyset^G(K)$ is virtually nilpotent.
\end{lemma}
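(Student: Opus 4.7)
The plan is to use that $F \lhd G$ (as assumed in the text) and decompose $\Theta_G(K)$ into $F$-orbits to relate $\core^G_\emptyset(K)$ to $\core^F_\emptyset(K)$ and its $G$-conjugates. The finiteness of $[G:F]$ is what will convert ``nilpotent'' on $F$ into ``virtually nilpotent'' on $G$.

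First I would analyze the $F$-action (by right multiplication, equivalently by conjugation on subgroups) on $\Theta_G(K) = G/N_G(K)$. Since $[G:F]<\infty$ there are only finitely many $F$-orbits; pick representatives $\gamma_1,\dots,\gamma_\ell$ so that $\Theta_G(K) = \bigsqcup_{j=1}^\ell O_j$ with $O_j$ the $F$-orbit of $N_G(K)\gamma_j$. The stabilizer of $N_G(K)\gamma_j$ under the $F$-action is $\gamma_j^{-1} N_G(K)\gamma_j \cap F = N_F(K^{\gamma_j})$, so there is a natural $F$-equivariant bijection $O_j \leftrightarrow \Theta_F(K^{\gamma_j})$ under which the coset $N_G(K)\gamma_j f$ corresponds to the subgroup $K^{\gamma_j f} = (K^{\gamma_j})^f$. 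Note $K^{\gamma_j} \leq F$ since $F$ is normal in $G$, so this is really a subgroup of $F$ being conjugated inside $F$.

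Next I would show the key identity
\[
F \cap \core^G_\emptyset(K) \;=\; \bigcap_{j=1}^\ell \core^F_\emptyset(K^{\gamma_j}).
\]
For $g\in F$, the condition $g\in \core^G_\emptyset(K)$ says that $\{\theta\in\Theta_G(K) : g\notin K^\theta\}$ is finite, and under the above decomposition this is equivalent to each restriction to $O_j$ being finite, which by the bijection of the previous paragraph is precisely $g\in\core^F_\emptyset(K^{\gamma_j})$. Since each $\gamma_j$ normalizes $F$, conjugation by $\gamma_j$ is an automorphism of $F$ that sends $K$ to $K^{\gamma_j}$ and therefore sends $\core^F_\emptyset(K)$ to $\core^F_\emptyset(K^{\gamma_j})$ (by the equivariance of the construction). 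In particular each quotient $F/\core^F_\emptyset(K^{\gamma_j})$ is isomorphic to $F/\core^F_\emptyset(K)$, hence nilpotent by hypothesis.

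Finally, setting $M = F \cap \core^G_\emptyset(K)$, the quotient $F/M$ embeds diagonally into the finite product $\prod_{j=1}^\ell F/\core^F_\emptyset(K^{\gamma_j})$ of nilpotent groups of bounded nilpotency class, so $F/M$ is nilpotent. The natural map $F/M \hookrightarrow G/\core^G_\emptyset(K)$ identifies $F/M$ with $F\cdot\core^G_\emptyset(K)/\core^G_\emptyset(K)$, a subgroup of index $[G : F\cdot\core^G_\emptyset(K)] \leq [G:F] < \infty$. Thus $G/\core^G_\emptyset(K)$ contains a nilpotent subgroup of finite index, i.e.\ it is virtually nilpotent. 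The only nontrivial step is the orbit-decomposition identity in paragraph two; everything else is bookkeeping with the $G$-equivariance of $K \mapsto \core_\emptyset(K)$ and elementary coset combinatorics.
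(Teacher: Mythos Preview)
Your proof is correct and follows essentially the same route as the paper's: both decompose the $G$-conjugates of $K$ over a set of representatives (you use $F$-orbit representatives in $\Theta_G(K)$, the paper uses left-coset representatives of $F$ in $G$), both show that conjugation by each representative is an automorphism of $F$ carrying $\core_\emptyset^F(K)$ to $\core_\emptyset^F(K^{\gamma_j})$, and both conclude that $F/(F\cap \core_\emptyset^G(K))$ is nilpotent and of finite index in $G/\core_\emptyset^G(K)$. The only cosmetic differences are that your orbit analysis yields the exact equality $F\cap\core_\emptyset^G(K)=\bigcap_j\core_\emptyset^F(K^{\gamma_j})$ (the paper is content with the inclusion $\supseteq$), and that you deduce nilpotency via the diagonal embedding into a finite product whereas the paper invokes the characteristic lower-central-series term $F^{(n)}$; neither change affects the substance of the argument.
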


\begin{proof}
Denote $C_G = \core_\emptyset^G(K)$ and $C_F = \core_\emptyset^F(K)$.
Note that, 
$$ G / C_G \cong \faktor{G / (F \cap C_G) }{ C_G / (F \cap C_G) } . $$
So it suffices to show that $G / (F \cap C_G)$ is virtually nilpotent.
Since
$$ [ G / (F \cap C_G) : F  / (F \cap C_G) ] = [G:F] < \infty , $$
so it suffices to show that $F / (F \cap C_G)$ is nilpotent.

Let $T$ be a set of representatives of left-cosets of $F$ in $G$.  
So $|T|=[G:F]$ and $G = \biguplus_{t \in T } t F$.
Note that for any fixed $\vphi \in F$,
$$ \{ K^g \ : \ g \in G \ , \ \vphi \not\in K^g \} \subseteq \bigcup_{t \in T} \{ K^{tf} \ : \ f \in F \ , \ \vphi \not\in K^{tf} \} $$
which implies that (considering $K^t$ as subgroups of $F$) we have 
$$  || \vphi ||_{K}^G \leq \sum_{t \in T} || \vphi ||_{K^t}^F . $$
Thus,
$$ C : = \bigcap_{t \in T} \core_{\emptyset}^F (K^t) \subseteq F \cap C_G . $$

We assumed that $F / C_F$ is nilpotent.
Let $F^{(0)} = F , F^{(k+1)} = [F^{(k)} , F]$ be the lower central series of $F$.
So for some $n$ we have $F^{(n)} \lhd C_F$.
Note that since $F \lhd G$, we have that  $K^g \leq F$ for all $g \in G$.
So, for any fixed $t \in T$,
\begin{align*}
|| \vphi^t ||_{K^t}^F & = \# \{ K^{tf} \ : \ f \in F \ , \ \vphi^t \not\in K^{tf} \} 
= \# \{ K^{ft} \ : \ f \in F \ , \ \vphi \not\in K^f \} = || \vphi ||_K^F .
\end{align*}
Thus, the isomorphism $\vphi \mapsto \vphi^t$, which is an automorphism of $F$
(since $F \lhd G$), maps $\core_\emptyset^F(K)$ onto $\core_\emptyset^F(K^t)$.
Since $F^{(n)}$ is preserved by any automorphism, we conclude that 
$F^{(n)} \lhd \core_\emptyset^F(K^t)$ for all $t \in T$.
This implies that $F / C$ is nilpotent as well.

Finally, 
$$ F / (F \cap C_G) \cong \faktor{ F / C }{ (F \cap C_G) / C }  $$
implying that $F / (F \cap C_G)$ is nilpotent, as required.
\end{proof}

\begin{lemma} \label{lem:entropy for core}
We have that
$$ h( G / \core_G(K) , \bar \mu ) \geq [G:F] \cdot h( F / \core_F(K) , \overline{\mu_F} ) . $$
\end{lemma}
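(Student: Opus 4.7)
The plan is to combine Proposition \ref{prop:abramov} with a simple monotonicity of entropy in the normal subgroup. First, I would observe that $\core_G(K)$ is normal in $G$ by construction, and hence also normal in $F$ since $F \leq G$. Therefore Proposition \ref{prop:abramov} applies with $N = \core_G(K)$, yielding
$$ h(G/\core_G(K), \bar{\mu}) = [G:F] \cdot h(F/\core_G(K), \overline{\mu_F}). $$

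Next, I would use the trivial inclusion $\core_G(K) = \bigcap_{g \in G} K^g \subseteq \bigcap_{f \in F} K^f = \core_F(K)$, which holds simply because $F \subseteq G$. This gives a natural surjection $F/\core_G(K) \twoheadrightarrow F/\core_F(K)$, so if $(Y_n)_n$ denotes the $\overline{\mu_F}$-random walk on $F$, then the coset $\core_F(K) Y_n$ is a deterministic function of $\core_G(K) Y_n$. The standard monotonicity of Shannon entropy under functions (the same principle that gives $h(G,\mu,\lambda) \leq h(G,\mu)$ as noted in the introduction) yields
$$ h(F/\core_G(K), \overline{\mu_F}) \geq h(F/\core_F(K), \overline{\mu_F}). $$

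Chaining these two displays produces the claimed inequality. No serious obstacle arises: the argument is a direct application of the Abramov-type identity from Proposition \ref{prop:abramov}, together with the elementary fact that the $G$-core refines the $F$-core. The only things to check are that $\core_G(K)$ is normal both in $F$ and in $G$ (immediate from the definition of the normal core), and the inclusion $\core_G(K) \subseteq \core_F(K)$ (immediate from $F \subseteq G$).
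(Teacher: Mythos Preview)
Your proof is correct and uses the same two ingredients as the paper's: the inclusion $\core_G(K) \subseteq \core_F(K)$ and the Abramov-type identity of Proposition~\ref{prop:abramov}. The only difference is the order of application: you invoke Proposition~\ref{prop:abramov} with $N = \core_G(K)$ and then use entropy monotonicity on the $F$-side, whereas the paper first uses monotonicity on the $G$-side (deducing $h(G/\core_G(K),\bar\mu) \geq h(G/\core_F(K),\bar\mu)$) and then invokes Proposition~\ref{prop:abramov} with $N = \core_F(K)$. Your ordering is arguably the cleaner of the two, since $\core_G(K)$ is normal in $G$ by construction (exactly what the hypothesis of Proposition~\ref{prop:abramov} demands), while $\core_F(K)$ need not be normal in $G$ in general even when $F \lhd G$; the paper's route tacitly uses this normality without verifying it.
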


\begin{proof}
Denote $C_G = \core_G (K)$ and $C_F = \core_F(K)$.
First, note that 
$$ C_G = \bigcap_{g \in G} K^g \subset \bigcap_{g \in F} K^g = C_F . $$
This implies that $H( C_F X_t) \leq H(C_G X_t)$
for any $t$.  Thus, $h(G/ C_G , \bar \mu) \geq h(G / C_F , \bar \mu)$.

By Proposition \ref{prop:abramov} we have that 
$$ h(G/C_F , \bar \mu) = [G:F] \cdot h(F / C_F , \overline{\mu_F}) $$
completing the proof.
\end{proof}

Combining Theorem \ref{thm:free groups} with Lemmas \ref{lem:nilpotent for empty core}
and \ref{lem:entropy for core} we can prove Theorem \ref{thm:IRS SL2}.

\begin{proof}[Proof of Theorem \ref{thm:IRS SL2}]
Let $F \leq G, [G:F] < \infty$ such that $F \cong \F_d$ is a finitely generated free group ($d \geq 2$).
By Proposition \ref{prop:hitting measure}, given an adapted, symmetric random walk $\mu$ on $G$,
with finite second moment, the hitting measure $\mu_F$ is a symmetric, adapted random walk on $F$
with finite second moment, so Theorem \ref{thm:free groups} is applicable to $(F,\mu_F)$.

Let $K_{n+1} \leq K_n \leq F$ be the subgroups as guaranteed by Theorem \ref{thm:free groups}.

Theorem \ref{thm:free groups} tells us that $F / \core_\emptyset^F(K_n)$ is nilpotent,
so that by Lemma \ref{lem:nilpotent for empty core} we have that $G / \core_\emptyset^G(K_n)$
is virtually nilpotent.  The Choquet-Deny Theorem \cite{CD, Raugi} tells us that 
$h(G / \core_\emptyset^G(K_n) , \bar \mu ) = 0$.
So by Proposition \ref{prop:cont entropy} 
we have that $[0, h(G / \core_G(K_n)  , \bar \mu) ] \subset \Ii(G,\mu)$.

By Lemma \ref{lem:entropy for core} and Theorem \ref{thm:free groups} we have that
$$ h( G / \core_G(K_n) , \bar \mu ) \geq [G:F] \cdot h( F / \core_F(K_n) , \overline{\mu_F} ) 
\to [G:F] \cdot h(F , \mu_F)  $$
as $n \to \infty$.
By Proposition \ref{prop:abramov} we have $h(G,\mu) = [G:F] \cdot h(F, \mu_F)$.
We conclude that $[0, h(G,\mu)] = \Ii(G,\mu)$.
\end{proof}

\section{Intersectional IRSs in the free group}

\label{scn:Schreier}

In this section we prove Theorem \ref{thm:free groups}.

\subsection{Schreier graphs of free groups}

We begin with some notation and simple facts regarding Schreier graphs.

Let $S$ be a finite set.
A \define{rooted, $S$-labeled, oriented multigraph} is a tuple 
$(V,S,\ell,o)$ with the following properties: $V$ is a non-empty set, 
$\ell : S \times V \to V$, and $o \in V$.
We think of $( v, \ell(s,v) )$ is an oriented edge in the graph,
labeled by $s$.  
$o$ is the root of the graph.
Note that multiple edges and self-loops are allowed.
Also, at every vertex there is exactly one edge labeled by each $s \in S$, 
oriented outwards.
Such a graph is called \define{proper} if for every $u \in V$ there is $v \in V$ such that $\ell(s,v)=u$;
that is, at every vertex there is also an incoming oriented edge labeled by $s$.
The edge with reversed orientation $(\ell(s,v) , v )$ is thought to be labeled
by $s^{-1}$ (which at the moment is just a new label).
We write $\ell(s^{-1},u) = v$ in this case.

Note that by forgetting the orientation, this induces a graph structure on the vertex set $V$,
by letting $\{v,u\}$ be an edge in the graph whenever $\ell(s,v) = u$ or $\ell(s,u)=v$ for some $s \in S$.
So there is a natural graph metric, and a notion of paths in such cases.

An isomorphism between two proper, rooted, labeled, oriented multigraphs
$(V,S,\ell,o)$ and $(V',S',\ell',o')$,
is a bijection $\vphi : V \to V'$ such that $|S| = |S'|$,
for every $s \in S$ there is $s' \in S'$ such that $\ell'(s',\vphi(v)) = \vphi(\ell(s, \vphi(v)))$
for all $v \in V$, and such that $\vphi(o) = o'$.
\ie a graph isomorphism preserving the labeling and orientation.

One can generate such graphs using group actions.
Let $G$ be a finitely generated group, and fix some finite, symmetric, 
generating set $S = S^{-1} , |S| < \infty, G = \IP{S}$, and
assume that no element of $S$ is its own inverse (this is a technical condition, in order to make the presentation simpler).
If $G$ acts from the right on some set $X$, then for a root $o \in X$ we may define
$\ell(s,x) = x.s$ for all $s \in S$, and we obtain the {\em Schreier graph} of the $G$-action on $X$.
Moreover, taking $K = \stab(o)$ one obtains a subgroup $K \leq G$. 

The other direction is also possible.
For a subgroup $K \leq G$, the \define{Schreier graph} of $K$ (with respect to $S$),
denoted $\Sch(K) = \Sch_S(K)$, is the proper, rooted, $S$-labeled, oriented multigraph
$(V,S,\ell,o)$ with $V = G/K , \ell(s,Kg) = Kgs$ and $o = K$.
We denote by $|Kg| = |Kg|_S$ the graph distance in $\Sch_S(K)$ between $Kg$ and the root $K$.
Also, $B(Kg,r) = B_S(Kg,r)$ denotes the ball of radius $r$ around $Kg$ in the graph $\Sch_S(K)$.

Note that if $K \lhd G$ is a normal subgroup, then the Schreier graph $\Sch_S(K)$
is the Cayley graph of $G/K$ with respect to the generating set $\{ Ks \ : \ s \in S \}$.

One advantage of the free group is in the ease of constructing Schreier graphs, and hence subgroups.
Suppose that $|S|=d$ and consider a proper, rooted, $S$-labeled, oriented multigraph $(V,S,\ell,o)$.
The free group $F$ generated by the elements of $S$ acts on $V$ as follows:
For every $s \in S$ and $v \in V$ set $v.s = \ell(s,v)$ and $v.s^{-1} = \ell(s^{-1},v)$.
Since the group $F$ is free, this defines an action of $F$ on $V$.
Thus, in order to construct subgroups of $\F_d$, we need to construct some
proper, rooted, $S$-labeled, oriented multigraph where $S$ is a basic set of generators for $\F_d$.

\begin{figure}[h] 
\includegraphics[width=0.7\textwidth]{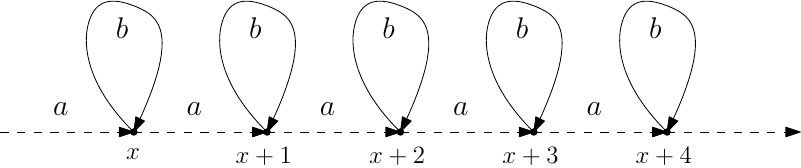}
\caption{Schreier graph $\mathcal{Z}_a$ for $K_a \lhd \F_2 = \IP{a,b}$}
\label{fig:Za}
\end{figure}

\begin{example}
Let $\pi : \F_d \to \Z$ be a surjective homomorphism mapping $\pi(s) =1$
and $\pi\big|_{S \setminus\{s\}} \equiv 0$.
Let $K_s = \ker (\pi)$.  

The Schreier graph of $K_s \lhd \F_d$ is denoted $\mathcal{Z}_s$.
It looks like the graph of $\mathbb{Z}$ with $d-1$ loops at each vertex.  
The loops are labeled by $S \setminus \{s\}$, and 
the oriented edge $(x,x+1)$ is labeled by $s$.
See Figure \ref{fig:Za}.
\end{example}

\subsection{Local properties of Schreier graphs}

Throughout the rest of this section, 
we will always assume some fixed set of free generators $S = \{a_1, \ldots, a_d\}$ for $\F_d$,
used to determine all Schreier graphs.

\begin{definition}
Let $S$ be a finite symmetric generating set for a group $G$.
Let $K \leq G$ be a subgroup, and let $N_1,...,N_m \leq G$ be subgroups. 
We say that $G/K$ is locally-$(G/N_1, \ldots , G/N_m)$ if $\Sch(K)$ satisfies the following:

For every $r>0$ there exists $R>0$ such that for all $Kg \in \Sch(K)$ such that $|Kg|>R$ there exists 
$1 \leq j \leq m$ such that the induced subgraphs on
$B(Kg,r) \subseteq \Sch(K)$ and $B(N_j,r) \subseteq \Sch(N_j)$ 
are isomorphic as rooted, oriented, $S$-labeled multigraphs.
\end{definition}

That is, the Schreier graph of $G/K$ looks locally like one of the Schreier graphs of $G/N_j$, $j=1,\ldots,m$.
Note that this is equivalent to saying that in the Chabauty topology on $\Sub(G)$ the accumulation points of the
orbit $K^G = \{ K^g \ : \ g \in G \}$ are a subset of $\{ N_1, \ldots, N_m \}$.

The following is basically Proposition 4.11 from \cite{HY18}, 
connecting the notion of locality to $\core_\emptyset(K)$.
However, we require something sightly stronger for the proof of Theorem \ref{thm:free groups}, 
which will enable us to intersect subgroups and still preserve local properties.

\begin{lemma}
Let $K , N_1,...,N_m \leq F$ be subgroups.

If $F / K$ is locally-$(F/N_1, \ldots, F/N_m)$ then 
for any $g \in \cap_{j=1}^m N_j$ there exists $R>0$ such that 
if $|Kf| > R$ then $g \in K^f$.
\end{lemma}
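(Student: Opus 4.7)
The approach I would take rests on translating the condition $g \in K^f$ into a local condition in the Schreier graph $\Sch(K)$. Observe that $g \in K^f$ is equivalent to $fgf^{-1} \in K$, which in turn is equivalent to $Kfg = Kf$. In other words, $g \in K^f$ exactly when $g$ fixes the vertex $Kf$ under the natural right $F$-action on $\Sch(K)$. Similarly, $g \in N_j$ is equivalent to $N_j$ being fixed by $g$ in $\Sch(N_j)$.

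The key lemma behind the argument is that whether $g$ fixes a vertex $v$ in a Schreier graph is a purely local condition: writing $g = s_{i_1} s_{i_2} \cdots s_{i_k}$ with $s_{i_j} \in S \cup S^{-1}$ and $k = |g|$, computing $v.g$ requires only following $k$ successive labeled edges out of $v$, so it is entirely determined by the isomorphism class of $B(v,|g|)$ as a rooted, $S$-labeled, oriented multigraph. So if two such balls, one around $v \in \Sch(K)$ and one around $w \in \Sch(N_j)$, are isomorphic via a label-preserving rooted isomorphism $\varphi$, then $\varphi(v.g) = w.g$, and in particular $v.g = v$ iff $w.g = w$.

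With this in hand, the plan is straightforward. Fix $g \in \bigcap_{j=1}^m N_j$ and set $r := |g|$. Apply the locally-$(F/N_1,\ldots,F/N_m)$ hypothesis at scale $r$ to obtain $R>0$ such that whenever $|Kf| > R$, there exists some index $j$ for which the induced subgraphs on $B(Kf,r) \subseteq \Sch(K)$ and $B(N_j,r) \subseteq \Sch(N_j)$ are isomorphic as rooted, $S$-labeled, oriented multigraphs. Since $g \in N_j$, the root $N_j$ is fixed by $g$ in $\Sch(N_j)$, i.e.\ $N_j.g = N_j$. By the locality observation above, transporting this through the rooted isomorphism shows $Kf.g = Kf$, so $g \in K^f$, as required.

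There is no serious obstacle: the only subtlety is to verify that traversing the word $g$ from a vertex stays within $B(v,|g|)$ and uses only edge labels that the isomorphism preserves, which follows from the definition of a proper $S$-labeled oriented multigraph (each vertex has exactly one outgoing edge per label, with inverses indicated by reversed orientation). This is precisely why one works with the rooted, oriented, labeled category rather than plain rooted graphs.
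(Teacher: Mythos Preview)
Your proposal is correct and follows essentially the same approach as the paper: both set $r=|g|$, invoke the local hypothesis to get the isomorphism of $r$-balls, and then track the path $Kf, Kfs_1, \ldots, Kfg$ through the label-preserving isomorphism to conclude $Kfg=Kf$. The paper writes out the path explicitly rather than phrasing it as ``fixing a vertex is a local condition,'' but the content is identical.
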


\begin{proof}
Let $\Lambda = \Sch(K)$ and $\Gamma_j = \Sch(N_j)$.

Fix $g \in \cap_{j=1}^m N_j$.
Let $r = |g|$.  Let $R>0$ be large enough so that for any $Kf \in \Lambda$ with $|Kf| > R$,
the ball $B_\Lambda(Kf, r)$ is isomorphic to one of $B_{\Gamma_j}(N_j , r)$.

Fix some $Kf \in \Lambda$ with $|Kf|>R$, 
and let $N = N_j , \Gamma = \Gamma_j$ for $j$ such that 
$B_\Lambda(Kf, r)$ is isomorphic to $B_{\Gamma_j}(N_j , r)$.

Write $g = s_1 s_2 \cdots s_{r}$ as a reduced word in the generators of $F$,
and set $g_j = s_1 \cdots s_j$ for all $1 \leq j \leq r$, and $g_0 = 1$.
Since $N g_0 , \ldots, N g_r$ is a path in $\Gamma$, from $N = Ng_0$ to $N g = N g_r$,
of length at most $r$, we have that it is contained in $B_\Gamma(N,r)$.
Thus, it is isomorphic to the path $Kf , Kfg_1 , \ldots, Kf g_r$ in 
$\Lambda$, inside the ball $B_\Lambda(Kg,r)$.  

Since $g \in N$ by assumption, it must be that $N g_r = Ng = N = Ng_0$.
So also in the isomorphic copy of the path we must have that $Kf = Kf g_0 = Kf g_r = Kf g$.
This implies that $g \in K^f$.
\end{proof}

\begin{lemma}
Let $K_1, \ldots, K_m \leq F$ and let $N_{1,1}, \ldots, N_{1, \ell_1} , \ldots,
N_{m,1}, \ldots, N_{m, \ell_m} \leq F$ be subgroups of $F$.
For any $1 \leq j \leq m$ assume that $F/K_j$ is locally-$(F/N_{j,1} , \ldots , F/N_{j,\ell_j})$.

Then, 
$$ \bigcap_{j=1}^m \bigcap_{i=1}^{\ell_j} N_{j,i} \leq \core_\emptyset( K_1 \cap \cdots \cap K_m) . $$
\end{lemma}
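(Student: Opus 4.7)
The plan is to reduce to the preceding lemma through the covering maps $\pi_j \colon \Sch(K) \to \Sch(K_j)$, where $K = K_1 \cap \cdots \cap K_m$, and to exploit the identity $K^f = \bigcap_j K_j^f$ (valid since conjugation commutes with intersection).

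Fix $g \in \bigcap_{j,i} N_{j,i}$ and let $r = |g|$. For each $j$ the preceding lemma applied to $(K_j,\{N_{j,i}\}_i)$ yields $R_j > 0$ such that $|K_j f|_{\Sch(K_j)} > R_j$ forces $g \in K_j^f$. Whenever $f \in F$ satisfies $|K_j f|_{\Sch(K_j)} > R_j$ simultaneously for every $j$, one gets $g \in K_j^f$ for all $j$, hence $g \in \bigcap_j K_j^f = K^f$. Contrapositively, any bad coset $Kf \in \Sch(K)$ (with $g \notin K^f$) must lie in $\bigcup_{j=1}^m \pi_j^{-1}\bigl(B_{\Sch(K_j)}(K_j,R_j)\bigr)$, that is, inside finitely many pullbacks of finite balls.

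To deduce $\|g\|_K < \infty$, and hence $g \in \core_\emptyset(K)$, I would observe that $\Sch(K)$ is locally modelled on Schreier graphs of intersections of the $N_{j,i}$: at any coset $Kf$ with $|K_j f|_{\Sch(K_j)} > R_j$ for every $j$, the ball $B_{\Sch(K)}(Kf, r)$ is isomorphic, as a rooted $S$-labeled multigraph, to $B_{\Sch(\bigcap_j N_{j,i_j})}\bigl(\bigcap_j N_{j,i_j}, r\bigr)$ for a suitable tuple $(i_j)$. The reason is that the identification $Kfw_1 = Kfw_2$ of short words reduces to $w_1 w_2^{-1} \in \bigcap_j K_j^f$, and each $K_j^f$ coincides with the corresponding $N_{j,i_j}$ inside the relevant word-length ball by the locality of $F/K_j$; intersecting these local identifications yields the claimed model.

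The main obstacle will be converting the set-theoretic containment of the bad set in $\bigcup_j \pi_j^{-1}(B_j)$ into the required finite bound on distinct conjugates $K^f$, since the pullbacks can have infinite fibres. The key is that bad conjugates correspond to bad tuples $(K_1^f, \ldots, K_m^f)$ via the intersection identity, each such tuple has at least one coordinate in the finite set $\{K_j^\gamma : g \notin K_j^\gamma\}$ of size $\|g\|_{K_j} < \infty$ (supplied by the preceding lemma), and the local matching of $\Sch(K)$ with $\Sch(\bigcap_j N_{j,i_j})$ at non-exceptional cosets constrains the remaining coordinates enough that the total number of distinct intersections that can be realised is finite.
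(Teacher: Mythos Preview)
The paper's proof is a three-line direct count: after obtaining $R_j$ from the preceding lemma (exactly as in your first paragraph), it writes
\[
\|g\|_K \;\leq\; \#\{f\in F : g\notin K^f\} \;\leq\; \sum_{j} \#\{f\in F : g\notin K_j^f\} \;\leq\; \sum_{j} |B_{F/K_j}(K_j,R_j)| \;<\; \infty,
\]
using only the union identity $\{f:g\notin K^f\}=\bigcup_j\{f:g\notin K_j^f\}$ and the containment of the bad cosets $K_jf$ in the ball of radius $R_j$. There is no analysis of $\Sch(K)$, no local modelling by intersections of the $N_{j,i}$, and no tuple argument.

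Your proposal, by contrast, has a genuine gap in the third paragraph. The local-model claim of your second paragraph is correct as stated, but it is beside the point: it describes $\Sch(K)$ only at cosets $Kf$ where \emph{every} $|K_jf|>R_j$, and at such cosets your first paragraph already gives $g\in K^f$, so nothing remains to prove there. All of the content lies at the exceptional cosets in $\bigcup_j\pi_j^{-1}(B_j)$, and there your assertion that the local matching ``constrains the remaining coordinates enough'' has no force --- that matching was established only at \emph{non}-exceptional cosets, so it imposes no constraint whatsoever on the tuples $(K_1^f,\ldots,K_m^f)$ arising from exceptional $f$. Nothing you have written rules out a single bad coordinate $K_j^f$ pairing with infinitely many values of the other coordinates to produce infinitely many distinct $K^f=\bigcap_i K_i^f$. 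You are right that this fibre issue is the crux --- and indeed the paper's displayed chain, read literally, glosses over the same point, since $\{f\in F:g\notin K_j^f\}$ is a union of $K_j$-cosets and hence typically infinite as a subset of $F$ --- but your proposal does not resolve it.
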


\begin{proof}
If $g \in  \bigcap_{j=1}^m \bigcap_{i=1}^{\ell_j} N_{j,i} $,
then for each $j$ there exists some $R_j>0$ such that 
$\{ f \in F \ : \ g \not\in K_j^f \} \subset B_{F/K_j}(K_j,R_j)$.
Thus, for 
$K = K_1 \cap \cdots \cap K_m$ we get that 
\begin{align*}
 || g ||_K & \leq \# \{ f \in F \ : \ g \not\in (K_1 \cap \cdots \cap K_m)^f \} 
\leq \sum_{j=1}^m \# \{ f \in F \  : \ g \not\in K_j^f \} \\
& \leq \sum_{j=1}^m |B_{F/K_j}(1,R_j)| < \infty ,
\end{align*}
so that $g \in \core_\emptyset(K)$.
\end{proof}

We will use the following special case in order to find the subgroups $K_n$ in Theorem \ref{thm:free groups}.

\begin{corollary}  \label{cor:locally nilpotent}
Let $N_1 , \ldots,  N_m \leq F$ be subgroups such that $M \leq N_1 \cap \cdots \cap  N_m$ 
for some subgroup $M \leq F$.
Let $(K_n)_n$ be a sequence of subgroups such that 
$F/K_n$ is locally-$(F/N_1 , \ldots,  F/N_m)$ for every $n$.

Then, $M \leq \core_\emptyset ( \cap_{j=1}^n K_j )$ for any $n$.
\end{corollary}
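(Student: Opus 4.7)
The plan is to recognize this as a direct specialization of the preceding lemma: whereas the lemma allows each $K_j$ to have its own list of local model subgroups $(N_{j,1},\ldots,N_{j,\ell_j})$, here every $K_n$ in the sequence has the \emph{same} list of local models $(N_1,\ldots,N_m)$. So the corollary should follow essentially by applying the lemma to a finite initial segment of the sequence, with each $K_j$ using the common list.

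Concretely, fix $n$ and apply the preceding lemma with the subgroups $K_1,\ldots,K_n$, setting $\ell_j = m$ and $N_{j,i} = N_i$ for all $1\leq j\leq n$ and $1\leq i\leq m$. By hypothesis, $F/K_j$ is locally-$(F/N_1,\ldots,F/N_m)$ for each $j$, so the lemma applies and gives
\[
\bigcap_{j=1}^n \bigcap_{i=1}^m N_{j,i} \;=\; \bigcap_{i=1}^m N_i \;\leq\; \core_\emptyset\bigl(K_1 \cap \cdots \cap K_n\bigr).
\]
Then the assumption $M \leq N_1 \cap \cdots \cap N_m$ immediately yields $M \leq \core_\emptyset(\cap_{j=1}^n K_j)$, as required.

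There is really no obstacle here; the corollary is a packaging of the previous lemma tailored to the setting where a single fixed collection of local models $(F/N_1,\ldots,F/N_m)$ controls an entire sequence of Schreier graphs. The only (trivial) check is that collapsing the doubly-indexed intersection $\bigcap_{j,i} N_{j,i}$ to the singly-indexed $\bigcap_i N_i$ is legitimate when all the inner lists coincide, which is immediate. This is exactly the form needed to iteratively build the subgroups $K_n$ in Theorem~\ref{thm:free groups}, since it guarantees that passing to finite intersections does not destroy the key containment $M \leq \core_\emptyset(\cdot)$.
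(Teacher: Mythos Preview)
Your proof is correct and is exactly the intended argument: the paper presents this corollary as an immediate special case of the preceding lemma without writing out a separate proof, and your specialization (taking $\ell_j=m$ and $N_{j,i}=N_i$ for each $j\leq n$) is precisely how one reads it off.
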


\subsection{Gluing graphs}

In this subsection we describe how to glue together Schreier graphs, in order to obtain 
a Schreier graph of $K \leq \F_d$ that is ``locally nilpotent''.

~ \\

\begin{definition}
For $1 \neq g \in \F_d$, write $g = s_1 \cdots s_n$ as a minimal word in the generators and their inverses
(so $n = |g|$).  Define
$$ \rad(g) = \max \{ k \geq 0 \ : \ s_1 \cdots s_k = (s_{n-k+1} \cdots s_n)^{-1} \} . $$
Note that $0 \leq \rad(g) < \tfrac{|g|}{2}$.

For $K \leq \F_d$ set $\rad(K) = \min \{ \rad(g) \ : \ 1 \neq g \in K \}$.
\end{definition}

It is immediate that if $K \leq M \leq \F_d$ then $\rad(K) \geq \rad(M)$.

\begin{proposition}
Let $K \leq \F_d$ with $\rad(K) \geq r$.
Then, for any $Kg$ in the Schreier graph $\Sch(K)$ with $|Kg| > r$, there exists 
a unique $g_r \in \F_d$ with $|g_r| = r$ such that 
any path in $\Sch(K)$ from $Kg$ to $K$ must pass through $Kg_r$. 
\end{proposition}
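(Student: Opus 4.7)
The plan is to lift to the tree $T$ (Cayley graph of $\F_d$), using the covering $\pi : T \to \Sch(K)$, $g \mapsto Kg$. First I would establish that $\pi$ is injective on the ball $B_T(1,r)$: any two distinct elements $u, v \in T$ with $|u|, |v| \leq r$ and $Ku = Kv$ would yield $uv^{-1} \in K \setminus \{1\}$ of length $\leq 2r$, contradicting $\rad(K) \geq r$ together with the bound $\rad(x) < |x|/2$. This lets me define $g_r := g_0[1:r]$, the length-$r$ prefix of any shortest representative $g_0 \in Kg$, and a similar length argument (analysing the freely reduced form of $g_0(g_0')^{-1}$ via the longest common suffix) shows that any two shortest representatives must agree on their first $r$ letters, so $g_r$ is well-defined.

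The heart of the proof is showing that every walk in $\Sch(K)$ from $K$ to $Kg$ passes through $Kg_r$. Such a walk has a label word whose value $w \in \F_d$ lies in $Kg$, and lifting to $T$ gives a walk from $1$ to $w$; since $T$ is a tree this walk contains the unique geodesic from $1$ to $w$, so it suffices to exhibit, for each $w \in Kg$, some prefix of the freely reduced $w$ lying in $Kg_r \subset T$. I would write $w = kg_0$ with $k \in K$ and split on the number $\ell$ of cancellations in the formal product $kg_0$. When $\ell < r$, direct bookkeeping using the identity $k[1:|k|-\ell] = k \cdot g_0[1:\ell]^{-1}$ shows that the prefix of $w$ of length $r + |k| - 2\ell$ equals $k \cdot g_r$, which lies in $Kg_r$ since $k \in K$. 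When $\ell \geq r$, the cancellation structure forces the last $r$ letters of $k$ (read in order) to form the word $g_r^{-1}$; applying the hypothesis $\rad(k) \geq r$ then forces the first $r$ letters of $k$ to equal $g_r$, and since the minimality of $g_0$ gives $\ell \leq |k|/2$ (and thus $|k| \geq 2r$), the length-$r$ prefix of $w$ is just $k[1:r] = g_r \in Kg_r$.

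I expect Case B ($\ell \geq r$) to be the main obstacle: the naive picture that every walk exits $B(K,r)$ at the same boundary vertex fails, since a walk corresponding to $w = kg_0$ with heavy cancellation begins by tracing $k$'s letters and may exit the ball at a coset different from $Kg_r$; only the radius condition applied to $k$ itself inside $K$ recovers the correct length-$r$ prefix. Uniqueness is then a short final check: any shortest walk from $K$ to $Kg$ is a geodesic in $\Sch(K)$ whose unique vertex at distance $r$ is $Kg_r$, and any candidate $h$ of length $r$ satisfies $|Kh| = r$ by the injection on $B(K,r)$, so $Kh$ can appear on this walk only at distance $r$, forcing $Kh = Kg_r$ and hence $h = g_r$.
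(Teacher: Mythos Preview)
Your argument is correct. The paper takes a shorter route: it restricts to \emph{simple} paths in $\Sch(K)$ (sufficient, since any path contains a simple subpath on a subset of its vertices), observes that the label word $t_1\cdots t_m$ of a simple path is then automatically freely reduced, and applies the $\rad$ condition directly to $h=gf_m^{-1}=s_1\cdots s_n\,t_m^{-1}\cdots t_1^{-1}\in K$ to force $s_i=t_i$ for $i\le r$. Translating between the two setups (your $k$ is the paper's $h^{-1}$, and the two cancellation counts satisfy $\ell+c=|g_0|$), your Case~B ($\ell\ge r$) is exactly the regime $n-c\ge r$ in which the paper's one-line deduction goes through. Your Case~A ($\ell<r$) is a small-cancellation situation the paper never confronts; in fact it cannot occur for simple paths, because with $\ell<r$ one checks (using $\rad(k)\ge r$) that $k[1{:}\ell]=g_0[1{:}\ell]$, so the path would revisit the coset $Kg_0[1{:}\ell]$ at both step $\ell$ and step $|k|-\ell>\ell$. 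So the paper's reduction to simple paths silently eliminates the case you work hardest to set up. What your approach buys is a uniform treatment of arbitrary walks via the tree lift, together with an explicit identification of where on the lifted geodesic the coset $Kg_r$ is met; the cost is the two-case bookkeeping.
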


\begin{proof}
Recall that $S$ is the fixed set of generators for $\F_d$.

Let $|Kg| > r$.  
Write $g = s_1 \cdots s_n$ for $n = |g|$ and $s_j \in S \cup S^{-1}$.
Set $g_j = s_1 \cdots s_j$. 

We claim that any path from $K$ to $Kg$ in $\Sch(K)$ must pass through $K g_r$.
Indeed, let 
$K = Kf_0 , Kf_1 , \ldots, K f_m = Kg$ be any simple path in $\Sch(K)$ 
from $K$ to $Kg$.
By passing to equivalent elements ${\pmod K}$, we may assume that 
$f_j = t_1 \cdots t_j$
for some generators and inverses 
$t_1 , \ldots, t_m \in S \cup S^{-1}$.

Consider, first, 
$$ h = s_1 \cdots s_n \cdot t_m^{-1}  \cdots t_1^{-1} = g f_m^{-1} \in K . $$
If $h \neq 1$ then since $\rad(h) \geq \rad(K) \geq r$, 
this is only possible if $s_1 = t_1, \ldots, s_r = t_r$.
Thus, $K f_r = K g_r$, which completes the proof.

The only other possibility is that $h=1$ which implies that $s_j = t_j$ for all $j \leq n=m$.
Since $|Kg| , |Kf_m| >r$ it must be that $n,m >r$, so $Kf_r = Kg_r$, which completes the proof.
\end{proof}

\begin{proposition} \label{prop:large rad(K)}
If $K \leq \F_d$ with $\rad(K) \geq r$, then the ball of radius $r$ in $\Sch(K)$ is isomorphic 
(as a rooted oriented labeled graph) to the ball of radius $r$ in $\F_d$.
\end{proposition}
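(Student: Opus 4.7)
The plan is to show the natural projection $\pi \colon \F_d \to \F_d/K$, $g \mapsto Kg$, restricts to an isomorphism of rooted, $S$-labeled, oriented multigraphs from the ball $B(1,r)$ in the Cayley graph of $\F_d$ onto $B(K,r)$ in $\Sch(K)$. Globally, $\pi$ is a morphism of such multigraphs because $\pi(gs) = Kgs = (Kg)\cdot s$ and $\pi(1) = K$, so once it is shown to restrict to a bijection on the vertex sets of the two balls, edge preservation follows from the fact that both graphs have, at every vertex, exactly one outgoing oriented edge per label $s \in S \cup S^{-1}$, compatible with $\pi$.

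The key ingredient I would first isolate is the length estimate extracted from the hypothesis: since $\rad(h) < |h|/2$ for every $h \neq 1$, the assumption $\rad(K) \geq r$ implies that every nontrivial $h \in K$ satisfies $|h| > 2r$. This single inequality drives everything that follows.

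For surjectivity on vertices, any $Kg \in B(K,r)$ is reached from the root by a walk of length at most $r$ in $\Sch(K)$; reading the sequence of labels produces a word which, after free reduction in $\F_d$, gives a representative $\tilde g$ with $|\tilde g| \leq r$ and $K\tilde g = Kg$, so $\pi(\tilde g) = Kg$. For injectivity, if $g, g' \in B(1,r)$ satisfy $Kg = Kg'$, then $h = g(g')^{-1} \in K$ has reduced length at most $|g| + |g'| \leq 2r$; the length estimate forces $h = 1$, so $g = g'$.

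For the final step, every edge of $B(1,r)$ is transported to an edge of $B(K,r)$ by the morphism property. In the reverse direction, for an edge $(Kg, Kgs)$ internal to $B(K,r)$ one lifts the endpoints via the bijection to $\tilde g, \tilde g' \in B(1,r)$ and observes that $\tilde g s(\tilde g')^{-1} \in K$ has reduced length bounded by roughly $2r$; invoking the length estimate one more time concludes $\tilde g s = \tilde g'$ in $\F_d$, so the edge of $B(K,r)$ is the image of an actual edge of $B(1,r)$. The main technical pinch is this last lifting: it is exactly where the $\rad(K)$ hypothesis is used to preclude short relations of the form $\tilde g s (\tilde g')^{-1} \in K$ that would manufacture extra edges (or self-loops) in $B(K,r)$ which are absent from the tree $B(1,r)$.
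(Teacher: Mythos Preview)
Your vertex-bijection argument (injectivity via the length estimate, surjectivity by reading labels along a geodesic) is correct and is essentially what the paper does---indeed the paper only writes out the injectivity half and simply asserts that the map ``preserves the edges and labeling'' without further comment. In that respect you are more careful than the paper.

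The edge-lifting step, however, has a real gap. The bound you get is $|\tilde g s(\tilde g')^{-1}| \leq 2r+1$, not $2r$, while your length estimate only gives $|h| \geq 2r+1$ for nontrivial $h\in K$; these do not combine to force $h=1$. And the gap is not merely cosmetic. Take $d=2$, $r=1$, and $K=\langle aba^{-1}\rangle \leq \F_2$. Every nontrivial element of $K$ has the form $ab^na^{-1}$ with $n\neq 0$, and one checks $\rad(ab^na^{-1})=1$, so $\rad(K)=1=r$. But in $\Sch(K)$ the vertex $Ka$ (at distance $1$ from the root) carries a $b$-labeled self-loop, since $Kab=Ka$. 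Thus the induced subgraph on $B(K,1)$ is \emph{not} isomorphic, as a labeled multigraph, to the $4$-star $B(1,1)\subset\F_2$. So the full labeled-graph isomorphism of induced subgraphs actually needs the slightly stronger hypothesis $\rad(K)\geq r+1$; under the stated hypothesis only the vertex bijection (plus forward edge-preservation) is available.

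That said, the paper never uses more than the vertex bijection: it is invoked only to define $\pref_r(Kg)$ for $|Kg|\leq r$ and to identify the leaves of the depth-$r$ tree with elements of $\F_d$. Your argument for that part is complete. If you want the full graph-isomorphism statement, either strengthen the hypothesis to $\rad(K)\geq r+1$, or argue directly with $\rad$ rather than only the length estimate: in the boundary case one finds $h=\tilde g s\tilde g^{-1}$ is reduced of length exactly $2r+1$ with $\rad(h)=r$, which is consistent with $\rad(K)\geq r$ but excluded by $\rad(K)\geq r+1$.
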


\begin{proof}
One just checks that the map $f \mapsto Kf$, restricted to $|f| \leq r$, 
is a bijection preserving the edges and labeling.
Indeed, if $Kf = Kg$ for $|f| , |g| \leq r$, then write $f,g$ as reduced words in $\F_d$,
$g = s_1 \cdots s_n$ and $f = t_1 \cdots t_m$ for $s_j , t_j \in S \cup S^{-1}$. 
We have that $g f^{-1} = s_1 \cdots s_n \cdot t_m^{-1} \cdots t_1^{-1} \in K$,
so $\rad(g f^{-1} ) \geq r \geq \max \{ n,m \}$.  Hence it must be that $g=f$.
\end{proof}

\begin{definition}
Let $K \leq \F_d$, and let $r \leq \rad(K)$.
Define the \define{$r$-prefix} of $Kg$, denoted $\pref_r(Kg)$, as the following element of $\F_d$:

If $|Kg| > r$ then define $\pref_r(Kg) = f$ for the unique $f \in \F_d$ with $|f|=r$, such that
any path in $\Sch(K)$ from $Kg$ to $K$ must pass through $Kf$.

If $|Kg| \leq r$ then define $\pref_r(Kg) = f$ for the unique $f \in \F_d$ such that $|f| = |Kg|$
and $Kf = Kg$.
\end{definition}

That is, if $\rad(K) \geq r$, then $\Sch(K)$
looks like graphs attached to the leaves 
of a depth-$r$ regular tree, labeled by the generators $a_1, \ldots ,a_d$ and their inverses.
The $r$-prefix of a vertex 
at distance more than $r$ from the root is given as follows: this vertex 
is in some unique subgraph ``hanging'' off
some vertex $u$ at the $r$-th level of the tree.  
This specific vertex $u$ can be canonically identified with an element $f \in \F_d$ 
such that $|f|=r$, because the ball of radius $r$ in $\Sch(K)$ about the root 
is isomorphic to the ball of radius $r$ in $\F_d$ about the origin.
The element $f$ will be the $r$-prefix.

Note that one may take $K = \{1\}$ in the definition of the $r$-prefix, 
so that $\pref_r(g)$ is defined for elements $g \in \F_d$.
When $|g| > r$,
$\pref_r(g)$ is just the ancestor of $g$ in level $r$ of the $2d$-regular tree which is the 
Cayley graph of $\F_d$.  When $|g| \leq r$ then $\pref_r(g) = g$.

This notion of prefix will be very useful in approximating the boundary behavior of the random walk
on the free group, see below in the proof of Lemma \ref{lem:main lem}.

If $\mu$ is a symmetric, adapted measure on $\F_d$ with finite first moment,
then the $\mu$-random walk $(X_t)_t$ is always transient.
Thus, for any $r>0$, the sequence $(\pref_r(X_t))_t$ stabilizes a.s.\ on some element in $\F_d$.
We define $\pref_r(X_\infty) = \lim_{t \to \infty} \pref_r(X_t)$ to be this a.s.\ limit.
See Corollary \ref{cor:pref at infinity} below for a more general situation.

One main property of $\pref_r(X_\infty)$ is given by the following, 
which is Proposition 14.41 of \cite{LyonsPeres}.

\begin{proposition} \label{prop:prefix gives boundary}
Let $\mu$ be a symmetric, adapted measure on $\F_d$ with finite first moment.
Let $(X_t)_t$ denote the $\mu$-random walk.
Let $\Tt$ denote the tail $\sigma$-algebra of the walk; that is
$$ \Tt = \bigcap_{t} \sigma( X_t, X_{t+1} , \ldots ) . $$

Then, 
$$ \lim_{r \to \infty } H(X_1 \ | \ \pref_r(X_\infty) ) = H(X_1 \ | \ \Tt) . $$
\end{proposition}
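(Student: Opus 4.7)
The plan is to identify the tail $\sigma$-algebra $\Tt$ (up to null sets) with the $\sigma$-algebra generated by the boundary limit $X_\infty$, then let the prefix $\sigma$-algebras increase up to $\sigma(X_\infty)$ and pass to the limit in conditional entropy.

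First I would verify that the $\mu$-random walk converges almost surely to a point $X_\infty$ in the geometric boundary $\partial \F_d$ (the Cantor set of infinite reduced words in $S \cup S^{-1}$). Since $\mu$ is adapted and symmetric with finite first moment, the walk is transient and $|X_t| \to \infty$ a.s.; as the Cayley graph of $\F_d$ is a tree, the reduced-word representations of $X_t$ stabilize letterwise, giving an a.s.\ limit $X_\infty \in \partial \F_d$. In particular $\pref_r(X_\infty)$ equals the length-$r$ initial segment of $X_\infty$ for all $r$, once $t$ is large enough that $\pref_r(X_t)$ has stabilized.

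Next I would invoke the identification of the tail $\sigma$-algebra with $\sigma(X_\infty)$ (modulo null sets): for a symmetric random walk on the free group with finite first moment, $\partial \F_d$ equipped with the hitting measure is a model of the Poisson/tail boundary. Concretely, one shows that any bounded tail-measurable function is a.s.\ equal to a function of $X_\infty$ — this is the content of the ray/strip criterion of Kaimanovich specialized to trees, or can be proved directly by noting that the shifted walk $(X_t^{-1} X_{t+k})_k$ becomes asymptotically independent of $X_t$ on the part of its history that does not interact with the ray $\pref_\infty(X_\infty)$. Given this identification, $H(X_1 \mid \Tt) = H(X_1 \mid X_\infty)$.

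Now the sequence $\sigma(\pref_r(X_\infty))$ is non-decreasing in $r$, since the $r$-prefix is determined by the $(r{+}1)$-prefix, and $\bigvee_r \sigma(\pref_r(X_\infty)) = \sigma(X_\infty)$ because the collection of prefixes determines the infinite reduced word. Since $H(X_1) = H(\mu) < \infty$ (this is implicit in the entropy setting we are in), a standard conditional-entropy continuity argument applies: write
\[
H(X_1 \mid \pref_r(X_\infty)) = \E\bigl[ \eta\bigl( \Pr[X_1 = \cdot \mid \pref_r(X_\infty)] \bigr) \bigr],
\]
where $\eta(q) = -\sum_x q(x) \log q(x)$. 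The inner conditional probabilities form a martingale in $r$ that converges a.s.\ and in $L^1$ to $\Pr[X_1 = \cdot \mid X_\infty]$ by Lévy's upward theorem, and a dominated convergence / truncation argument on $\eta$ (using $H(X_1) < \infty$) yields $H(X_1 \mid \pref_r(X_\infty)) \to H(X_1 \mid X_\infty) = H(X_1 \mid \Tt)$.

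The main obstacle is the middle step: identifying $\Tt$ with $\sigma(X_\infty)$. The a.s.\ convergence of $X_t$ to a boundary point and the entropy continuity are routine, but equating the tail boundary with the geometric boundary requires genuine input — either quoting the Poisson boundary identification for free groups (Derriennic, Kaimanovich), or running a strip/ray argument tailored to the tree structure of $\F_d$.
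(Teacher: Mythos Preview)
The paper does not prove this proposition at all: it is simply cited as Proposition~14.41 of \cite{LyonsPeres}. Your sketch is correct and in fact reproduces the standard argument behind that reference: almost sure convergence of $X_t$ to a point $X_\infty \in \partial\F_d$, identification of the tail $\sigma$-algebra with $\sigma(X_\infty)$ via the Poisson-boundary description for free groups (which, as you rightly flag, is the substantive input --- Derriennic for finite support, Kaimanovich's ray/strip criterion for finite first moment), and then downward continuity of conditional entropy along the increasing filtration $\sigma(\pref_r(X_\infty)) \uparrow \sigma(X_\infty)$.

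Two minor remarks. First, $H(X_1)<\infty$ is not assumed explicitly in the statement, but it follows from finite first moment together with the exponential growth of $\F_d$, so your use of it is legitimate. Second, your ``dominated convergence / truncation'' for the entropy limit is fine but can be replaced by the cleaner general fact that $H(X\mid\mathcal F_r)$ is nonincreasing in $r$ and converges to $H(X\mid\bigvee_r\mathcal F_r)$ whenever $H(X)<\infty$; this avoids having to handle the unboundedness of $-\xi\log\xi$ by hand.
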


Another property is that upon exiting a large ball, if $\rad(K)$ is much larger than the ball's radius,
the $r$-prefix is basically the same as that of the walk on the free group itself.

\begin{lemma} \label{lem:large radius for prefix}
Let $\mu$ be a symmetric, adapted measure on $\F_d$ with finite first moment.
Let $(X_t)_t$ denote the $\mu$-random walk.
Define the stopping time
$$ T_n = \inf \{ t \geq 0 \ : \ |X_t| > n \} $$
(\ie the first time the walk exits the ball of radius $n$).

For any $\eps>0$ there exist $n_0$ such that for all $n \geq n_0$ the following holds.

Let $K \leq \F_d$ be such that $\rad(K) \geq n^3$.
Then, for any $r \leq n$,
$$ \Pr [ \pref_r(X_{T_n}) \neq \pref_r(K X_{T_n}) ] < \eps . $$
\end{lemma}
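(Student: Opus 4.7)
The plan is to contain the bad event inside a deviation event for the endpoint $X_{T_n}$ and then bound that event using the finite first moment together with the positive speed of walks on $\F_d$. The key geometric observation, using the earlier ball-isomorphism proposition, is that since $\rad(K) \geq n^3$ the ball of radius $n^3$ around the root in $\Sch(K)$ is isomorphic as a rooted, $S$-labeled, oriented multigraph to the ball of radius $n^3$ around $1$ in $\F_d$. So if $|X_{T_n}| \leq n^3$, then $KX_{T_n}$ lies inside this isomorphic ball and satisfies $|KX_{T_n}| = |X_{T_n}| > n \geq r$; the bottleneck characterization of $\pref_r$ (guaranteed by $\rad(K) \geq r$ and $|KX_{T_n}| > r$) transports across the isomorphism to give $\pref_r(KX_{T_n}) = \pref_r(X_{T_n})$. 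Hence the bad event is contained in $\{|X_{T_n}| > n^3\}$, and it suffices to prove that this has probability less than $\eps$ for all $n \geq n_0$.

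For the tail bound, I would use that $|X_{T_n-1}| \leq n$ by definition of $T_n$, so the triangle inequality gives $\{|X_{T_n}| > n^3\} \subseteq \{|U_{T_n}| > n^3 - n\}$. Setting $M = n^3 - n$ and splitting on a cutoff $A = A(n)$, I would write
\[
\Pr[|U_{T_n}| > M] \leq \Pr[T_n > A] + A \cdot \Pr[|U_1| > M],
\]
since on $\{T_n \leq A\}$ the event $\{|U_{T_n}| > M\}$ is contained in $\{\exists t \leq A : |U_t| > M\}$, which is controlled by a union bound over $t$. Taking $A = n^{3/2}$, the finite first moment hypothesis yields $M \cdot \Pr[|U_1| > M] \to 0$, so the second term is at most $(A/M) \cdot o(1) = O(n^{-3/2}) \cdot o(1)$ and vanishes.

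The remaining term $\Pr[T_n > n^{3/2}]$ is where the argument is most delicate: I would invoke that adapted symmetric random walks of finite first moment on the non-amenable group $\F_d$ have positive speed, \ie $|X_t|/t \to \ell > 0$ almost surely. This is standard for random walks on trees (see \eg \cite{LyonsPeres}) and gives $T_n/n \to 1/\ell$ almost surely, so in particular $\Pr[T_n > n^{3/2}] \to 0$. Combining everything gives $\Pr[|X_{T_n}| > n^3] \to 0$, and one then picks $n_0$ so that this probability is below $\eps$. The main obstacle is precisely this last step: under only the finite first moment assumption, the usual concentration inequalities (Azuma--Hoeffding, Chebyshev) are not available, and one must rely on non-amenability of $\F_d$ to supply the required speed estimate.
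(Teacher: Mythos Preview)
Your proof is correct and follows essentially the same route as the paper: both reduce the bad event to $\{|X_{T_n}|>n^3\}$ (the paper phrases this as $\{T_n=T_{n^3}\}$), then split on a time cutoff and control the two pieces via the finite first moment (for the large-jump term) and non-amenability of $\F_d$ (for the tail of $T_n$). The only substantive difference is in this last step: where you invoke positive speed $|X_t|/t\to\ell>0$ a.s.\ to deduce $\Pr[T_n>n^{3/2}]\to 0$, the paper instead uses the quantitative bound $\E[T_n]\le C_\mu n$ (a direct consequence of non-amenability) together with Markov's inequality, which gives an explicit rate and sidesteps the a.s.\ speed argument you flag as delicate.
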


\begin{proof}
An important observation is that for $g \in \F_d$, 
$K \leq \F_d$ with $\rad(K) \geq |g|$, we have that $\pref_r(K g) = g$.

Let $j = n^3$.
If $\rad(K) \geq j$, note that 
$\pref_r(X_t) = \pref_r(K X_t)$
for all $t < T_j$.
By definition $T_j \geq T_n$, so
\begin{align*}
\Pr [ \pref_r(X_{T_n}) \neq \pref_r(K X_{T_n}) ] & \leq \Pr [ T_n \geq T_j ] 
\leq \Pr [ T_n = T_j \leq m ] + \Pr [ T_n = T_j > m ] .
\end{align*}
Let $U_t = X_{t-1}^{-1} X_t$, denote the i.i.d.-$\mu$ ``jumps'' of the walk.
The event $\{ T_j \leq m \}$ implies that there exists some $t \leq m$ such that $|U_t| \geq \frac{j}{m}$.
By Markov's inequality, 
$$ \Pr [ T_n = T_j \leq m] \leq m \cdot \frac{m \E|X_1| }{j} . $$
Also, it is well known that since $\F_d$ is non-amenable, $\E[T_n] \leq C_\mu n$,
for some constant $C_\mu>0$ independent of $n$.
So
$$ \Pr [ T_n = T_j > m ] \leq \frac{\E[T_n] }{ m } \leq \frac{C_{\mu}n}{m} . $$
Choosing $m = n^{5/4}$ and recalling that $j = n^3$, we arrive at
\begin{align*}
\Pr [ \pref_r(X_{T_n}) \neq \pref_r(K X_{T_n}) ] & \leq \frac{ \E |X_1| }{ \sqrt{n} }  + \frac{ C_\mu }{ n^{1/4} } ,
\end{align*}
which tends to $0$ as $n \to \infty$.
\end{proof}

We now repeat the construction from Section 4.3 in \cite{HY18}, see there for a proof.
This will provide us with a Schreier graph which has $\rad \geq n$ and is also ``locally nilpotent''.

\begin{lemma} \label{lem:gluing}
Let $N \lhd \F_d$, and let $n>0$ be some integer.
For a generator $s \in S$ recall that $K_s = \ker (\pi_s)$ 
where $\pi_s : \F_d \to \Z$ is the surjective homomorphism mapping $\pi_s(s)=1$ and $\pi\big|_{S \setminus\{s\}} \equiv 0$.

There exists $K \leq \F_d$ such that $\rad(K) \geq n$ and also such that 
$\F_d/K$ is 
locally-$(\F_d/N , \F_d/ K_{a_1} , \ldots, \F_d/ K_{a_d} )$.
\end{lemma}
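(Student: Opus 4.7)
The plan is to exhibit $K$ as the stabilizer in $\F_d$ of a distinguished root $o$ in an explicit proper, rooted, $S$-labeled, oriented multigraph $\Lambda$, built by gluing. The graph $\Lambda$ will consist of a central ``core'' together with ``arms'' attached at each boundary vertex of the core: the core will be chosen to force $\rad(K) \geq n$, while the arms will furnish the prescribed local structure outside a bounded neighborhood of $o$.

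Take the core to be the ball of radius $n$ in the Cayley graph of $\F_d$, i.e.\ the regular tree $T_n$ of depth $n$ rooted at $o$. Each leaf $v$ of $T_n$ has exactly $2d-1$ dangling half-edges; the remaining edge-end at $v$ is used by the parent edge, labeled by some $a_i$. Close the dangling half-edges at $v$ as follows: for every generator $a_j$ with $j \neq i$, pair the outgoing and incoming $a_j$-half-edges into a self-loop at $v$, consuming $2(d-1)$ half-edges and matching the loop structure at every vertex of $\mathcal{Z}_{a_i}$; then use the sole remaining outgoing $a_i$-half-edge to introduce a fresh vertex $v_1$, which in turn carries self-loops for every $a_k$ with $k \neq i$ and an outgoing $a_i$-edge to a new $v_2$, and so on. Thus each leaf spawns a disjoint semi-infinite half-line $v = v_0, v_1, v_2, \ldots$ lifted straight from $\mathcal{Z}_{a_i}$. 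To realize $\F_d/N$ as an additional local model on a bulk portion of $\Lambda$, one may instead truncate a long initial segment of such an arm and splice it into a large copy of the Cayley graph of $\F_d/N$, with an appropriate ball removed to accommodate the junction.

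Three points then need checking. First, $\Lambda$ is a valid proper rooted labeled oriented multigraph, which follows from a case-check of the degree-and-label count at each vertex type (core-internal, leaf, or arm-internal). Second, the ball of radius $n$ in $\Lambda$ around $o$ remains exactly $T_n$, since the arms are disjoint half-lines emanating from the leaves and never re-entering any $n$-ball around $o$; by the earlier characterization of $\rad(K) \geq n$ in terms of this ball being isomorphic to the free tree of depth $n$, we obtain $\rad(K) \geq n$. Third, for every $r > 0$ and every vertex $w \in \Lambda$ with $|w| > R := n + r + 1$, the vertex $w$ lies strictly inside some arm, well away from the transition at its leaf, so that the $r$-ball around $w$ is canonically isomorphic to an $r$-ball in $\mathcal{Z}_{a_i}$ for the appropriate $i$. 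This already gives $\F_d/K$ locally-$(\mathcal{Z}_{a_1},\ldots,\mathcal{Z}_{a_d})$, which is stronger than (and hence implies) the required locally-$(\F_d/N, \mathcal{Z}_{a_1},\ldots,\mathcal{Z}_{a_d})$ condition. In this pure-$\mathcal{Z}_{a_i}$ construction the argument is essentially mechanical; the genuine technical obstacle appears only if one insists on having $\F_d/N$ feature nontrivially as a local model on a large part of $\Lambda$, in which case one must carefully align labels and orientations of dangling half-edges at the $T_n$--$\F_d/N$ junction, along the combinatorial bookkeeping carried out in \cite[\S 4.3]{HY18}.
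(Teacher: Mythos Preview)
Your argument for the stated lemma is correct: since the definition of locally-$(\F_d/N_1,\ldots,\F_d/N_m)$ only asks that each far-out $r$-ball match \emph{some} model, enlarging the list of models weakens the condition, and your tree-with-$\mathcal{Z}_{a_i}$-arms graph is indeed locally-$(\mathcal{Z}_{a_1},\ldots,\mathcal{Z}_{a_d})$, hence locally-$(\F_d/N,\mathcal{Z}_{a_1},\ldots,\mathcal{Z}_{a_d})$ as well. One small slip: ``the sole remaining outgoing $a_i$-half-edge'' presumes the parent edge at the leaf is the incoming $a_i$; when the leaf is reached via $a_i^{-1}$ it is the incoming half-edge that is dangling. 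This is cosmetic and the fix is obvious.

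That said, your route differs from the paper's in a way that matters downstream. The paper (following \cite[\S4.3]{HY18}) does not use $\F_d/N$ as a vacuous option: at every leaf whose parent edge is \emph{not} labeled by $a_1^{\pm 1}$, it removes the single oriented edge $(N,Na_1)$ from a fresh copy of $\Sch(N)$ and attaches the two resulting dangling half-edges to the leaf's outgoing $a_1$- and $a_1^{-1}$-half-edges. This is precisely what makes Lemma~\ref{lem:transient} go through: $\Sch(K)$ then contains a copy of $\Sch(N)$ with one edge replaced by a length-$2$ path, and transience of $N$ propagates to $K$ by monotonicity of effective resistance. Your simplified graph, by contrast, is a finite tree with finitely many half-$\Z$ rays (the loops do not affect effective resistance), so the resistance from the root to infinity is infinite and your $K$ is \emph{recurrent}. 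It therefore satisfies the lemma as written but cannot play the role this $K$ must play in the proof of Theorem~\ref{thm:free groups}, where transience of $\core_{\F_d}(K_1) \leq K_1$ is needed to invoke Lemma~\ref{lem:main lem}. Your parenthetical suggestion for incorporating $N$ (``with an appropriate ball removed'') is also not quite the right surgery: removing a ball leaves many dangling half-edges rather than the two one needs; the single-edge excision is the point.
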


There are many ways to construct subgroups $K$ with the required properties in Lemma \ref{lem:gluing}.
In \cite{HY18} on such construction is provided.  For our purposes we only require the existence of $K$
with these properties, and the specific construction is not important.

Although the proof of Lemma \ref{lem:gluing} 
is in \cite{HY18}, let us give an intuitive description of the construction the appears there.
We will describe the special case of $d=2$ and $\F_d = \IP{ a,b }$ (so $S=\{a,b\}$),
the more general $d \geq 2$ case being a straightforward adaptation.

We start with part of a Schreier graph by taking $T_n$ to be 
the depth-$n$, $4$-regular tree from the Cayley graph of $\F_2$
(\ie $B_n(1)$ in $\F_2$).
This graph $T_n$ has leaves, each leaf has one incoming edge labeled by some element from $\{ a,a^{-1} , b,b^{-1} \}$.

Now, suppose $g$ is some such leaf, and assume that the incoming edge from its ancestor 
in the depth-$n$ regular tree is labeled by $b$.  Thus, to complete this leaf into a Schreier graph,
we need to add $3$ outgoing edges with labels $a,a^{-1}$ and $b$.

For the outgoing edge labeled by $b$, we consider the following labeled graph $\Nn_g$:
The vertices of $\Nn_g$ are a copy of the set of natural numbers $\N$,
specifically the set $\{ 0_g, 1_g, 2_g , \ldots \}$.
For every $n \in \N$ we place an oriented edge from $n_g$ to $(n+1)_g$, labeled by $b$.
We also add a self loop labeled $a$ to each vertex $n_g$.  
Finally, we connect $\Nn_g$ to the finite tree $T_n$ by adding an oriented edge from $g$ to $0_g$ 
labeled by $b$.

For the pair of outgoing edges from $g$ labeled $a,a^{-1}$ we proceed differently:
Here we take a copy of the Cayley graph of $\F_2 / N$, called $\Lambda_g$.
In this graph there is an oriented edge $(N,Na)$ labeled by $a$.
We remove this edge from the graph, and connect what remains to the finite tree $T_n$
by adding to oriented edges: one from our leaf $g$ to $Na \in \Lambda_g$ labeled $a$,
and another from $N \in \Lambda_g$ to the leaf $g$ also labeled $a$ (so the edge in the reverse orientation from
$g$ to $N$ is labeled $a^{-1}$).
See Figure \ref{fig:one leaf}.

\begin{figure}[h]
\label{fig:one leaf}
\includegraphics[width=0.8\textwidth]{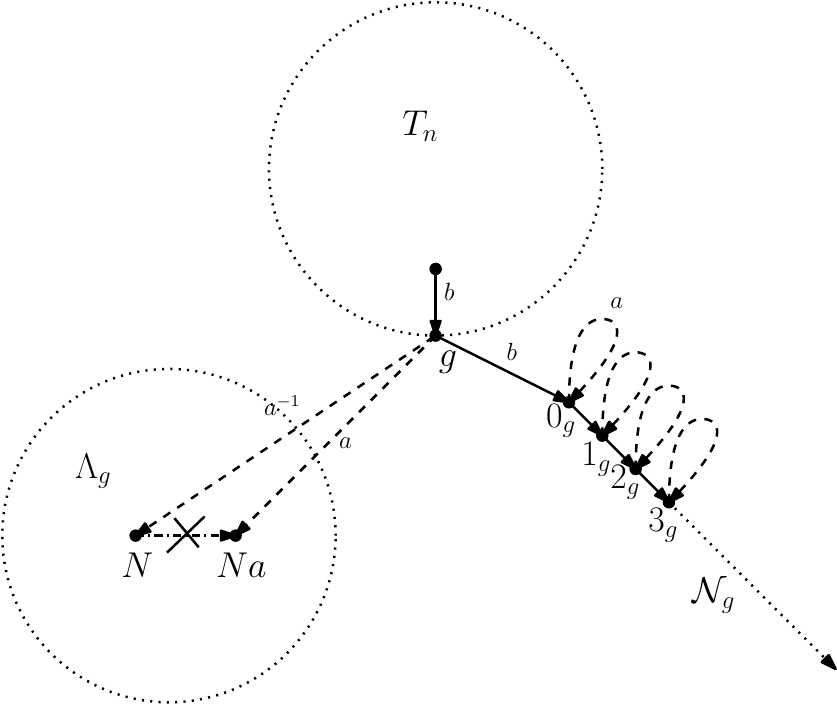}
\caption{Connecting a leaf $g$ of the finite tree $T_n$ to $\Lambda_g$ and $\Nn_g$.  
Solid arrows indicate oriented edges labeled by $b$
and dashed arrows indicted oriented edges labeled by $a$.  Note that the incoming edge to $g$
is labeled by $b$.  The outgoing edges from $g$ are labeled by $a,a^{-1},b$.
The edge from $N$ to $Na$ is removed.}
\end{figure}

One readily sees that any vertex in $\Nn_g$ and the copy of $\F_2/N$ connected to $g$
has the proper number of outgoing edges labeled by the generators $\{a,a^{-1},b,b^{-1}\}$.

Now, suppose that $g'$ is a different leaf of $T_n$ with the edge incoming from its ancestor labeled 
by $b^{-1}$.  We perform he same construction as above, except that the  
oriented edges that were labeled $b$ in $\Nn_g$
are now labeled by $b^{-1}$ in $\Nn_{g'}$.

Similarly, for a leaf $\gamma$ of $T_n$ with incoming edge labeled $s \in \{a,a^{-1} \}$, 
we connect a copy of $\N$ called $\Nn_\gamma$ whose loops are labeled by $b$
and non-loops by $s$, and we also connect $\Lambda_\gamma$, a copy of $\Sch(\F_2/N)$,
where the oriented edge $(N,Nb)$ is removed and oriented edges labeled $b$ from $N$ to $\gamma$
and from $\gamma$ to $Nb$ are added.

Performing these modifications for all leaves of $T_n$, we arrive at a Schreier graph of $\F_2$.
This graph looks like a finite tree, with copies of either $\N$ or $\F_2/N$ hanging of each leaf,
in such a way that copies of $\N$ are connected to a leaf by one edge, and copies of $\F_2/N$ 
are connected to a leaf by a pair of edges replacing one edge removed from the original copy of $\F_2/N$.
See Figure \ref{fig:gluing graph}.

\begin{figure}[h]
\label{fig:gluing graph}
\includegraphics[width=0.5\textwidth]{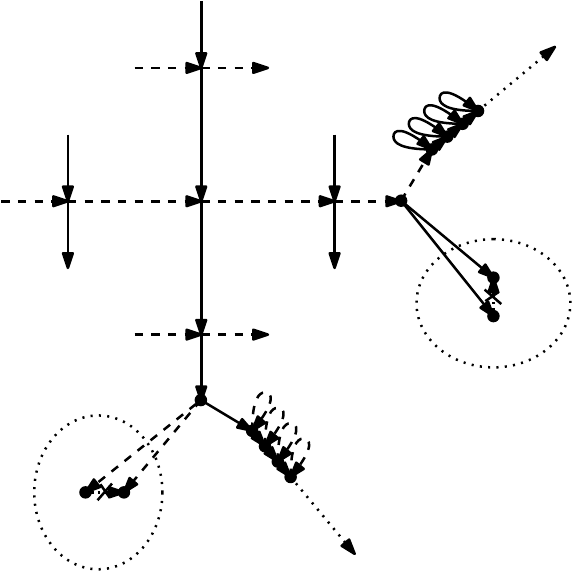}
\caption{Part of the Schreier graph for $n=2$ and $d=2$ in Lemma \ref{lem:gluing},
with the two leaves connected to the corresponding copies of $\N$ and $\F_2/N$.}
\end{figure}

It is not difficult to see that the subgroup $K$ corresponding to this Schreier graph 
has $\rad(K) \geq n$, and also that except for finitely many vertices, the balls in this Schreier 
graph are either balls in $\N$ or balls in $\F_2/N$.

%

\subsection{Transient subgroups}

Let $(X_t)_t$ be a random walk on $\F_d$ with law $\mu$.
Let $K \leq \F_d$ and consider the induced random walk $(K X_t)_t$
on $\F_d/ K$.  We say that $K$ is a \define{$\mu$-transient subgroup}
if $(K X_t)_t$ is a transient Markov chain.  That is, if 
$\Pr [ X_t \in K \ i.o. ] = 0$.
When $\mu$ is a symmetric random walk, the induced walk on $\F_d/K$ is 
a reversible Markov chain.  
One may then prove (see \eg
Chapter 3 of \cite{Woess}) that for the class of 
symmetric random walks with finite second moment,
either $K$ is a transient subgroup for all of them, or for none of them.

\begin{definition}
A subgroup $K \leq \F_d$ is called a \define{transient subgroup} if it is a $\mu$-transient subgroup for
some (and hence any) symmetric random walk $\mu$ on $\F_d$ with finite second moment.
\end{definition}

It is immediate that if $K \leq M \leq \F_d$ and $M$ is a transient subgroup, then also $K$ 
is a transient subgroup.

\begin{lemma}
\label{lem:transient}
Let $N \lhd \F_d$,
and consider the construction from Lemma \ref{lem:gluing} of
$K \leq \F_d$ such that $\rad(K) \geq n$ and $\F_d/K$ is 
locally-$(\F_d/N , \F_d/ K_{a_1} , \ldots, \F_d/ K_{a_d} )$.

If $N$ is a transient subgroup, then
$K$ is a transient subgroup as well.
\end{lemma}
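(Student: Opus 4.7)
The plan is to use the electric-network characterization of transience. By the definition of ``transient subgroup'', it suffices to exhibit one symmetric random walk $\mu$ on $\F_d$ of finite second moment for which $K$ is a $\mu$-transient subgroup. I take $\mu$ to be the simple random walk, uniform on $S \cup S^{-1}$. Under this choice, the induced chain on the vertex set of $\Sch(K)$ is the nearest-neighbor random walk on the underlying undirected graph (with bounded edge multiplicities), and its transience is equivalent to finite effective resistance from $K$ to infinity when all edges carry unit conductance.

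The heart of the argument is to exhibit inside $\Sch(K)$ an explicit subnetwork $\Gamma$ that inherits transience from $\Sch(N)$. Recall the construction in Lemma \ref{lem:gluing}: at each leaf $\ell$ of the depth-$n$ tree whose incoming edge is labeled by some $s_0 \notin \{a,a^{-1}\}$, a copy of $\Sch(N)$ is attached in which the single oriented edge $(N,Na)$ has been removed and replaced by the two edges $\ell \xrightarrow{a} Na$ and $\ell \xrightarrow{a^{-1}} N$ of $\Sch(K)$. Fixing one such leaf $\ell$, let $\Gamma \subseteq \Sch(K)$ be the union of $\ell$, the attached copy of $\F_d/N$, and these two gluing edges. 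As an unlabeled undirected graph, $\Gamma$ is precisely $\Sch(N)$ with the edge $(N,Na)$ subdivided into the length-two path through $\ell$.

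Transience is preserved under subdivision of a single edge: replacing one edge of unit resistance by a path of length two amounts (via series reduction) to increasing that edge's resistance from $1$ to $2$, and any unit flow of finite energy from a vertex to infinity on the original network remains a unit flow of finite energy on the perturbed network (the energy changes by at most the squared flow on that single edge). Since $N$ is a transient subgroup, $\Sch(N)$ has finite effective resistance from $N$ to infinity, and hence so does $\Gamma$ from the image of $N$ to infinity.

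I then conclude by Rayleigh monotonicity. Since $\Gamma$ is a subnetwork of $\Sch(K)$ sharing unit conductances on common edges, the effective resistance from the image of $N$ to infinity in $\Sch(K)$ is at most that in $\Gamma$, hence finite; composing with the finite series resistance along the unique tree path from $K$ to $\ell$ (plus the one additional edge from $\ell$ into the copy) yields finite effective resistance from $K$ to infinity in $\Sch(K)$. This gives $\mu$-transience of $K$ and hence that $K$ is a transient subgroup. The only real subtlety is the combinatorial verification that the subgraph $\Gamma$ carved out of $\Sch(K)$ really is the claimed edge-subdivision of $\Sch(N)$, but this is immediate from the explicit gluing recipe, so I do not anticipate a substantial obstacle.
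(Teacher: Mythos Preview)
Your proof is correct and follows essentially the same approach as the paper's: identify inside $\Sch(K)$ a copy of $\Sch(N)$ with one edge subdivided into a path of length two, observe that this modification preserves transience, and then invoke Rayleigh monotonicity to conclude that $\Sch(K)$ is transient. The paper's version is terser---it simply cites ``network theory'' for the last two steps---but the content is the same.
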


\begin{proof}
Transience of a subgroup $K$ is equivalent to the induced simple random walk on the 
Schreier graph $\Sch(K)$ being transient.
In the construction of Lemma \ref{lem:gluing}, 
the Schreier graph of $K$ is obtained from the graph $\Sch(N)$ by removing one edge
from $\Sch(N)$, and adding a lot of other edges and vertices.
Specifically, embedded in $\Sch(K)$, 
there is a copy of $\Sch(N)$ with one edge removed and a path of length $2$
added in place of that edge.
Network theory (see \eg \cite[Chapter 2]{Woess} or \cite{LyonsPeres}) tells us that since this (modified)
copy of $\Sch(N)$ is transient, also $\Sch(K)$ is transient.
\end{proof}

It is very well known (see \eg \cite{LPW, LyonsPeres, Woess}) that a Markov chain is transient 
if and only if the corresponding {\em Green function} converges.
Specifically, given a $\mu$-random walk $(X_t)_t$ on $\F_d$, for a subgroup $K \leq \F_d$ we write 
$$ \gr_K^{m+}(\alpha,\beta) = \sum_{t=m}^\infty \Pr [ K X_t = \alpha \ | \ K X_0 = \beta ]  $$
for an integer $m \geq 0$.  
By the notation $\Pr [ K X_t = \alpha \ | \ K X_0 = \beta ]$ we refer 
to $\Pr [ K X_t = \alpha \ | \  X_0 = g ] = \Pr_g [ K X_t = \alpha ]$ for some $g \in G$ such that $Kg = \beta$.
One may check that this does not depend on the specific choice of representative $g$.

We also set $\gr_K = \gr_K^{0+}$.
As mentioned, 
$\gr_K(\alpha,\beta) < \infty$ if and only if $K$ is a $\mu$-transient subgroup
(finiteness does not depend on the specific choice of $\alpha,\beta$, although the actual value
of $\gr_K(\alpha,\beta)$ does).
We also use the notation 
$$ \gr_K^{m+}(\alpha,B) = \sum_{\beta \in B} \gr_K^{m+}(\alpha,\beta)  $$
for a subset $B \subset G/K$.

Let us now record some properties of Green functions on the Schreier (Cayley) graphs of the 
normal subgroups of $\F_d$.

Let $(X_t)_t$ denote the $\mu$-random walk, for 
$\mu$ an adapted, symmetric measure on $\F_d$,
with finite first moment.  
Let $N \lhd C \lhd \F_d$ be normal subgroups.

First note that since $N \lhd C$,
we have that $\Pr [ N x X_t = N y ] \leq \Pr [ C x X_t = C y ]$, so
\begin{align*}
\gr_{N}^{m+}(N x , N y) & = \sum_{t \geq m} \Pr [ N x X_t = N y ] 
\leq \gr_{C}^{m+} ( C x , C y) .
\end{align*}
Using the fact that $\F_d / C$ is a group, it is well known  that
\begin{align*}
\Pr [ C x X_{2t} = C y ] & \leq \sqrt{ \Pr [ C x X_{2t} = C x  ] \cdot 
\Pr [ C y X_{2t} = C y ] }  = \Pr [ C X_{2t} = C ] \\
\Pr [ Cx X_{2t+1} = Cy ] & \leq \sqrt{ \Pr [ C x X_{2t+2} = C x  ] \cdot
\Pr [ C y X_{2t} = C y ] } \leq \Pr [ C X_{2t} = C ] \\
\end{align*}
(see \eg \cite{Woess}, this is a simple Cauchy-Schwarz application),
so that
\begin{align} \label{eqn:CS for Green}
\gr_{C}^{m^+} (C x , C y) & 
\leq 2 \sum_{t \geq m-1} \Pr [ X_{2t} \in C ] 
\leq 2 \gr^{(m-1)+} (1,C) , 
\end{align}
where $\gr$ is the Green function of the $\mu$-random walk $(X_t)_t$ on $\F_d$.

Also, 
for any $k>0$,
denote $A_N  = \{ y \in \F_d \ | \ |N y| \leq k \}$ and $A_C = \{ y \in \F_d \ | \ |Cy| \leq k \}$
(which is the inverse image in the group $\F_d$, of the ball of radius $k$ around the root in $\Sch(K), \Sch(C)$,
respectively).
Since $N \lhd C$, we have that $A_N \subset A_C$.
Thus, 
\begin{align} \label{eqn:monotone for balls}
\gr_{N}^{m+}(N x , B_{\F_d/N}(N, k) ) & = \gr^{m+} (x , A_N) \leq \gr^{m+} (x, A_C) = 
\gr_{C}^{m+} (C x, B_{\F_d/C}(C, k) ) . 
\end{align}

\subsection{Full entropy approximation}

We now move to the proof of Theorem \ref{thm:free groups}.
The final step is in the following lemma.

\begin{lemma} \label{lem:main lem}
Let $(C_n)_n$ be a non-increasing sequence $C_{n+1} \lhd C_n \lhd \F_d$ of normal subgroups
such that $\rad(C_n) \geq n$ for each $n$.
Let $\mu$ be a symmetric random walk on $\F_d$
with finite second moment, and assume that $C_1$
is a $\mu$-transient subgroup.  
Then, 
$$ \lim_{n \to \infty} h(\F_d / C_n , \bar \mu ) = h( \F_d , \mu) . $$
\end{lemma}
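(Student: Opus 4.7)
The plan is to prove $\liminf_{n\to\infty} h(\F_d/C_n, \bar\mu) \geq h(\F_d, \mu)$; the reverse inequality is automatic since $C_n X_t$ is a deterministic function of $X_t$. The main tool is the Kaimanovich--Vershik type identity
\[
h(G, \nu) \;=\; H(X_1) \;-\; H(X_1 \mid \Tt_G),
\]
valid for any finite-entropy random walk on a countable group, with $\Tt_G = \bigcap_t \sigma(X_t, X_{t+1}, \ldots)$ the tail $\sigma$-algebra. (This follows from $H(X_t)-H(X_{t-1}) = H(\nu) - H(X_1 \mid X_t)$, the Markov identity $H(X_1 \mid X_t) = H(X_1 \mid \sigma(X_t, X_{t+1}, \ldots))$, and reverse-martingale convergence; finite entropy of $\mu$ on $\F_d$ follows automatically from finite first moment by a Gibbs-type bound using $|\cdot|$ as length function.) Applied to $(X_t)_t$ on $\F_d$ and to the quotient walk $Y_t := C_n X_t$ on $\F_d/C_n$, this yields $h(\F_d/C_n, \bar\mu) = H(C_n X_1) - H(C_n X_1 \mid \Tt^{(n)})$, where $\Tt^{(n)}$ is the tail $\sigma$-algebra of $(Y_t)_t$.

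The argument proceeds via three ingredients. \emph{First,} $H(C_n X_1) \to H(\mu)$: since $\rad(C_n) \geq n$ forces every nontrivial $g \in C_n$ to satisfy $|g| > 2n$, the coset projection $\F_d \to \F_d/C_n$ is injective on the ball $B(1,n)$, so the induced partitions of $\F_d$ refine to the discrete partition and standard entropy continuity (using $H(\mu) < \infty$) gives the limit. \emph{Second,} Proposition~\ref{prop:prefix gives boundary} yields $H(X_1 \mid \pref_r(X_\infty)) \searrow H(X_1 \mid \Tt_{\F_d})$, so for any $\eps > 0$ we may fix $r$ with $H(X_1 \mid \pref_r(X_\infty)) < H(X_1 \mid \Tt_{\F_d}) + \eps$. \emph{Third,} and this is the crux, for $r$ fixed and $n$ large,
\[
H(C_n X_1 \mid \Tt^{(n)}) \;\leq\; H(X_1 \mid \pref_r(X_\infty)) + o_n(1).
\]

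For the crux, since $C_n X_1$ is a function of $X_1$, $H(C_n X_1 \mid \Tt^{(n)}) \leq H(X_1 \mid \Tt^{(n)})$. Each $C_n$ is a transient subgroup (by $C_n \leq C_1$ and monotonicity of transience under inclusion), so $(Y_t)_t$ is transient on $\Sch(C_n)$, and since $\rad(C_n) \geq n \geq r$ the ball of radius $r$ in $\Sch(C_n)$ is a tree. Hence $\pref_r(Y_t)$ stabilizes a.s.\ to a $\Tt^{(n)}$-measurable limit $\pref_r(Y_\infty^{(n)})$, giving $H(X_1 \mid \Tt^{(n)}) \leq H(X_1 \mid \pref_r(Y_\infty^{(n)}))$. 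To compare this with $\pref_r(X_\infty)$, choose an intermediate radius $R = R(n)$ with $r \leq R$, $R^3 \leq n$, and $R \to \infty$: Lemma~\ref{lem:large radius for prefix} gives $\pref_r(C_n X_{T_R}) = \pref_r(X_{T_R})$ off an event of probability $o_n(1)$, while transience of both walks, together with the Green-function comparisons~(\ref{eqn:CS for Green}) and (\ref{eqn:monotone for balls}) controlling returns to depth $\leq r$ uniformly in $n$, yields $\pref_r(X_{T_R}) = \pref_r(X_\infty)$ and $\pref_r(Y_{T_R}^{(n)}) = \pref_r(Y_\infty^{(n)})$ off events of probability $o_R(1)$. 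Since $\pref_r$ is finite-valued, the resulting convergence in distribution $\pref_r(Y_\infty^{(n)}) \to \pref_r(X_\infty)$, combined with $H(X_1) < \infty$, gives $H(X_1 \mid \pref_r(Y_\infty^{(n)})) \to H(X_1 \mid \pref_r(X_\infty))$.

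Putting the three ingredients together gives $\liminf_n h(\F_d/C_n, \bar\mu) \geq h(\F_d, \mu) - 3\eps$ for arbitrary $\eps > 0$, which completes the proof. The principal obstacle is the prefix-comparison step of the crux: one must quantitatively control, uniformly in $n$, the probability that the quotient walk $Y^{(n)}$ returns to the depth-$r$ subtree of $\Sch(C_n)$ after exiting the depth-$R$ subtree. Transience (inherited from $C_1$) is essential here, and the visit-count estimate of Lemma~\ref{lem:speed}---the source of the $4$-th moment hypothesis via its $O(R^3)$ bound---is exactly what makes the Green-function comparisons~(\ref{eqn:CS for Green}) and (\ref{eqn:monotone for balls}) quantitative in a manner adequate for the uniform control.
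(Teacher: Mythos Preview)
Your proposal is correct and follows essentially the same approach as the paper: both use the Kaimanovich--Vershik identity, bound the tail conditional entropy via $\pref_r$ of the limit, and compare $\pref_r(C_n X_\infty)$ with $\pref_r(X_\infty)$ through an intermediate exit time using Lemma~\ref{lem:large radius for prefix} together with the Green-function estimates (the paper packages the latter comparison as Lemma~\ref{lem:Zinfinity and ZTn} and then applies Fano's inequality). One small correction: in the final step you invoke ``convergence in distribution'' of $\pref_r(Y_\infty^{(n)})$ to deduce convergence of $H(X_1 \mid \pref_r(Y_\infty^{(n)}))$, but convergence in distribution of the conditioning variable alone does not yield this---what you have actually established is the stronger statement $\Pr[\pref_r(Y_\infty^{(n)}) \neq \pref_r(X_\infty)] \to 0$, and it is this (via Fano, exactly as the paper does) that gives $H(X_1 \mid \pref_r(Y_\infty^{(n)})) \leq H(X_1 \mid \pref_r(X_\infty)) + o_n(1)$.
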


The proof of this lemma is in Section \ref{scn:proof of main lem}.

\begin{remark}
In light of this lemma, 
it is natural to ask whether it suffices to have a non-increasing sequence 
$C_{n+1} \lhd C_n \lhd \F_d$ such that $\bigcap_n C_n = \{1\}$ in order to deduce that 
$h(\F_d / C_n , \bar \mu) \to h( \F_d , \mu)$.  

However, this is not the case, since $\F_d$ is residually finite, which just means that we can find 
$C_{n+1} \lhd C_n \lhd \F_d$ and $\bigcap_n C_n = \{1\}$ as above with $[\F_d : C_n ] <\infty$ for all $n$. 

So in order to get the entropies $h(\F_d / C_n , \bar \mu)$ approaching the full entropy $h(\F_d,\mu)$,
we require some kind of stronger condition on the $C_n$'s becoming small.
One such condition is exactly the content of Lemma \ref{lem:main lem}.
\end{remark}

With Lemma \ref{lem:main lem}, we are ready to prove Theorem \ref{thm:free groups}.

\begin{proof}[Proof of Theorem \ref{thm:free groups}]
Let $N = [\F_d , [\F_d,\F_d] ]$.  So $\F_d / N$ is nilpotent, but not virtually Abelian.
For a generator $s \in \{a_1, \ldots, a_d\}$ of $\F_d$, let $K_s = \ker(\pi_s)$ 
where $\pi_s : \F_d \to \Z$ is the surjective homomorphism mapping $\pi_s(s) = 1$.

For each $n$ use Lemma \ref{lem:gluing} to construct a subgroup $K_n' \leq \F_d$
such that $\rad(K_n') \geq n$, and such that $\F_d / K'_n$ is 
locally-$(\F_d / N , \F_d / K_{a_1} , \ldots, \F_d/  K_{a_d})$.
Set $K_n = \bigcap_{j \leq n} K_j'$.  
(We do this because we require a non-increasing sequence of subgroups
to apply Lemma \ref{lem:main lem}.)
By Corollary \ref{cor:locally nilpotent},  we have that 
$N \lhd \core_\emptyset(K_n)$ for all $n$, implying that 
$\F_d / \core_\emptyset(K_n)$ is (at most $2$-step) nilpotent.
By the Choquet-Deny Theorem \cite{CD, Raugi}, $h(\F_d/ \core_\emptyset(K_n) , \bar \mu) = 0$,
so $p \mapsto h(\F_d , \mu, \lambda_{p,K_n} )$ is a continuous function by 
Proposition \ref{prop:cont entropy}.

Note that since $\F_d/ N$ is not virtually Abelian, $N$ cannot be a recurrent subgroup,
for any symmetric random walk $\mu$ with finite second moment
(see \eg \cite[Chapter 3.B]{Woess}).  Thus, the construction in Lemma \ref{lem:gluing}
guaranties that $K_1'$ is also a transient subgroup, by Lemma \ref{lem:transient}.
Since $\core_{\F_d}(K_1) \lhd  K_1 = K'_1$, we can apply Lemma \ref{lem:main lem}
to the sequence $C_n = \core_{\F_d} (K_n)$, to obtain that 
$$ \lim_{n \to \infty} h( \F_d / C_n , \bar \mu ) = h( \F_d , \mu ) . $$
This completes the proof.
\end{proof}

\section{Proof of Lemma \ref{lem:main lem}}

\label{scn:proof of main lem}

The following lemma's proof was inspired by the work of Stankov \cite{Stankov}.

\begin{lemma} \label{lem:Stankov}
Let $G$ be a finitely generated group, with some fixed finite generating set $S$.
Let $K \leq G$ be a subgroup and consider $\Gamma = \Sch(K)$, the Schreier graph of $K$
(with respect to the generating set $S$).
Let $\mu$ be a symmetric, adapted measure on $G$ with finite first moment,
and let $(X_t)_t$ denote the corresponding random walk.

Let $A \subset \Gamma$ be some finite subset, and consider the connected components of 
$\Gamma \setminus A$ in the Schreier graph.
Let $r(A) = \max \{ |\beta| \ : \ \beta \in A \}$ 
(recalling that $|\beta|$ is the distance from $\beta$ to the root in $\Gamma$).

For $\alpha \in \Gamma$ let $\Ee_t(\alpha)$ denote the event that $\alpha X_t$ and $\alpha X_{t+1}$ are 
both not in $A$, and are each in different connected components of $\Gamma \setminus A$.

Then, for any integer $m \geq 0$,
$$ \sum_{t \geq m} \Pr [ \Ee_t(\alpha) ] \leq |A| \cdot \E \big[ |X_1| \cdot 
\gr_K^{m+}(\alpha, B(K, r(A)+|X_1| ) ) \big] $$
(where $B(K,r)$ is the ball of radius $r$ about $K$ in $\Gamma$).
\end{lemma}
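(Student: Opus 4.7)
The plan is to interpret $\Ee_t(\alpha)$ geometrically as a path-crossing event and then reduce the statement to a union bound combined with the definition of the Green function. Conditional on the increment $U_{t+1} = u$, fix any geodesic factorisation $u = s_1 s_2 \cdots s_L$ in $S \cup S^{-1}$ with $L = |u|$. The intermediate cosets
$$ Y_j \;:=\; \alpha X_t \cdot s_1 s_2 \cdots s_j, \qquad 0 \leq j \leq L, $$
trace a path of length $L$ in $\Gamma$ from $Y_0 = \alpha X_t$ to $Y_L = \alpha X_{t+1}$. On the event $\Ee_t(\alpha)$ the endpoints lie in two different components of $\Gamma \setminus A$; in particular neither endpoint is in $A$, but the path must visit $A$, so there exist $1 \leq j \leq L-1$ and $\beta \in A$ with $Y_j = \beta$, equivalently $\alpha X_t = \beta \cdot (s_1 \cdots s_j)^{-1}$.

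A union bound over the pair $(\beta, j)$ then gives
$$ \mathbf{1}_{\Ee_t(\alpha)} \;\leq\; \sum_{\beta \in A} \sum_{j=1}^{L-1} \mathbf{1}\!\left[\alpha X_t = \beta \cdot (s_1 \cdots s_j)^{-1}\right]. $$
Taking expectations, conditioning on $U_{t+1}$, and summing over $t \geq m$ converts the right-hand side into Green function values,
$$ \sum_{t \geq m} \Pr[\Ee_t(\alpha)] \;\leq\; \sum_{u} \mu(u) \sum_{j=1}^{|u|-1} \sum_{\beta \in A} \gr_K^{m+}\!\big(\alpha,\, \beta u_j^{-1}\big), $$
with $u_j := s_1 \cdots s_j$. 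Here I use reversibility of the induced Markov chain on $\Gamma$ (which holds because $\mu$ is symmetric, forcing $p(\gamma,\delta) = p(\delta,\gamma)$ on the Schreier graph) to identify $\sum_{t \geq m} \Pr[\alpha X_t = \gamma]$ with $\gr_K^{m+}(\alpha,\gamma)$.

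The final ingredient is a trivial distance estimate: travelling from $\beta$ to $\beta u_j^{-1}$ in $\Gamma$ takes at most $j \leq |u|$ edges, and $|\beta| \leq r(A)$ by definition of $r(A)$, so $|\beta u_j^{-1}| \leq r(A) + |u|$ and $\beta u_j^{-1} \in B(K, r(A) + |u|)$. Bounding every Green term by $\gr_K^{m+}\!\big(\alpha, B(K, r(A) + |u|)\big)$ contributes the factor of $|A|$ from the sum over $\beta$ and at most $|u|$ from the sum over $j$; re-expressing the result as an expectation over $X_1 = U_1 \sim \mu$ yields exactly the stated inequality. There is no genuine obstacle here---the whole proof is a union bound, Fubini, and the Green-function identity; the only bookkeeping that needs care is the Schreier-graph convention (right cosets under the right action) so that the word $s_1 \cdots s_L$ does trace the claimed path in $\Gamma$ starting from $\alpha X_t$.
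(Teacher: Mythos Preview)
Your proof is correct and follows essentially the same route as the paper's: trace a geodesic path in $\Gamma$ for the increment $U_{t+1}$, observe that on $\Ee_t(\alpha)$ this path must pass through $A$, union-bound over the hitting vertex $\beta$ and the position $j$ along the word, sum over $t\geq m$ to produce the Green function, and then use the trivial distance estimate $|\beta u_j^{-1}|\leq r(A)+|u|$ to land in the ball. One small remark: the appeal to reversibility is unnecessary, since $\sum_{t\geq m}\Pr[\alpha X_t=\gamma]=\gr_K^{m+}(\alpha,\gamma)$ is just the definition of the Green function for the induced walk on $\Gamma$ started from $\alpha$ (and $U_{t+1}$ is independent of $X_t$), though invoking symmetry does no harm.
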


\begin{proof}
For any $u \in G$ choose some generators (and inverses)   $s_j(u) \in S \cup S^{-1}$ 
so that $u=s_1(u)s_2(u) \cdots s_{|u|}(u)$.
Also denote $u_0=1$ and $u_j=s_1(u) \cdots s_j(u)$ for all $j=1,\ldots ,|u|$. 
Set
$$ Y(\alpha,u)=\sum_{j=1}^{|u|} \1{ \alpha u_j \in A } . $$
Write $U_t = X_{t-1}^{-1} X_t$, so that $(U_t)_{t \geq 1}$ are i.i.d.-$\mu$ elements.
Note that 
$$ \Ee_t(\alpha) \subseteq \{Y(\alpha X_t, U_{t+1}) \geq 1\} . $$
Hence by Markov's inequality,
$$ \Pr [ \Ee_t(\alpha)) ] \leq \E [ Y(\alpha X_t,U_{t+1}) ] \leq 
\sum_{u \in G} \mu(u) \sum_{j=1}^{|u|} \sum_{\beta \in A} \Pr [ \alpha X_t=\beta u_j^{-1} ] , $$
and thus,
\begin{align*}
\sum_{t \geq m} \Pr [ \Ee_t(\alpha) ] & \leq \sum_{u \in G} \mu(u) \cdot \sum_{\beta \in A} \sum_{j=1}^{|u|} 
\gr_K^{m+}(\alpha,\beta u_j^{-1}) \\
& \leq |A| \cdot \sum_{u \in G} \mu(u)  |u| \cdot \sum_{|\beta| \leq r(A)+|u|} \gr_K^{m+} (\alpha,\beta)
\\
& = |A| \cdot \E \big[ |X_1| \cdot \gr_K^{m+}(\alpha, B(K, r(A)+|X_1| ) ) \big] 
\end{align*}
\end{proof}

Recall the definition of $\pref_r$ for Schreier graphs of the free group.

\begin{corollary} \label{cor:change prefix}
Let $K \leq \F_d$ with $\rad(K) \geq n$.  Let $\Gamma = \Sch(K)$ be the Schreier graph of $K$.
Let $B_r = B(K,r)$ be the ball of radius $r$ about the root in $\Gamma$.

Let $Z_t = K X_t$, where $(X_t)_t$ is a $\mu$-random walk, for some symmetric, adapted measure $\mu$ 
on $\F_d$, with finite first moment.

Then, for every $r \leq n$ and any $m \geq 0$,
$$ \sum_{t \geq m} \Pr \big[ 
\pref_r(Z_t) \neq \pref_r(Z_{t+1}) \big]
\leq |B_r| \cdot \E \big[ |X_1| \cdot \gr_K^{m+}(K, B_{r+|X_1|} ) \big] . $$
\end{corollary}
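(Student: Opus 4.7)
The plan is to apply Lemma \ref{lem:Stankov} with $\alpha = K$ (the root of $\Gamma$) and $A = B_r$; with these choices $|A| = |B_r|$ and $r(A) = r$, so the right-hand side of that lemma coincides exactly with the bound we want. What needs to be checked is that the event $\{\pref_r(Z_t) \neq \pref_r(Z_{t+1})\}$ implies the quantity $Y(Z_t, U_{t+1}) \geq 1$ appearing in the proof of Lemma \ref{lem:Stankov}, namely that on this event the edge-path $Z_t = Z_t u_0, Z_t u_1, \ldots, Z_t u_{|U_{t+1}|} = Z_{t+1}$ traced out in $\Gamma$ visits $B_r$ at some index $j \geq 1$.

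The geometric input is that $\rad(K) \geq n \geq r$ forces $B_{r+1}$ in $\Sch(K)$ to be isomorphic to the depth-$(r+1)$ ball in the Cayley graph of $\F_d$, hence a finite tree. In particular, each connected component of $\Gamma \setminus B_r$ is a subtree attached to the rest of $\Gamma$ through a unique ``gateway'' sphere-vertex at depth $r$, and $\pref_r$ is constant on each such subtree, taking as value the element of $\F_d$ of length $r$ representing the gateway.

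I argue by contrapositive. Assume $Z_t u_j \notin B_r$ for every $1 \leq j \leq |U_{t+1}|$. Then $Z_t u_1, \ldots, Z_{t+1}$ all lie in a single subtree component of $\Gamma \setminus B_r$, so their $r$-prefixes agree. If in addition $Z_t \notin B_r$, the adjacency of $Z_t$ and $Z_t u_1$ places $Z_t$ in the same subtree, whence $\pref_r(Z_t) = \pref_r(Z_{t+1})$. If instead $Z_t \in B_r$, then the edge $(Z_t, Z_t u_1)$ is a bridge from $B_r$ into the subtree containing $Z_t u_1$; the tree structure of $B_{r+1}$ forces $Z_t$ to be the gateway sphere-vertex of that subtree, and so $\pref_r(Z_t)$ again equals the common prefix of the subtree, namely $\pref_r(Z_{t+1})$. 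Either way no prefix change occurs.

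With this implication in hand, the claimed bound follows immediately by repeating the Markov-inequality computation from the proof of Lemma \ref{lem:Stankov} verbatim. The only genuinely delicate step is the boundary sub-case $Z_t \in B_r$ with $|U_{t+1}| = 1$ (so the walk exits $B_r$ in a single generator step and the intermediate path is empty); it is precisely there that the hypothesis $\rad(K) \geq n \geq r$ is used in an essential way, via the tree-ball isomorphism, to identify $Z_t$ as a gateway vertex and thus rule out a prefix change.
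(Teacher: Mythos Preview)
Your argument is correct and follows the same route as the paper: apply Lemma~\ref{lem:Stankov} with $A = B_r$ and $\alpha = K$. The paper's version is terser, simply asserting that a change of $r$-prefix forces $Z_t$ and $Z_{t+1}$ into different components of $\Gamma \setminus B_r$ and then invoking the lemma as a black box; you instead go inside the lemma and show directly that a prefix change forces $Y(Z_t,U_{t+1})\geq 1$, handling the boundary case $Z_t\in B_r$ explicitly. This extra care is not wasted: the paper's one-line implication is slightly ambiguous when one of $Z_t,Z_{t+1}$ lies in $B_r$, and your contrapositive argument cleanly resolves that ambiguity.

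One small imprecision: you assert that $\rad(K)\geq n\geq r$ makes $B_{r+1}$ isomorphic to the corresponding ball in $\F_d$, but the relevant proposition only gives this for $B_r$; when $r=n$ the ball $B_{r+1}$ need not be a tree. This does not damage your argument, however. The conclusion you actually need---that if $|Z_t|=r$ and $Z_t u_1\notin B_r$ then $Z_t$ is the gateway vertex of the component containing $Z_t u_1$---follows directly from the cut-point proposition (valid under $\rad(K)\geq r$): take a geodesic from $Z_t$ to the root, prepend the edge from $Z_t u_1$ to $Z_t$, and observe that the unique depth-$r$ vertex on this path is $Z_t$ itself. Your remark singling out $|U_{t+1}|=1$ as the delicate sub-case is harmless but unnecessary; the same gateway reasoning covers all $|U_{t+1}|\geq 1$ uniformly.
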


\begin{proof}
Note that if $\pref_r(Z_t) \neq \pref_r(Z_{t+1})$ then $Z_t, Z_{t+1}$ must be in different components of 
$\Gamma \setminus B_r$. This is because $\rad(K) \geq n \geq r$. 
Taking $\alpha = K$ in Lemma \ref{lem:Stankov} completes the proof.
\end{proof}

\begin{corollary}
\label{cor:pref at infinity}
Let $K \leq \F_d$ with $\rad(K) \geq n$.  Let $\Gamma = \Sch(K)$ be the Schreier graph of $K$.
Let $(X_t)_t$ denote a $\mu$-random walk, for some symmetric, adapted measure $\mu$ 
on $\F_d$, with finite first moment.
Assume that $K$ is $\mu$-transient.

Then, for any $r \leq n$ we have that 
$\pref_r(K X_\infty) : = \lim_{t \to \infty} \pref_r(K X_t)$ is well defined (as an element of $B_{\F_d}(1,r)$).
\end{corollary}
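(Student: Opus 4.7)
The sequence $(\pref_r(Z_t))_{t\geq 0}$, with $Z_t = KX_t$, takes values in the finite set $B_{\F_d}(1,r)$. So the existence of $\lim_{t\to\infty}\pref_r(Z_t)$ is equivalent to the sequence being eventually constant almost surely, and it therefore suffices to establish
\[
\Pr\bigl[\pref_r(Z_t)\neq\pref_r(Z_{t+1}) \text{ infinitely often}\bigr]=0.
\]
Once this is in hand, I would simply define $\pref_r(KX_\infty)$ to be the a.s.\ limit.

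My plan is to apply the preceding Corollary \ref{cor:change prefix} with a general tail parameter $m$, which yields
\[
\Pr\bigl[\exists\, t\geq m\colon \pref_r(Z_t)\neq\pref_r(Z_{t+1})\bigr]\leq \sum_{t\geq m}\Pr[\pref_r(Z_t)\neq\pref_r(Z_{t+1})] \leq |B_r|\cdot \E\bigl[|X_1|\cdot \gr_K^{m+}(K, B_{r+|X_1|})\bigr].
\]
Since $K$ is $\mu$-transient, for each fixed value $\ell=|X_1|$ the set $B_{r+\ell}$ is finite (contained in the homomorphic image of the ball of radius $r+\ell$ in $\F_d$), and $\gr_K(K, B_{r+\ell}) < \infty$ as a finite sum of finite Green-function values. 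Thus the monotone tail satisfies $\gr_K^{m+}(K, B_{r+\ell}) \downarrow 0$ as $m\to\infty$ for each $\ell$. Provided a dominating integrable function is available, dominated convergence then sends the displayed right-hand side to $0$ as $m\to\infty$, and continuity of probability on the nested events $\{\exists t\geq m : \pref_r(Z_t)\neq \pref_r(Z_{t+1})\}$ yields the required Borel--Cantelli-type conclusion.

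The main technical obstacle is the integrability of a dominant, namely $\E[|X_1|\cdot \gr_K(K, B_{r+|X_1|})] < \infty$. Transience gives pointwise finiteness for each realization of $|X_1|$, but one must also control how $\gr_K(K, B_{r+\ell})$ grows in $\ell$ and match this growth against the finite first moment of $\mu$. I expect the control to come from the non-amenability of $\F_d$ (descended to the quotient random walk on $\Sch(K)$) together with the hypothesis $\rad(K)\geq n\geq r$, which forces the ball $B_r$ to look like a piece of the regular tree and so imposes quantitative spectral bounds on nearby balls in $\Sch(K)$. These estimates should give a polynomial-in-$\ell$ bound on $\gr_K(K, B_{r+\ell})$, which is amply integrable against $|X_1|$'s first-moment-finite distribution. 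Once integrability is secured, the reduction above closes the proof with no further subtlety.
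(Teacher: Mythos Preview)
Your overall strategy matches the paper's: feed Corollary~\ref{cor:change prefix} into Borel--Cantelli. But the detour through $m\to\infty$ and dominated convergence buys nothing. The natural dominant for DCT is precisely the $m=0$ integrand $|X_1|\cdot \gr_K(K,B_{r+|X_1|})$; if that is integrable, then already the $m=0$ bound gives $\sum_{t\geq 0}\Pr[\pref_r(Z_t)\neq\pref_r(Z_{t+1})]<\infty$ and Borel--Cantelli applies directly, while if it is not integrable, DCT is unavailable and your argument stalls at the same point. The paper's one-line proof simply asserts this finiteness (``Since $K$ is $\mu$-transient, $\E\gr_K(K,B_{r+|X_1|})<\infty$'') and invokes Borel--Cantelli.

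You correctly flag integrability as the crux, but your proposed resolution does not work, for two independent reasons. First, non-amenability of $\F_d$ does \emph{not} descend to $\Sch(K)$: a Schreier graph of a non-amenable group can perfectly well be amenable (in this very paper, $\Sch(K_s)\cong\mathcal{Z}_s$ is $\Z$ with loops). The hypothesis $\rad(K)\geq n$ only controls the ball of radius $n$, so once $|X_1|>n-r$ it says nothing about $B_{r+|X_1|}$, and no ``spectral bounds on nearby balls'' are available from the stated assumptions. Second, even if you had a polynomial bound $\gr_K(K,B_{r+\ell})=O(\ell^k)$ with $k\geq 1$, the integrand is $|X_1|\cdot O(|X_1|^k)$, whose finiteness requires a $(k{+}1)$-th moment, not a first moment; ``amply integrable against $|X_1|$'s first-moment-finite distribution'' is simply false. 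Compare Lemma~\ref{lem:speed} and Remark~\ref{rem:correct bound for green function}: even under the extra hypothesis of exponential growth the paper only obtains $O(\ell^3)$, and this is exactly why the main theorems carry a fourth-moment assumption.

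In the paper's later applications this corollary is only invoked for $K=\{1\}$ or for the normal subgroups $C_j$, where exponential growth of the quotient plus the fourth-moment hypothesis make Lemma~\ref{lem:speed} available and integrability genuinely holds. Under the bare hypotheses stated here (arbitrary transient $K$, finite first moment), neither the paper's terse assertion nor your sketch supplies the missing bound.
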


\begin{proof}
Since $K$ is $\mu$-transient, 
$\E \gr_K(K, B_{r+|X_1|}) < \infty$,
so by the previous corollary and Borel-Cantelli 
the sequence of elements  $(\pref_r(K X_t) )_t$ stabilizes eventually a.s.
\end{proof}

\begin{lemma} \label{lem:speed}
Let $C \lhd \F_d$ with $\rad(C) \geq 1$.
Assume that $C$ is $\mu$-transient,  for some symmetric, adapted measure $\mu$ 
on $\F_d$, with finite first moment.  

Then, there exists a constant $M = M_\mu >0$ such that for all $r>0$,
$$ \gr_{C} (Cx, B_{\F_d/C}(C,r) ) \leq M r .   $$
\end{lemma}

\begin{proof}
Since $\rad(C) \geq 1$, by Proposition \ref{prop:large rad(K)} 
the Cayley graph of $\F_d/C$ (which is $\Sch(C)$) has at least $2d$ topological ends.
Stalling's Theoreom (see \cite[Section 2]{DrutuKap} or \cite[Chapter 3]{GaborBook}) implies that 
the group $\F_d/C$ (is a free product amalgamated over a finite subgroup, and therefore) is non-amenable.
Thus, by Kesten's Amenability Criterion (see \eg \cite[Chapter 6]{LyonsPeres} or \cite[Chapter 7]{GaborBook}),
we know that there exists $\eps>0$ such that for all $t$ and all $x,y \in G$ we have 
$$ \Pr_{Cx} [ C X_t = Cy ] \leq e^{-\eps t}  $$
where $(X_t)_t$ is the $\mu$-random walk.
Thus, by Borell-Cantelli, we have that 
$$ \liminf_{t \to \infty} \frac1t |C X_t| > 0 \qquad a.s. $$

The proof now follows exactly as in Lemma 2.1 of \cite{LPSZ}.
Since it is very short we provide it here for completeness.

We may choose $\delta>0$ and $t_0$ so that for all $t \geq t_0$ we have 
$$ \Pr_{C} [ \forall \ t \geq t_0 \ , \ |CX_t| > \delta t ] > \frac12 . $$
Let $M = \max \{ 2 \lceil \tfrac{1}{\delta} \rceil , t_0 \}$.
Then, for any $t \geq 0$,
\begin{align*}
\Pr_{Cx}  [ \forall \ s & \geq t+Mr \ , \ |CX_s| > r \ | \ |CX_t| \leq r ] 
 \geq \inf_{|Cy| \leq r } \Pr_{Cy} [ \forall \ s \geq Mr \ , \ |CX_s| > r ] \\
& \geq \Pr_C [ \forall \ s \geq Mr \ , \ |CX_s| > \delta s ]
> \frac12
\end{align*}
(because $s \geq Mr$ implies that $\delta s \geq 2r$).
The Markov property tells us that for all $k \geq 1$,
$$ \Pr_{Cx} [ \exists \ s > k Mr \ , \ |CX_s| \leq r ] < 2^{-k} , $$
implying that 
$$ \gr_{C} (Cx, B_{\F_d/C}(C,r) ) \leq 2 M r . $$
\end{proof}

The next lemma is our final estimate before proving Lemma \ref{lem:main lem}.
It basically quantifies the fact that when $\rad(K) \geq n \geq r$, 
we can approximate $\pref_r(K X_\infty)$ by $\pref_r(K X_{T_n})$. 
Here $T_n$ is the stopping time
$$ T_n = \inf \{ t \geq 0 \ : \ |X_t| > n \} . $$
This is the first time the walk $(X_t)_t$ exists the ball of radius $n$ in $\F_d$.
The main idea behind the proof is that if $n$ is large enough, then once the walk $(X_t)_t$ 
exits the ball of radius $n$, it is very difficult for it to return to the ball of radius $r$.
However, in order to change the $r$-prefix, the walk must return to the ball of radius $r$,
so by choosing $n$ large enough we can make the probability of this event arbitrarily small.
If the random walk had finite support, this would be straightforward enough.
However, an extra technical difficulty arises when the walk can ``jump'' arbitrarily far. 
One needs to somehow control the behavior of the walk preventing it from jumping so far that it 
``passes through'' the ball of radius $r$ during the ``jump'', changing the prefix.

\begin{lemma} \label{lem:Zinfinity and ZTn}
Let $(C_j)_j$ be a non-increasing sequence of normal subgroups of $\F_d$ (so $C_{j+1} \lhd C_j \lhd \F_d$).
Assume that $\rad(C_j) \geq j$ for all $j$. 
Let $\mu$ be a symmetric, adapted measure on $\F_d$ with finite second moment,
and let $(X_t)_t$ denote the corresponding random walk.
Assume that $C_1$ is $\mu$-transient.

Then,
for any integer $r >0$ and any $\eps>0$, there exists $n_0$ 
such that for all $n \geq n_0$ and all $j > r$,
$$ \Pr \big[ \pref_r(C_j X_\infty) \neq \pref_r ( C_j X_{T_n} ) \big] < \eps . $$
\end{lemma}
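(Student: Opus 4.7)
The plan is to bound the probability of a prefix change by the expected number of times the random walk's jump-path in $\Sch(C_j)$ enters the ball $B(C_j,r)$ after time $T_n$, dominate this count uniformly in $j$ by an object depending only on $C_1$, and close via dominated convergence using $T_n \to \infty$ almost surely. Write $Z_t = C_j X_t$; if $U_{t+1} = s_1 \cdots s_{|U_{t+1}|}$ is the reduced-word decomposition of the $(t{+}1)$st jump into generators, write $V_{t,k} = X_t \cdot s_1 \cdots s_k$ for the intermediate $\F_d$-position after reading the first $k$ letters. Since $\rad(C_j) \geq j > r$, the ball $B(C_j,r)$ separates $\Sch(C_j)$ into components indexed by $\pref_r$, so any change of $\pref_r(Z_t)$ between $t = T_n$ and $t = \infty$ forces the coset-path $(C_j V_{t,k})_{k=0}^{|U_{t+1}|}$ of some jump at a time $t \geq T_n$ to meet $B(C_j,r)$. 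Combining Markov's inequality with the inclusion $C_j B_r \subseteq C_1 B_r$ (from $C_j \subseteq C_1$) therefore gives
\[
\Pr[\pref_r(Z_\infty) \neq \pref_r(Z_{T_n})] \;\leq\; \E[N_{T_n}], \qquad N_T := \sum_{t \geq T} \sum_{k=0}^{|U_{t+1}|} \1{V_{t,k} \in C_1 B_r},
\]
and the right-hand side is independent of $j$.

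The core technical step is to show $\E[N_0] < \infty$. Conditioning on the jump $u = U_{t+1}$ (independent of $X_t$),
\[
\E[N_0] = \sum_u \mu(u) \sum_{k=0}^{|u|} \sum_t \Pr\bigl[|C_1 X_t u_k| \leq r\bigr] \;\leq\; \sum_u \mu(u) \sum_{k=0}^{|u|} \sum_t \Pr\bigl[|C_1 X_t| \leq r + k\bigr],
\]
where norms are taken in $\Sch(C_1)$ and the last inequality is the triangle inequality in the Cayley graph $\F_d/C_1$. Since $C_1$ is $\mu$-transient and $\mu$ has finite second moment, Corollary \ref{cor:positive entropy} ensures $\F_d/C_1$ has exponential growth, so Lemma \ref{lem:speed} applied to the induced walk on the quotient yields $\sum_t \Pr[|C_1 X_t| \leq r+k] \leq C(r+k)^3$. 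Bounding $\sum_{k=0}^{|u|}(r+k)^3 \leq (|u|+1)(r+|u|)^3$ and integrating against $\mu$ produces a bound of order $r^3 \E|U| + \E[|U|^4]$, which is finite by the finite-fourth-moment hypothesis.

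Once $\E[N_0] < \infty$, $N_0$ is almost surely finite; the walk on $\F_d$ is transient by non-amenability, so $|X_t| \to \infty$ a.s.\ and hence $T_n \to \infty$ a.s., which forces $N_{T_n} \to 0$ a.s.\ Dominated convergence with $N_0$ as the majorant then gives $\E[N_{T_n}] \to 0$ as $n \to \infty$, and the bound is uniform in $j > r$ because $N_T$ does not depend on $j$. The main obstacle is precisely the summability step: the Cauchy--Schwarz bound \eqref{eqn:CS for Green} combined with a naive ball-volume estimate only yields a factor of $|B_r|$, which is exponential in $r$ and diverges when summed against $\mu(u)|u|$ for an infinitely-supported $\mu$. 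Passing to the quotient $\F_d/C_1$ and invoking the polynomial $O(r^3)$ bound of Lemma \ref{lem:speed} replaces this geometric bound by a polynomial one, and this replacement is exactly what forces the finite-fourth-moment assumption on $\mu$.
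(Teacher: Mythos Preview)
Your proof is correct and rests on the same core ingredients as the paper's: the Stankov-type counting of ball-crossings (Lemma~\ref{lem:Stankov}/Corollary~\ref{cor:change prefix}), the monotonicity step passing from $C_j$ to $C_1$ via \eqref{eqn:monotone for balls}, and the polynomial Green-function bound obtained by combining Corollary~\ref{cor:positive entropy} with Lemma~\ref{lem:speed}, which is precisely where the fourth-moment hypothesis enters.

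The packaging, however, is genuinely cleaner than the paper's. The paper proceeds by an explicit three-stage $\eps/3$ argument: it splits the jump $u$ into $|u| \le R$ and $|u| > R$, handles large $|u|$ with the polynomial bound \eqref{eqn:for big R}, and for small $|u|$ invokes the Cauchy--Schwarz estimate \eqref{eqn:CS for Green} together with a time cutoff $m$ and a bound on $\Pr[T_n \le m]$, choosing $R$, then $m$, then $n_0$ in sequence. You bypass all of this by observing that the single random variable $N_0$ already lies in $L^1$ (using only the polynomial bound, uniformly in $u$), and then letting dominated convergence with $T_n \to \infty$ do the work. This avoids both the small/large-$|u|$ split and the Cauchy--Schwarz step entirely. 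The paper's route has the minor advantage of being more explicit about the rate, but your argument is shorter and conceptually tidier.
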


\begin{proof}
If we denote $B_k = B_{\F_d}(1,k)$ the ball of radius $k$ in $\F_d$,
then $|B_{\F_d/C_j} (C_j , k) | \leq |B_k|$.
Averaging over the values of $T_n$, and using \eqref{eqn:CS for Green}, for any $k>0$,
\begin{align} \label{eqn:for small R}
\E \gr_{C_1}^{T_n+} (C_1 , B_{\F_d/ C_1} (C_1, k ) ) & 
\leq 2 | B_{\F_d/C_1} (C_1 , k) | \cdot \E \gr^{(T_n-1)+} (1,C_1) \nonumber \\
& \leq 2 |B_k| \cdot \E [ \gr^{(T_n-1)+} (1,C_1) \cdot (\1{ T_n-1 \geq m} + \1{T_n-1 < m} ) ] \nonumber \\
& \leq 2|B_k| \cdot \gr^{m+}_{C_1} (C_1,C_1) + 2|B_k| \cdot \Pr [ T_n \leq m ] \cdot \gr_{C_1}(C_1,C_1) .
\end{align}

Also, 
applying \eqref{eqn:monotone for balls},
and using 
Lemma \ref{lem:speed}, we can deduce that
\begin{align} \label{eqn:for big R}
\gr_{C_j}^{m+}(C_j x , B_{\F_d/C_j}(C_j , k) ) \leq \gr_{C_1}^{m+} (C_1 x, B_{\F_d/C_1}(C_1, k) ) \leq M \cdot k ,
\end{align}
for some constant $M>0$ depending only on $\F_d, \mu , C_1$,
and all $m,k>0$.

Now, fix $r>0$ and $\eps>0$. 
Choose $R>0$ large enough so that 
$$ 2|B_r| \cdot \sum_{|u| > R} \mu(u) |u| \cdot M (r+|u|) < \frac{\eps}{3} , $$
using the fact that $\mu$ has finite second moment.
We know that $C_1$ is $\mu$-transient.
Hence, we can choose $m$ large enough so that 
$$ 2|B_r| \cdot R \cdot |B_{r+R}| \cdot \gr^{m+}_{C_1} (C_1, C_1) < \frac{\eps}{3} . $$ 
Finally, since
$$ \Pr [ T_n \leq m ] \leq \Pr [ \exists \ k \leq m \ , \ |X_{k-1}^{-1} X_k | > \tfrac{n}{m} ]
\leq \frac{ m^2  \E|X_1| }{n } , $$
we may choose $n_0$ large enough so that for all $n \geq n_0$ we have 
$$ 2 |B_r| \cdot R \cdot |B_{r+R}| \cdot \Pr [  T_n \leq m ] \cdot \gr_{C_1}(C_1,C_1) < \frac{\eps}{3} . $$
Combining all of these into \eqref{eqn:for small R} and \eqref{eqn:for big R}, we conclude
\begin{align} \label{eqn:eps bound}
2 |B_r| \cdot &  \sum_{u \in \F_d} \mu(u) |u| \cdot \E \gr_{C_j}^{T_n+} (C_j , B_{\F_d/C_j} (C_j, r+|u| ) ) 
\nonumber \\
& \leq 2|B_r| \cdot R \cdot \E \gr_{C_1}^{T_n+} (C_1 , B_{\F_d/C_1} (C_1, r+R) )
+ 2|B_r| \cdot \sum_{|u| > R} \mu(u) |u| \cdot M (r+|u|)  < \eps .
\end{align}

Now, let $j>r$.
Note that if $\pref_r(C_j X_\infty) \neq \pref_r(C_j X_{T_n})$, then there must exist $t \geq T_n$
such that $\pref_r(C_j X_t) \neq \pref_r(C_j X_{t+1})$.
Corollary \ref{cor:change prefix} tells us that conditional on $T_n=m$,
\begin{align} \label{eqn:pref change after Tn}
\Pr [ \exists \ t \geq T_n  & \ , \  \pref_r(C_j X_t) \neq \pref_r(C_j X_{t+1}) \ | \ \Ff_{T_n} \ , \ T_n= m ] 
\nonumber \\
& \leq |B_{\F_d/C_j}(C_j,r)| \cdot \E \big[ |X_1| \cdot \gr_{C_j}^{m+} (C_j , B_{\F_d/C_j}(C_j , r+|X_1|) ) \big] 
\nonumber \\
& \leq |B_r| \cdot \sum_{u \in \F_d} \mu(u) |u| \gr_{C_1}^{m+} ( C_1 , B_{\F_d/C_1}(C_1, r+|u| ) ) .
\end{align}
Averaging over $T_n$, and plugging this into \eqref{eqn:eps bound}, we conclude that
for any $n \geq n_0$ and $j > r$, 
$$ \Pr [ \pref_r(C_j X_\infty) \neq \pref_r(C_j X_{T_n}) ] < \eps . $$
\end{proof}

We culminate this section with the proof of Lemma \ref{lem:main lem}.

\begin{proof}[Proof of Lemma \ref{lem:main lem}]
Let $(X_t)_t$ denote the $\mu$-random walk on $\F_d$.
Recall that 
$$ \mu^t(C_1 x) = \Pr [ C_1 X_t = C_1 x] = \Pr [ X_t \in C_1 x ] . $$
A simple Cauchy-Schwarz argument shows that $\mu^{t} (C_1 x) \leq \sqrt{ \mu^{2t}(C_1) }$,
which tends to $0$ as $t \to \infty$ (because $\F_d / C_1$ is infinite and $\mu$ is adapted).
It will be convenient below to have $\sup_x  \mu^t(C_1 x) < e^{-1}$,
so fix once and for all some large enough $t>0$ so that this holds.

Let $\Tt$ be the tail $\sigma$-algebra of $(X_t)_t$ and let 
$\Tt_j$ be the tail $\sigma$-algebra of the process $(C_j X_t)_t$.
By \cite{KV83} we know that
\begin{align*}
t \cdot h(\F_d, \mu) & = H(X_t) - H(X_t \ | \ \Tt) \\
t \cdot h(\F_d/C_j , \bar \mu) & = H(C_j X_t) - H(C_j X_t \ | \ \Tt_j ) .
\end{align*}
We have already seen that $h(\F_d/C_j , \bar \mu) \leq h(\F_d , \mu)$,
so we only need to bound the limit in the other direction.

We begin with an upper bound on $H(C_j X_t \ | \ \Tt_j )$.
Note that $\pref_r(C_j X_\infty)$ is measurable with respect to $\Tt_j$. 
Since $C_j X_t$ is a function of $X_t$, we have
\begin{align*}
H(C_j X_t \ | \ \Tt_j) & \leq H(C_j X_t \ | \ \pref_r(C_j X_\infty) ) \leq H(X_t \ |  \ \pref_r(C_j X_\infty) ) \\
& \leq H(X_t \ | \ \pref_r(X_\infty) ) ) + H( \pref_r(X_\infty) \ | \ \pref_r(C_j X_\infty) ) .
\end{align*}

Fix some $\eps>0$.
Using Proposition \ref{prop:prefix gives boundary} (with the random walk induced by $\mu^t$),
choose $r_0$ be large enough so that for all $r>r_0$ we have
$H(X_t \ | \ \Tt) \leq H(X_t \ | \ \pref_r(X_\infty) ) + \eps$.
Then, for all $r>r_0$,
\begin{align*} 
H(C_j X_t \ | \ \Tt_j) & \leq H(X_t \ | \ \Tt) + H( \pref_r(X_\infty) \ | \ \pref_r(C_j X_\infty) ) +\eps .
\end{align*}

We move to bound $H( \pref_r(X_\infty) \ | \ \pref_r(C_j X_\infty) )$.
Recall the stopping time $T_n = \inf \{ s \ : \ |X_s| > n \}$.
Then we can write:
$$ H( \pref_r(X_\infty) \ | \ \pref_r(C_j X_\infty) ) \leq \alpha + \beta_n + \gamma_n , $$
where
\begin{align*}
\alpha & = H( \pref_r(X_\infty) \ | \ \pref_r(X_{T_n} ) ) \\
\beta_n & = H( \pref_r(C_{n^3} X_{T_n}) \ | \ \pref_r(C_{n^3} X_{\infty} ) ) \\
\gamma_n & = H( \pref_r(X_{T_n}) \ | \ \pref_r(C_{n^3} X_{T_n} ) ) .
\end{align*}
Now, note that $\pref_r$ always takes values in $B_r = B_{\F_d}(1,r)$, 
the ball of radius $r$ about the unit element in $\F_d$,
which is specifically a finite set.
Fano's inequality (see \eg \cite{CoverThomas}) tells us that for random variables $X,Y$
taking values in a finite set $F$,
$$ H(X \ | \ Y) \leq H(p,1-p) + p \log |F| \qquad \textrm{ where } p = \Pr [ X \neq Y ] . $$
(Recall that $H(p,1-p) = - p \log p - (1-p) \log (1-p)$.)
Since $H(p,1-p) \to 0$ as $p \to 0$, for our purposes it will suffice to bound 
\begin{align*}
\alpha' & = \Pr [ \pref_r(X_\infty) \neq \pref_r(X_{T_n} ) ] \\
\beta_n' & = \Pr [ \pref_r(C_{n^3} X_{T_n}) \neq \pref_r(C_{n^3} X_{\infty} ) ] \\
\gamma_n' & = \Pr [ \pref_r(X_{T_n}) \neq \pref_r(C_{n^3} X_{T_n} ) ] .
\end{align*}
Lemma \ref{lem:Zinfinity and ZTn} tells us that for any $r>0$ there exists $n_0$ such that for all $n>n_0$
and $j>r$ we have that $\alpha' < \frac{\eps}{ \log |B_r| }$ and $\beta_{n}' < \frac{\eps}{ \log |B_r| }$.
By Lemma \ref{lem:large radius for prefix}, we may choose $n_0$ large enough
so that as long as $n > n_0$ we have that $\gamma_n' < \frac{\eps}{ \log |B_r| }$.
We obtain that for $\eta \in \{\alpha , \beta_n , \gamma_n \}$ we have
$$ \eta < H(\eta' , 1-\eta') + \eta' \cdot \log |B_r| < H(\eps , 1-\eps) + \eps . $$
We conclude:
For any $\eps>0$ 
there exists $j_0$, such that for all $j>j_0$ we have 
$$ H(C_j X_t \ | \ \Tt_j) \leq H(X_t \ | \ \Tt) + 3 H(\eps,1-\eps) + 4\eps . $$
This concludes the upper bound on $H(C_j X_t \ | \ \Tt_j )$.

So we are only left with proving that  $\lim_{j \to \infty} H(C_j X_t) = H(X_t)$.
To this end, given $\eps>0$, choose $j_0$ large enough so that 
$$ - \sum_{|u| > j_0 } \mu^t(u) \log \mu^t(u) < \eps , $$
which can be done since $H(X_t) < \infty$.
Since $C_j \lhd C_1$ we have that $\mu^t(x) \leq \mu^t(C_j x) \leq \mu^t(C_1 x)  <  e^{-1}$,
by our initial assumption on $t$.  
The function $\xi \mapsto - \xi \log \xi$ is increasing as long as $\xi < e^{-1}$.
So $- \mu^t(C_j x) \log \mu^t(C_j x) \geq -\mu^t(x) \log \mu^t(x)$.
Also, since $\rad(C_j) \geq j$, we know that if $|C_j x| \leq j$ then $C_j x = \{x\}$ for $|x| \leq j$.
Hence, we may bound:
\begin{align*}
H(C_j X_t) & = - \sum_{C_j x \in \F_d/C_j } \mu^t(C_j x) \log \mu^t(C_j x) \geq
- \sum_{\substack{ |C_j x| \leq j \\ C_j x \in \F_d/C_j } } \mu^t(C_j x) \log \mu^t(C_j x)  \\
& \geq - \sum_{|x| \leq j} \mu^t (x) \log \mu^t(x) > H(X_t) - \eps ,
\end{align*}
where the last inequality holds as long as $j > j_0$.
\end{proof}

%
%
%

\end{document}